\newtheorem{thh}{Theorem}[section]
\newtheorem{cor}[thh]{Corollary}
\newtheorem{lem}[thh]{Lemma}
\newtheorem{prop}[thh]{Proposition}
\def\sep{\;:\;}
\def\proof#1. {\par
                      \ifdim\lastskip<15pt
                      \removelastskip\penalty-200
                      \vskip15pt plus3pt minus3pt
                      \fi
                       {\def\a{#1}
                       \ifx\a\empty
                       {\noindent\bf Proof.}
                       \else
                       {\noindent\bf Proof of #1.}
                       \fi}\enspace}
\def\restr#1{\,\vrule\,\lower1.75ex\hbox{{\scriptsize $ #1$}}}
\def\endproof{\hfill\hspace{-6pt}\rule[-14pt]{6pt}{6pt}
\vskip22pt plus3pt minus 3pt}
\def\be{\begin{equation}}
\def\ee{\end{equation}}
\def\bea{\begin{eqnarray}}
\def\eea{\end{eqnarray}}
\def\bean{\begin{eqnarray*}}
\def\eean{\end{eqnarray*}}
\def\a{\alpha}
\def\b{\beta}
\def\d{\delta}
\def\D{\Delta}
\def\e{\varepsilon}
\def\g{\gamma}
\def\G{\Gamma}
\def\i{\infty}
\def\k{\kappa}
\def\l{\lambda}
\def\o{\omega}
\def\O{\Omega}
\def\r{\rho}
\def\s{\sigma}
\def\t{\theta}
\def\z{\zeta}
\def\c{{\rm cap}}
\def\setm{\setminus}
\def\ov{\overline}
\def\c{{\rm cap}}
\font\tenopen = cmbx10
\font\sevenopen = cmbx7
\font\fiveopen = cmbx5
\def\C{{\mathbb{C}}}
\newcommand{\sect}[1]{\section{#1}\setcounter{equation}{0}}
\title{Christoffel functions with power type weights
\footnote{AMS Classification:  26C05, 31A99, 41A10, 42C05. Key
words: Christoffel functions, asymptotics, power type
weights, Jordan curves and arcs, Bessel functions, fast
decreasing polynomials, equilibrium measures, Green's functions}
}
\author{Tivadar Danka\thanks{Supported by ERC Advanced Grant No. 267055}
\and Vilmos Totik\thanks{Supported by NSF DMS-1265375}}
\begin{document}
\maketitle
\begin{abstract}
Precise asymptotics for Christoffel functions are established
for power type weights on unions of Jordan curves and arcs.
The asymptotics involve the equilibrium measure of the support
of the measure. The result at the endpoints of arc components
is obtained from the corresponding asymptotics for internal points
with respect to a different power weight. On curve components
the asymptotic formula is proved via a sharp form of Hilbert's lemniscate
theorem while taking polynomial inverse images.
The situation is completely different on
the arc components, where the local asymptotics is obtained
via a discretization of the equilibrium measure
with respect to the zeros of an associated Bessel function.
The proofs are potential theoretical, and fast decreasing
polynomials play an essential role in them.
\end{abstract}
\tableofcontents
\sect{Introduction}
Christoffel functions have been the subject of many papers,
see e.g. \cite{Nevai}, \cite{Nevai1}, \cite{SimonCD}, and the extended
reference lists there. They are
intimately connected with orthogonal polynomials, reproducing
kernels, spectral properties of Jacobi matrices, convergence of
orthogonal expansion and even to random matrices,
see \cite{Grenander}, \cite{Nevai1} and \cite{SimonCD}
for their various connections and applications. The possible
applications are growing, for example  recently
a new domain recovery technique has been devised that
use the asymptotic behavior of
 Christoffel functions, see \cite{Putinar}; and
in the last 4-5 years
several important methods for proving universality in random matrix theory
were based on them, see \cite{Avila}, \cite{Lubinsky0}, \cite{Lubinsky1}
and \cite{Lubinsky2}. The aim of the present paper
is to complete, to a certain extent, the investigations
concerning their asymptotic behavior on Jordan curves and arcs.

Let $\mu$ be a finite Borel measure  on the plane
such that its support is compact  and
consists of infinitely many points.
The Christoffel functions
associated with $\mu$ are defined
as
\be \l_n(\mu,z_0)=\inf_{P_n(z_0)=1}\int|P_n|^2d\mu,\label{infl}\ee
where the infimum is taken for all polynomials
of degree at most $n$ that take the value 1 at $z$.
If $p_k(z)=p_k(\mu,z)$ denote the orthonormal polynomials
with respect to $\mu$, i.e.
\[\int p_n\ov{p_m}d\mu=\d_{n,m},\]
then $\l_n$ can be expressed as
\[\l_n^{-1}(\mu,z)=\sum_{k=0}^n|p_k(z)|^2.\]
In other words, $\l^{-1}(\mu,z)$ is the diagonal of the
reproducing kernel
\[K_n(z,w)=\sum_{k=0}^np_k(z)\ov{p_k(w)}\]
which makes it an essential tool in many problems.
It is easy to see that, with this reproducing kernel, the infimum in (\ref{infl})
is attained (only) for
\[P_n(z)=\frac{K_n(z,z_0)}{K_n(z_0,z_0)},\]
see e.g. \cite[Theorem 3.1.3]{Szego}).

The earliest asymptotics for Christoffel functions for measures
on the unit circle or on $[-1,1]$ go back to Szeg\H{o}, see
\cite[Th. I', p. 461]{Szegocollected}.
He gave their behavior outside the support of the measure,
and for some special cases he also found their
behavior at points of $(-1,1)$. The first
result for a Jordan arc (a circular arc) was given
in \cite{Golinskii}.
By now the asymptotic behavior of Christoffel functions for measures defined on
unions of Jordan curves and arcs $\G$ is well understood: under certain assumptions
 we have
for points $z\in \G$ that are different from the endpoints of
the arc components
of $\G$
\be \lim_{n\to\i}\l_n(\mu,z_0)=\frac{w(z_0)}{\o_\G(z_0)},\label{00}\ee
where $w$ is the density of $\mu$ with respect to the arc measure $s_\G$ on
$\G$, and $\o_\G$ is the density of the equilibrium measure
(see below) with respect to $s_\G$. For the most
general results see \cite{Totiktrans} and \cite{Totikarc}.

What is left, is to decide the asymptotic behavior
 at the endpoints of the arc components.
It turns out that this problem is closely related to
the asymptotic behavior away from the endpoints, but
for measures of the form $d\mu(x)=|z-z_0|^\a ds_\G(z)$, $\a>-1$,
and the aim of this paper is to find these asymptotic behaviors.
When $\mu$ is of the just specified form, then we shall show
(for the exact formulation see the next section),
\be
    \lim_{n \to \infty} n^{1+\alpha} \lambda_n(\mu, z_0) =  \frac{1}{(\pi \omega_{\Gamma}(z_0))^{\alpha + 1}} 2^{\alpha + 1}\Gamma \Big( \frac{\alpha + 1}{2} \Big) \Gamma \Big( \frac{\alpha + 3}{2} \Big)\label{s1}\ee
when $z_0$ is not the endpoint of an arc component of $\G$, while
at an endpoint
\[ \lim_{n \to \infty} n^{2\alpha + 2} \lambda_n(\mu,z_0) =  \frac{\Gamma(\alpha + 1)\Gamma(\alpha + 2)}{(\pi M(\G,z_0))^{2\alpha + 2}},\]
where $M(\G,z_0)$ is the limit of $\sqrt{|z-z_0|}\o_\G(z)$ as $z\to z_0$
along $\G$.

This paper uses some basic notions and results from potential theory.
See \cite{Gardiner}, \cite{Garnett}, \cite{Ransford} or
\cite{StahlTotik} for all the concepts we use and for the basic theory.
In particular, $\nu_\G$ will denote the equilibrium measure of
the compact set $\G$.

Since the asymptotics reflect the support of the measure,
in all such questions a global condition, stating that
the measure is not too small on any part of $\G$, is needed
(for example, if $\mu$ zero on any arc of $\G$, then
(\ref{s1}) does not hold any more). This global condition
is the regularity condition from \cite{StahlTotik}:
we say that $\mu$, with support $\G$,  belongs to the {\bf Reg}
class if
\[ \sup_{P_n}\left(\frac{\|P_n\|_\G}{\|P_n\|_{L^2(\mu)}}\right)^{1/n}\to 1\]
as $n\to\i$, where the supremum is taken for all polynomials
of degree at most $n$, and where $\|P_n\|_\G$
denotes the supremum norm on $\G$. The condition says that in the $n$-th root
sense the $L^\i(\mu)$ and $L^2(\mu)$-norms are almost the same.
The assumption $\mu\in {\bf Reg}$ is a very weak condition -- see \cite{StahlTotik}
for several reformulations as well as conditions on the measure
$\mu$ that implies $\mu\in {\bf Reg}$. For example,
if $\G$ consists of rectifiable Jordan curves and arcs
with arc measure $s_\G$,
then any measure $d\mu(z)=w(z)ds_\G(z)$ with $w(z)>0$ $s_\G$-almost
everywhere is regular in this sense.

Actually, it is not even needed that the support $\G$ of the measure
$\mu$ be a system of
Jordan curves or arcs, the main theorem
below holds for any $\G$ that is a finite union of
continua (connected compact sets). However, it is needed that
$z_0$ lies on a smooth arc $J$ of the outer boundary of $\G$:
the outer boundary of $\G$ is the boundary of the unbounded
connected component of $\ov\C\setm \G$. It is known that
the equilibrium measure $\nu_\G$ lives on the outer boundary, and
if $J$ is a smooth (say $C^1$-smooth)  arc on the outer boundary,
then on $J$ the equilibrium measure is absolutely continuous
with respect to the arc measure $s_J$ on $J$:
$d\nu_\G(z)=\o_\G(z)ds_J(z)$. We call this $\o_\G$ the equilibrium
density of $\G$.

The following theorem describes the asymptotics of the
Christoffel function at points that are different from
the endpoints of the arc-components/parts of $\G$,
see Figure \ref{fig-1} for illustration.

\begin{figure}
\centering
\includegraphics[scale=0.7]{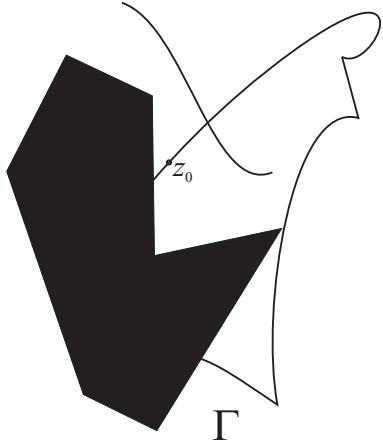}
\caption{\protect\label{fig-1} A typical position where Theorem
\ref{thmain} can be applied}
\end{figure}

\begin{thh}\label{thmain}
      Let the support $ \Gamma $ of a measure $\mu\in{\bf Reg}$ consist of finitely many continua,
    and let $ z_0$ lie on the outer boundary of $\G$.
     Assume that the intersection of $ \Gamma $ with a neighborhood
    of $z_0$ is $ C^2 $-smooth arc $J$ which contains $z_0$ in its
   (one-dimensional) interior. Assume also that in this neighborhood
     $ d\mu(z) = w(z)|z-z_0|^\alpha ds_{J}(z) $,
    where $ w $ is a strictly positive continuous function and $ \alpha > -1 $. Then
\begin{equation}\label{main_theorem_equation}
    \lim_{n \to \infty} n^{1+\alpha} \lambda_n(\mu, z_0) =  \frac{w(z_0)}{(\pi \omega_{\Gamma}(z_0))^{\alpha + 1}} 2^{\alpha + 1}\Gamma \Big( \frac{\alpha + 1}{2} \Big) \Gamma \Big( \frac{\alpha + 3}{2} \Big).
\end{equation}
\end{thh}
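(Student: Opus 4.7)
I would establish the matching bounds
\[ \limsup_{n\to\infty} n^{1+\alpha}\lambda_n(\mu,z_0) \le C \le \liminf_{n\to\infty} n^{1+\alpha}\lambda_n(\mu,z_0), \]
where $C$ denotes the right-hand side of (\ref{main_theorem_equation}). The first task is localization: fast decreasing polynomials $S_n$ with $S_n(z_0)=1$ and $|S_n(z)|\le e^{-n^c}$ outside any prescribed neighborhood $V$ of $z_0$ allow one to replace $\mu$ by $\mu|_V$ without affecting the leading asymptotics. Inside $V$, the set $\G$ coincides with the $C^2$-smooth arc $J$ and $d\mu = w(z)|z-z_0|^\alpha\, ds_J$ with $w$ close to $w(z_0)$, so the entire problem is essentially local.

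\textbf{Upper bound via polynomial inverse image.} The model problems---the weight $|t|^\alpha$ on $[-1,1]$ near $t=0$, and the corresponding weight $|e^{i\theta}-1|^\alpha$ on the unit circle near $\theta=0$---admit classical Christoffel asymptotics whose constants reproduce (\ref{main_theorem_equation}) with $\o_\G(z_0)$ replaced by the explicit model equilibrium density ($1/(\pi\sqrt{1-t^2})$ and $1/(2\pi)$ respectively). To transfer these to a general $\G$, I would invoke a sharp form of Hilbert's lemniscate theorem: there exists a polynomial $T_N$ such that the lemniscate $\{|T_N|=1\}$ (curve case) or $T_N^{-1}([-1,1])$ (arc case) matches $\G$ to high order near $z_0$. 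Composing an extremal model polynomial of degree $m\approx n/N$ with $T_N$ produces a polynomial of degree at most $n$ on $\G$ whose $L^2(\mu)$-norm realizes the target asymptotic; the key identity is that $|T_N'(z_0)|$ is, in the limit $N\to\infty$, proportional to $\pi\o_\G(z_0)$, which via the change of variables $t=T_N(z)$ produces exactly the factor $(\pi\o_\G(z_0))^{-\alpha-1}$.

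\textbf{Lower bound and main obstacle.} For the lower bound, one must show that every polynomial $P_n$ with $P_n(z_0)=1$ satisfies $\int|P_n|^2\,d\mu \ge Cn^{-(1+\alpha)}(1-o(1))$. Regularity $\mu\in{\bf Reg}$, combined with Bernstein--Markov and Nikolskii type estimates, prevents $P_n$ from hiding its $L^2$ mass away from $z_0$; after multiplying by a suitable fast decreasing polynomial one reduces to polynomials concentrated near $z_0$, and pulling these through $T_N$ reduces the task to the matching lower bound for the model Christoffel function. I expect the principal obstacle to be the arc-component case: there the local model is governed not by the sine kernel but by a Bessel kernel of parameter $\alpha$, and obtaining the explicit $\Gamma$-function constants requires a discretization of $\nu_\G$ along the zeros of the Bessel function $J_\alpha$ at scale $1/n$ together with the construction of explicit polynomial factors whose $L^2(\mu)$-behavior saturates the Bessel kernel bound. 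Secondarily, in the curve case one must carefully track $|T_N'(z_0)|$ and Jacobian factors through the polynomial substitution while balancing the growth of $N$ against the accuracy of the lemniscate approximation at $z_0$.
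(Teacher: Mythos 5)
Your outline captures several correct ingredients (model cases, sharp Hilbert lemniscate theorem, fast decreasing polynomials for localization, Bessel kernels for the arc endpoint behavior), but it contains one misconception that the paper's argument is explicitly built to avoid, and it misassigns the role of the Bessel discretization.

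\textbf{Polynomial inverse images do not handle arcs.} You propose to treat the arc case via $T_N^{-1}([-1,1])$, i.e. a polynomial preimage of an interval, as the analogue of the lemniscate $\{|T_N|=1\}$ used for curves. The paper states explicitly that the polynomial inverse image method of \cite{T2} cannot be applied when $\G$ has a Jordan-arc component near $z_0$, and the entire machinery of Section 7 exists precisely to get around this. For curves the argument goes lemniscate $\to$ Jordan curve by Hilbert-lemniscate approximation, but there is no analogous ``touching at $z_0$'' approximation by $T_N^{-1}([-1,1])$ that yields the needed inequalities on the equilibrium density. So your upper-bound route fails on arcs, which is precisely where the theorem is hardest.

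\textbf{Bessel discretization is the upper bound, not the lower bound, for arcs.} You place the Bessel-zero discretization under ``lower bound and main obstacle,'' but constructing ``explicit polynomial factors whose $L^2(\mu)$-behavior saturates the Bessel kernel bound'' is by definition the upper estimate: it is the production of a near-optimal test polynomial. In the paper this is the polynomial ${\cal C}_n$ of (\ref{cndef}), whose zeros near $z_0$ are chosen at points $a_k$ with $\nu_\G(\overline{0a_k})=j_{\b,k}/\pi n$ where $j_{\b,k}$ are the zeros of $J_\b$ with $\b=(\a+1)/2$ (not order $\a$), and whose zeros away from $z_0$ discretize $\nu_\G$ by centers of mass; this yields (\ref{veg2}). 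The lower bound on arcs is obtained by an entirely different route: a contradiction argument closing each arc into a Jordan curve system $\G^*$, multiplying the hypothetically-too-good $Q_n$ by $T_N^m$, estimating via the Green's function normal derivatives (\ref{og0})--(\ref{og1}), and invoking the already-established curve case (Proposition \ref{thmainprop}). Your sketch of ``pulling through $T_N$'' does not reproduce this.

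\textbf{Missing final step.} The theorem allows $\G$ to be a general finite union of continua with only local smoothness near $z_0$. After settling the smooth curve-and-arc case (Proposition \ref{thmainprop*}), the paper still needs the approximation Propositions \ref{propt1}--\ref{propt2}, constructing a sequence $\G_m$ of curve/arc systems with controlled Green's functions and equilibrium densities, to transfer the result to general $\G$. Your proposal does not address this reduction, and it is not a formality: it is where the regularity hypothesis $\mu\in{\bf Reg}$ and the Bernstein--Walsh lemma do real work.

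In short, the curve-case portion of your plan is essentially the paper's, but the arc case as you describe it would not close: the upper bound needs the Bessel-zero discretization of $\nu_\G$ (a genuinely different mechanism, not a polynomial change of variable), and the lower bound needs the closing-the-arc contradiction against the curve case, not a direct reduction.
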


The second main theorem of this work is about the behavior of the Christoffel
function at an endpoint, see Figure \ref{fig-2}. If $z_0$ is an endpoint of a smooth arc $J$
on the outer boundary of $\G$, then at $z_0$ the equilibrium
density has a $1/\sqrt{|z-z_0|}$ behavior (see the proof
of Theorem \ref{thmainarc}), and we set
\be M(\G,z_0):=\lim_{z\to z_0,\ z\in \G} \sqrt{|z-z_0|}\o_\G(z).\label{mgdef}\ee

\begin{thh}\label{thmainarc} Let $\G$ and $\mu$ be as in Theorem \ref{thmain},
but now assume that the  intersection of $ \Gamma $ with a neighborhood
    of $z_0$ is $ C^2 $-smooth Jordan arc $J$ with one endpoint at $z_0$.
     Then
\be      \lim_{n \to \infty} n^{2\alpha + 2} \lambda_n(\mu,z_0) =  \frac{w(z_0)}{(\pi M(\G,z_0))^{2\alpha + 2}}\Gamma(\alpha + 1)\Gamma(\alpha + 2).\label{arceq}\ee
\end{thh}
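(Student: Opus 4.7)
The plan is to reduce Theorem \ref{thmainarc} to Theorem \ref{thmain} by a squaring transformation that sends the endpoint $z_0$ of $J$ to an interior point of a $C^2$-smooth arc, while simultaneously converting the weight $|z-z_0|^\alpha$ into a weight of exponent $\beta := 2\alpha+1$. The decay rates then match automatically, since $1+\beta = 2\alpha+2$; the nontrivial content is that the constants also line up. In local coordinates around $z_0$, one takes the polynomial $T(u) = z_0 + u^2$, sets $\tilde\Gamma := T^{-1}(\Gamma)$ and defines $\tilde\mu$ as the pullback of $\mu$ by $T$. Then $u = 0$ lies in the one-dimensional interior of the lifted $C^2$-smooth (symmetric) arc $\tilde J$, the measure $\tilde\mu$ is invariant under $u \mapsto -u$, and a direct change of variables gives
\[
d\tilde\mu(u) = 2|u|^{2\alpha+1}\bigl(w(z_0) + o(1)\bigr)\,ds_{\tilde J}(u)
\]
near $u = 0$, i.e.\ a power weight of exponent $\beta$ at an interior point.

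Two clean identities then close the loop. First, since $T$ is $2$-to-$1$ with branch point at $u = 0$, the equilibrium densities are related by
\[
\omega_\Gamma(z) = \sum_{T(u_j)=z}\frac{\omega_{\tilde\Gamma}(u_j)}{|T'(u_j)|};
\]
as $z \to z_0$ the two preimages coalesce at $0$ and $|T'|$ vanishes, producing exactly the $1/\sqrt{|z-z_0|}$ singularity in $\omega_\Gamma$ asserted in \eqref{mgdef}, and yielding the identity $M(\Gamma,z_0) = \omega_{\tilde\Gamma}(0)$. Second, because $\tilde\mu$ is symmetric, any minimizer for $\lambda_{2n}(\tilde\mu,0)$ may be replaced by its even part (value at $0$ preserved, $L^2(\tilde\mu)$-norm not increased), so the minimum is attained at some $Q(u) = P_n(T(u))$ with $\deg P_n \le n$. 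A change of variable gives
\[
\int |P_n(T(u))|^2\,d\tilde\mu(u) = 2\int |P_n(z)|^2\,d\mu(z),
\]
whence $\lambda_{2n}(\tilde\mu,0) = 2\lambda_n(\mu,z_0)$. Applying Theorem \ref{thmain} to $\tilde\mu$ at the interior point $u=0$ with exponent $\beta = 2\alpha+1$, substituting $\omega_{\tilde\Gamma}(0) = M(\Gamma,z_0)$ and $\tilde w(0) = 2 w(z_0)$, and rescaling via $(2n)^{2\alpha+2} = 2^{2\alpha+2} n^{2\alpha+2}$, the powers of $2$ cancel exactly and one obtains the right-hand side of \eqref{arceq}.

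The main obstacle is that the squaring is intrinsically a local construction: no global polynomial $T$ realizes simultaneously the $C^2$-geometry of $J$ near $z_0$ \emph{and} the structure of $\Gamma$ away from $z_0$, and $T^{-1}(\Gamma)$ will in general carry extra components whose effect on the Christoffel function must be controlled. This will be handled by a localization argument based on fast decreasing polynomials (as the introduction anticipates): such polynomials allow one to restrict the infimum defining $\lambda_n(\mu,z_0)$ to test polynomials whose $L^2(\mu)$-mass is concentrated in an arbitrarily small neighborhood of $z_0$, after which only the local (essentially quadratic) form of the unfolding enters, and the identity $\lambda_{2n}(\tilde\mu,0) = 2\lambda_n(\mu,z_0)$ survives up to a factor $1+o(1)$. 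One also has to verify that $\tilde\mu$ inherits the $\mathbf{Reg}$ condition and that $\tilde J$ lies on the outer boundary of $\tilde\Gamma$, so that Theorem \ref{thmain} is genuinely applicable.
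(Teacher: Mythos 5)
Your plan is the paper's plan: lift via the \emph{global} polynomial $T(u)=z_0+u^2$, fold a minimizer for $\lambda_{2n}(\tilde\mu,0)$ onto its even part as in Proposition \ref{model_2} to relate the two Christoffel functions, identify $\omega_{\tilde\Gamma}(0)$ with $M(\Gamma,z_0)$, and apply Theorem \ref{thmain} with exponent $2\alpha+1$ at the interior point $0$. However, you have misdiagnosed the main difficulty. You call the squaring ``intrinsically a local construction,'' worry about the ``extra components'' of $T^{-1}(\Gamma)$, and therefore propose a localization by fast decreasing polynomials that would only give $\lambda_{2n}(\tilde\mu,0)=2\lambda_n(\mu,z_0)$ up to a $(1+o(1))$ factor. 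None of this is needed: $T$ is an honest polynomial on all of $\C$, $\tilde\Gamma=T^{-1}(\Gamma)$ is a single globally defined finite union of continua (each component of $\Gamma$ lifts to a $\pm$-symmetric pair, merging at $u=0$ for the component through $z_0$), and the symmetric pullback $\tilde\mu$ is exactly $\pm$-invariant on \emph{all} of $\tilde\Gamma$. Hence the even-part argument yields $\lambda_{2n}(\tilde\mu,0)=2\lambda_n(\mu,z_0)$ exactly for every $n$, with no error term and no fast decreasing polynomials. The extra components of $T^{-1}(\Gamma)$ are not a nuisance to be suppressed --- they constitute precisely the set to which Theorem \ref{thmain} is applied.

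Two further points. You assert the pushforward identity $T_*\nu_{\tilde\Gamma}=\nu_\Gamma$, equivalently $\omega_\Gamma(z)=\sum_{T(u_j)=z}\omega_{\tilde\Gamma}(u_j)/|T'(u_j)|$, without proof; this is where the identification $M(\Gamma,z_0)=\omega_{\tilde\Gamma}(0)$ comes from and it deserves an argument. The cleanest route is to compute the logarithmic potential of the symmetric pullback $\widetilde{\nu_\Gamma}$: for $u\in\tilde\Gamma$ one has $\int\log|u-t|\,d\widetilde{\nu_\Gamma}(t)=\tfrac12\int\log|u^2-s|\,d\nu_\Gamma(s)$, which is constant on $\tilde\Gamma$ by Frostman's theorem for $\nu_\Gamma$, and constancy of the potential characterizes the equilibrium measure among probability measures on $\tilde\Gamma$ (continuity of the potentials holds since all sets in play are finite unions of continua). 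Finally, your instinct that one must also check that $\tilde J=T^{-1}(J)$ is $C^2$-smooth and contains $0$ in its one-dimensional interior, that it lies on the outer boundary of $\tilde\Gamma$, and that $\tilde\mu\in{\bf Reg}$, is sound --- these are needed to legitimately invoke Theorem \ref{thmain} and should be stated, even if routine.
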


These results can be used, in particular, if the measure
is supported on a finite union of intervals on the real line,
in which case the quantities $\o_\G(x)$ and
$M(\G,x)$ have a rather explicit form.
Let $\G=\cup_{j=0}^{k_0}[a_{2j},a_{2j+1}]$ with disjoint $[a_{2j},a_{2j+1}]$.
Then the equilibrium density of $\G$ is (see e.g. \cite[(40), (41)]{T2}
or \cite[Lemma 4.4.1]{StahlTotik})
\begin{equation}
\omega_{\G}(x)=\frac{\prod_{j=0}^{k_0-1}|x-
\lambda_{j}|}{\pi\sqrt{\prod_{j=0}^{2k_0+1}|x-a_{j}|}},\qquad x\in {\rm Int}(\G),
\label{omega}
\end{equation}
where $\lambda_{j}$ are the solutions of the system of equations
\begin{equation} \int_{a_{2k+1}}^{a_{2k+2}}\frac{\prod_{j=0}^{k_0-1}(t-
\lambda_{j})}{\sqrt{\prod_{j=0}^{2k_0+1}|t-a_{j}|}}dt=0
\label{omega1},\qquad k=0,\ldots k_0-1.\end{equation}
It can be easily shown that these
$\lambda_{j}$'s  are
uniquely determined and there is one $\lambda_{j}$ on every contiguous
interval
$(a_{2j+1},a_{2j+2})$.
Now if $a$ is one of the endpoints of the intervals
of $\G$, say $a=a_{j_0}$, then
\begin{equation}
M(\G,a)=\frac{\prod_{j=0}^{k_0-1}|a-
\lambda_{j}|}{\pi\sqrt{\prod_{j=1,\ j\not=j_0}^{2k_0}|a-a_{j}|}}.
\label{finite}
\end{equation}

\begin{figure}
\centering
\includegraphics[scale=0.7]{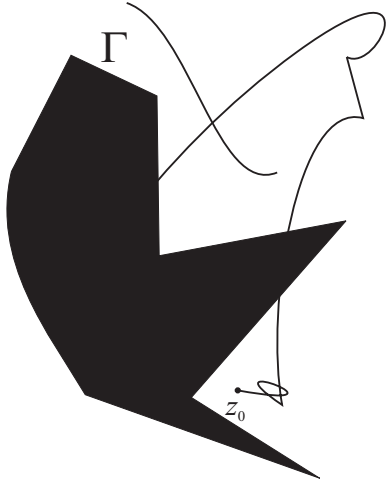}
\caption{\protect\label{fig-2} A typical position where Theorem
\ref{thmainarc} can be applied}
\end{figure}

This whole work is dedicated to proving Theorem \ref{thmain}
and Theorem \ref{thmainarc}. Actually, the latter will be
a relatively easy consequence of the former one, so the main
emphasis will be to prove Theorem \ref{thmain}.
The main line of reasoning will be the following. We start from
some known facts for simple measures like $|x|^\a dx$ on the real line,
and get some elementary results for a model case on the
unit circle via a transformation. Then we prove from these simple
cases that
Theorem
\ref{thmain} is true for lemniscate sets, i.e. level sets of polynomials.
This part will use the polynomial mapping in question to
transform the already known result to the given lemniscate.
Then we prove the theorem for finite unions of Jordan curves.
Recall that a Jordan curve is a homeomorhic image
of a circle, while a Jordan arc is a homeomorhic image of a segment.
From the point of view of finding the asymptotics of
Christoffel functions there is a
big difference between arcs and curves: Jordan curves have interior
and can be exhausted by lemniscates, so the polynomial
inverse image method of \cite{T2} is applicable for them,
while for Jordan arc that method cannot be applied.
Still, the pure Jordan curve case is used
when we go over to a $\G$ which may have arc components,
namely it is used in the lower estimate. The upper estimate
is the most difficult part of the proof; there Bessel functions
enter the picture, and a discretization technique is developed
where  the discretization
of the equilibrium measure of $\G$  is done using the zeros of appropriate
Bessel functions combined with another discretization
based on uniform distribution. Once the case of Jordan
curves and arcs have been settled, the proof
of Theorem \ref{thmain} will easily follow by
approximating a general $\G$ by a family of Jordan curves
and arcs.

\sect{Tools}\label{sectools}
In what follows, $\|\cdot\|_K$ denotes the supremum norm on a set $K$, and $s_\G$
the arc measure on $\G$ (when $\G$ consists of smooth Jordan arcs or curves).

We shall rely on some basic notions and facts from logarithmic potential theory.
See the books  \cite{Gardiner}, \cite{Garnett}, \cite{Ransford} or
\cite{SaffTotik}
for detailed discussion.

We shall often use the trivial fact that if $\mu, \nu$ are
two Borel measures, then $\mu\le \nu$ implies
 \( \lambda_n(\mu, x) \leq \lambda_n(\nu, x) \) for all $x$. It is
 also trivial that $\l_n(\mu,z)\le \mu(\C)$ (just use the identically 1 polynomial as a
 test function
 in the definition of $\l_n(\mu,z)$).

Another frequently used fact is the following: if $\{n_k\}$ is a subsequence of
the natural numbers such that $n_{k+1}/n_k\to1$ as $k\to\i$, then for any $\k>0$
\be \liminf_{n\to\i} n^\k\l_{n}(\mu,x)=\liminf_{k\to\i} n_k^\k\l_{n_k}(\mu,x),\label{liminf}\ee
and
\be \limsup_{n\to\i} n^\k\l_n(\mu,x)=\limsup_{k\to\i} n_k^\k\l_{n_k}(\mu,x).\label{limsup}\ee
In fact, since  $\l_n(\mu,x)$ is a monotone decreasing function of $n$, for $n_k\le n\le n_{k+1}$
we have
\[\left(\frac{n}{n_{k+1}}\right)^\k n_{k+1}^\k\l_{n_{k+1}}(\mu,x)\le n^\k\l_n(\mu,x)\le \left(\frac{n}{n_{k}}\right)^\k
 n_{k}^\k\l_{n_{k}}(\mu,x),\]
and both claims follow because
 $n/n_k$ and $n/n_{k+1}$ tend to 1 as $n$ (or $n_k$) tends to infinity.

\subsection{Fast decreasing polynomials}
The following lemmas on the existence of fast decreasing
polynomials will be a constant tool in the proofs.

\begin{prop}\label{fdp}
    Let $ K $ be a compact subset on $ \mathbb{C} $, $ \Omega $ the unbounded complement of $ \mathbb{C} \setminus K $ and let $ z_0 \in \partial \Omega $. Suppose that there is a disk in $ \Omega $ that contains $ z_0 $ on its boundary. Then, for every $ \gamma > 1 $, there are constants $ c_\gamma, C_\gamma $, and for every $ n \in \mathbb{N} $ polynomials $ S_{n,z_0,K} $ of degree at most $ n $ such that $ P_n(z_0) = 1 $, $ |S_{n,z_0,K}(z)| \leq 1 $ for all $ z \in K $ and
\begin{equation}\label{kineq}
    |S_{n,z_0,K}(z)| \leq C_\gamma e^{-n c_\gamma |z-z_0|^{\gamma}}, \quad z \in K.
\end{equation}
\end{prop}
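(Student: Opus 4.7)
The plan is to reduce the two-dimensional problem on $K$ to a one-dimensional problem via a Möbius transformation, apply the classical Ivanov--Totik construction of fast decreasing polynomials on an interval, and pull the result back through a polynomial identity that clears the denominator introduced by the Möbius map.

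By an affine change of coordinates we may assume $z_0=0$ and $D=B(R,R)$, the open disk of radius $R$ tangent to the imaginary axis at the origin. Then $K\subset\{z:|z-R|\ge R\}$, which is equivalent to $\operatorname{Re}z\le|z|^2/(2R)$ on $K$. Sliding the tangent disk slightly along its diameter through $z_0$, still inside $\Omega$, yields a nested tangent disk whose antipode $2R\notin K$, so the Möbius map $\varphi(z)=z/(z-2R)$ is well-defined and bounded on $K$. This map fixes $0$, sends $2R$ to $\infty$, and sends $\mathbb{C}\setminus\overline{D}$ bijectively onto the closed right half-plane $H=\{\operatorname{Re}w\ge 0\}$. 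The image $\widetilde K:=\varphi(K)$ is then a compact subset of $H$, and $|\varphi(z)|$ is comparable to $|z|$ on $K$.

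Although $\varphi$ is not a polynomial, multiplying by $(2R-z)^n$ polynomializes everything: for any polynomial $Q$ of degree at most $n$,
\[ \widetilde S(z) := (2R-z)^n\,Q(\varphi(z)) \]
is a polynomial in $z$ of degree at most $n$, since $(2R-z)^n\varphi(z)^k=(-1)^k z^k(2R-z)^{n-k}$ for $0\le k\le n$. It therefore suffices to construct $Q_n$ of degree at most $n$ with $Q_n(0)\ne 0$, $|Q_n|\le 1$ on $\widetilde K$, and $|Q_n(w)|\le C_\gamma \exp(-c_\gamma n|w|^\gamma)$ on $\widetilde K$, and then set $S_{n,z_0,K}(z)=\widetilde S(z)/\widetilde S(0)$.

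The construction of $Q_n$ is the heart of the matter. The geometric bound $\operatorname{Re}z\le|z|^2/(2R)$ on $K$ translates through $\varphi$ into $\widetilde K$ lying in a proper sector of $H$ at $0$, in which $\operatorname{Re}w$ is comparable to $|w|$. In such a sector the classical Ivanov--Totik polynomials on an interval---degree at most $m$ polynomials $p_m$ with $p_m(0)=1$ and $|p_m(x)|\le C_\gamma\exp(-c_\gamma m|x|^\gamma)$ on $[-M,M]$ for any fixed $\gamma>1$---can be combined with an auxiliary polynomial damping the imaginary direction, for instance a factor of the form $(1-(w/T)^{2k})^{m'}$ with $T,k,m'$ tuned to the size of $\widetilde K$, to yield the desired planar estimate. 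This sectorial lift is the main obstacle, and it is precisely where the tangent-disk hypothesis is used: without it the pointwise bound $|Q_n(w)|\le\exp(-cn|w|^\gamma)$ with $\gamma>1$ fails. A final minor step is to normalize so that $|S_{n,z_0,K}|\le 1$ on $K$ rather than merely bounded, which is achieved by raising to a fixed integer power and absorbing the loss into the constants $C_\gamma,c_\gamma$.
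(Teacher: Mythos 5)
The paper gives no self-contained proof of Proposition~\ref{fdp}; it refers to \cite[Theorem 4.1]{Totiktrans}, where the construction is carried out with potential-theoretic tools (Green's function estimates near $z_0$). Your proposal is an attempt at a direct proof via a M\"obius reduction, and it breaks down at the decisive step.

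The fatal error is the claim that the exterior-disk bound $\operatorname{Re}z\le |z|^2/(2R)$ on $K$ turns, under $\varphi(z)=z/(z-2R)$, into $\widetilde K=\varphi(K)$ lying in a proper sector of $H$ at $0$ (so that $\operatorname{Re}w\asymp|w|$ on $\widetilde K$). This is false. The map $\varphi$ is conformal at $0$ with $\varphi'(0)=-1/(2R)\ne 0$, so it preserves the order of tangency between $K$ and the circle $\partial D$; since $K$ may meet $\partial D$ arbitrarily close to $0$ (indeed, in every application in this paper $K$ contains a $C^2$ arc through $z_0$ tangent to the imaginary axis there), $\widetilde K$ meets the imaginary axis $\partial H$ arbitrarily close to $0$ and therefore lies in no sector $\{\operatorname{Re}w\ge c|w|\}$. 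Concretely, if $K$ contains the segment $\{iy:0\le y\le 1\}$, then $B(R,R)$ is tangent to $K$ at $0$ for every $R>0$, and $\varphi(iy)=iy/(iy-2R)$ has $\operatorname{Re}\varphi(iy)=y^2/(y^2+4R^2)=O(y^2)$ while $\operatorname{Im}\varphi(iy)=-2Ry/(y^2+4R^2)\sim -y/(2R)$, so $\operatorname{Re}\varphi(iy)/|\varphi(iy)|\to 0$. In fact what $\varphi$ actually does is send the complement of the tangent disk $D$ onto the inside of another disk tangent to the imaginary axis at $0$ (the image of $\infty$ is $1$), so the only a priori bound one gets on $\widetilde K$ is $\operatorname{Re}w\ge |w|^2/(2a)$ for some $a>0$ --- again a tangential, not sectorial, condition. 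This is the same kind of geometry you started with, so the M\"obius step does not remove the core difficulty.

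As a consequence the ``sectorial lift'' step, which is where all the real work would be, has nothing to act on. And even granting the sector, that step is too vague to evaluate: it is not explained how the Ivanov--Totik one-variable bound $|p_m(x)|\le C\exp(-cm|x|^\gamma)$ on an interval propagates to a two-dimensional sector, and the suggested auxiliary factor $(1-(w/T)^{2k})^{m'}$ in fact need not have modulus $\le 1$ on the sector or on $\widetilde K$ (for $w$ near the imaginary axis and $k$ odd the base is $1+(\operatorname{Im}w/T)^{2k}>1$). Note also that the naive estimate that one does get from the tangent-disk geometry, namely taking $Q_n(w)=(1-w/(2a))^n$ on $\widetilde K\subset\overline{B(a,a)}$, only yields $|Q_n(w)|\le\exp(-cn|w|^2)$, i.e.\ $\gamma=2$, which is strictly weaker than the required $\gamma>1$ arbitrary. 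Getting $\gamma$ close to $1$ is exactly the nontrivial part, and it requires the balayage/Green's-function analysis of \cite[Theorem 4.1]{Totiktrans}, not an off-the-shelf sector argument.
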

For details, see \cite[Theorem 4.1]{Totiktrans}. This theorem will often
be used in the following form.

\begin{cor}\label{fdp_beta}
     With the assumptions of Proposition \ref{fdp}
     for every $ 0 < \tau < 1 $, there exists constants $ c_\tau, C_\tau, \tau_0>0$
     and for every $ n \in \mathbb{N} $ a polynomial $ S_{n, z_0, K} $ of degree $ o(n) $
      such that $ S_{n,z_0,K}(z_0) = 1 $, $ |S_{n,z_0,K}(z)| \leq 1 $ for all $ z \in K $, and
\begin{equation}\label{asd}
    |S_{n, z_0, K}(z)| \leq C_\tau e^{- c_\tau n^{\tau_0}}, \quad |z-z_0| \geq n^{-\tau}.
\end{equation}
\end{cor}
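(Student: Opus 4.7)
The plan is simply to feed a sub-linear degree into Proposition \ref{fdp} and choose the exponents so that the resulting bound has the desired form.

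First, fix the auxiliary parameters. Since $0<\tau<1$, I can pick $\gamma>1$ close enough to $1$ so that $\tau\gamma<1$. Then choose any $\tau_0>0$ small enough that $\beta:=\tau_0+\tau\gamma<1$. Both choices depend only on $\tau$, so the resulting constants will depend only on $\tau$ as required.

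Next, apply Proposition \ref{fdp} with this $\gamma$, but at degree $m:=\lceil n^{\beta}\rceil$ instead of $n$. Since $\beta<1$, we have $m=o(n)$, so the polynomial $S_{m,z_0,K}$ automatically has degree $o(n)$. By Proposition \ref{fdp} it already satisfies $S_{m,z_0,K}(z_0)=1$ and $|S_{m,z_0,K}(z)|\le 1$ on $K$, so these two requirements come for free. (The statement has a typo $P_n(z_0)=1$ in Proposition \ref{fdp}, but clearly means $S_{n,z_0,K}(z_0)=1$.)

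For the decay estimate, take $z\in K$ with $|z-z_0|\ge n^{-\tau}$. Inequality (\ref{kineq}) gives
\begin{equation*}
|S_{m,z_0,K}(z)|\le C_\gamma e^{-m c_\gamma|z-z_0|^{\gamma}}\le C_\gamma\exp\!\bigl(-c_\gamma n^{\beta}\cdot n^{-\tau\gamma}\bigr)=C_\gamma e^{-c_\gamma n^{\tau_0}},
\end{equation*}
which is exactly (\ref{asd}) with $C_\tau:=C_\gamma$ and $c_\tau:=c_\gamma$. Relabeling this polynomial as $S_{n,z_0,K}$ finishes the proof.

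There is no real obstacle here; the only thing to be careful about is the bookkeeping between $\tau$, $\gamma$, and $\tau_0$ so that the exponent $\beta$ stays strictly below $1$ (ensuring $o(n)$ degree) while the product $m\cdot n^{-\tau\gamma}$ still grows like a positive power of $n$ (ensuring super-polynomial decay). The inequality $\tau<1$ is exactly what makes both requirements simultaneously satisfiable.
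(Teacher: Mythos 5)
Your proof is correct and follows essentially the same route as the paper: both apply Proposition \ref{fdp} at a sub-linear degree (you use $m=\lceil n^{\beta}\rceil$ with $\beta=\tau_0+\tau\gamma<1$; the paper uses $\lfloor n^{1-\varepsilon}\rfloor$ with $1-\varepsilon-\tau\gamma>0$, which is the same constraint under the relabeling $\varepsilon=1-\beta$) and then evaluate the resulting exponential bound on $|z-z_0|\geq n^{-\tau}$. The bookkeeping $\beta-\tau\gamma=\tau_0$ is exactly the paper's $1-\varepsilon-\tau\gamma=\tau_0$, so there is nothing to add.
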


\begin{proof}.
  Let $ 0 < \varepsilon $ be sufficiently small and select
  $\g>1$ so that
     $ 1 - \varepsilon - \tau \gamma > 0 $.
     Lemma \ref{fdp} tells us that there is a polynomial $ P_n $ with $ \deg(P_n) \leq n^{1-\varepsilon} $ such that
\[
    |P_n(z)| \leq C_\g e^{-c_\g n^{1-(\varepsilon+\tau \gamma)}}, \quad |z-z_0| \geq n^{-\tau},
\]
and this proves the claim with $S_{n,z_0,K}=P_n$.
\end{proof}

There is a version of Lemma \ref{fdp} where the
decrease is not exponentially small, but starts much earlier
than in Lemma \ref{fdp}.

\begin{prop}\label{fdpalt} Let $K$ be as in Proposition \ref{fdp}.
Then, for every $\b<1$, there are constants $c_\b,C_\b>0$, and for every
$n=1,2,\ldots$ polynomials $P_n$ of degree at most
$n$ such that $P_n(z_0)=1$, $|P_n(z)|\le 1$ for $z\in K$ and
\be |P_n(z)|\le C_\b e^{-c_\b(n|z-z_0|)^\b},\qquad z\in K.\label{sest}\ee
\end{prop}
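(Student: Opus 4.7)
The idea is to build $P_n$ as a product of fast-decreasing polynomials from Proposition \ref{fdp} spread over dyadic scales accumulating at $z_0$. Fix $\gamma>1$ (to be chosen depending on $\beta$), set the scales $r_j=2^{-j}$ for $j=0,1,\ldots,J$ and define the degrees $n_j:=\lfloor n^{\beta}\,r_j^{\beta-\gamma}\rfloor = \lfloor n^{\beta}\,2^{j(\gamma-\beta)}\rfloor$. Apply Proposition \ref{fdp} at each $j$ to obtain $Q_j$ of degree $\le n_j$ with $Q_j(z_0)=1$, $\|Q_j\|_K\le 1$ and $|Q_j(z)|\le C_\gamma e^{-c_\gamma n_j|z-z_0|^\gamma}$ on $K$. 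Take $J$ to be the largest integer with $\sum_j n_j\le n$; since the $n_j$ form a geometric sequence, this gives $J\approx \frac{1-\beta}{\gamma-\beta}\log_2 n$. Set $P_n:=\prod_{j=0}^{J}Q_j$, so that automatically $P_n(z_0)=1$, $\|P_n\|_K\le 1$ and $\deg P_n\le n$.

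\paragraph{Main estimate.}
At a point with $|z-z_0|=\eta$ (comparable to some $r_i$ with $i\le J$), multiplying the individual bounds yields
\[
|P_n(z)|\le C_\gamma^{J+1}\exp\!\Bigl(-c_\gamma\,\eta^{\gamma}\sum_{j=0}^{J} n_j\Bigr),
\]
and the design of the $n_j$ is precisely such that, after splitting the sum at the index $i$ and inserting $r_j=2^{-j}$, one checks $\sum_j n_j\,\eta^\gamma\ge c\,(n\eta)^{\beta}$ with a constant independent of $i$ (the two halves of the sum each contribute at least $(n\eta)^\beta$ up to a multiplicative constant, by a direct geometric computation). The prefactor $C_\gamma^{J+1}$ is only polynomial in $n$, while the exponential factor is $e^{-c(n\eta)^\beta}$ with $(n\eta)^\beta\to\infty$ on the covered range, so the prefactor can be absorbed into $C_\beta$ by enlarging the constants for $n$ sufficiently large (and adjusting for finitely many small $n$ by the trivial polynomial $\equiv 1$).

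\paragraph{The hard part.}
The delicate region is $|z-z_0|<r_J$, where the above product estimate only gives $|P_n|\le 1$ while the target $e^{-c(n|z-z_0|)^\beta}$ is still genuinely small. Two tools are available: (i) choose $\gamma$ close to $1$, which pushes $r_J$ down toward $1/n$ and makes $(nr_J)^\beta$ grow only like a small power of $n$; (ii) multiply $P_n$ by an additional factor coming from Corollary \ref{fdp_beta} applied with $\tau$ slightly below the exponent $(1-\beta)/(\gamma-\beta)$, which costs only $o(n)$ in degree and provides the extra $e^{-cn^{\tau_0}}$ decay needed to bridge the inner gap. I expect the verification in this inner window -- showing that these two ingredients together yield the bound $e^{-c_\beta(n|z-z_0|)^\beta}$ uniformly in $n$ -- to be the main technical obstacle, requiring a careful matching of the exponents $\gamma$, $\tau$, $\tau_0$ and of the constants to the given $\beta$.
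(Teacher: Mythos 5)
Your dyadic product $P_n=\prod_{j\le J}Q_j$ handles the region $|z-z_0|\ge c\,r_J$ correctly; the split-at-index-$i$ estimate $\eta^\gamma\sum_j n_j\ge c(n\eta)^\beta$ is right, and the prefactor $C_\gamma^{J+1}=n^{O(1)}$ can be absorbed because $(n\eta)^\beta\ge (nr_J)^\beta$ grows at least polynomially on that range. But the inner window is not a technicality to be matched away; it is a genuine obstruction to any proof that stays inside the class of building blocks you are using.

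To see this concretely, fix any $\gamma>1$ and set $\eta_n:=(\log n)^2/n$. The claimed bound requires $|P_n(z)|\le C_\beta\exp\left(-c_\beta(\log n)^{2\beta}\right)\to 0$ at that scale. On the other hand, every factor $Q$ at your disposal -- a polynomial from Proposition \ref{fdp} of degree $m\le n$ with some exponent $\gamma'>1$, and this covers the factors from Corollary \ref{fdp_beta} as well, since those are nothing but Proposition \ref{fdp} applied with degree $n^{1-\varepsilon}$ -- only comes with the bound $|Q(z)|\le\min\left(1,\ C_{\gamma'}e^{-c_{\gamma'}m\eta^{\gamma'}}\right)$, and
\[
m\,\eta_n^{\gamma'}\le n\left(\frac{(\log n)^2}{n}\right)^{\gamma'}=(\log n)^{2\gamma'}\,n^{1-\gamma'}\longrightarrow 0.
\]
Hence the exponential part of the bound exceeds $1$ for large $n$ (recall $C_{\gamma'}\ge 1$ since $Q(z_0)=1$), and everything the product -- together with any finite number of Corollary \ref{fdp_beta} factors -- can prove at $\eta_n$ is $|P_n|\le 1$, while the target tends to $0$. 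Tool (i) does not help: $r_J/n^{-1}=n^{(\gamma-1)/(\gamma-\beta)}\to\infty$ for any fixed $\gamma>1$, so $\eta_n$ is eventually strictly inside the inner window. Tool (ii) is moreover misstated: with $\tau$ below $(1-\beta)/(\gamma-\beta)$ the threshold $n^{-\tau}$ lies above $r_J$, so the extra factor decays only where the product already suffices; and correcting the sign still leaves the very same failure at $\eta_n$, because the Corollary \ref{fdp_beta} factor is again a Proposition \ref{fdp} polynomial subject to the displayed computation.

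What is missing is a construction whose decay genuinely begins at scale $1/n$ with profile $(nt)^\beta$. The paper does not supply its own proof of this proposition -- it cites Lemma 4 of \cite{Totikszego} -- and the argument there is not an iteration of Proposition \ref{fdp} but a direct discretization of a potential-theoretic density matched to the target $(nt)^\beta$, which is exactly what evades the above obstruction. No choice of $\gamma$, $\tau$, $\tau_0$ within the present scheme can close the inner gap.
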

See \cite[Lemma 4]{Totikszego}.

It will be convenient to use these results when $n>1$ is not necessarily
integer (formally one has to take the integral part
of $n$, but the estimates will hold with
possibly smaller constants in the exponents).

\subsection{Polynomial inequalities}
We shall also  need some inequalities
for polynomials that are used several times
in the rest of the paper.

We start with a Bernstein-type inequality.

\begin{lem}\label{bernstein}
    Let  $J$ be a $ C^2 $ closed Jordan arc and  $ J_1 $ a closed subarc of
    $ J $ not having common endpoint with $ J $.
    Then, for every $ D > 0 $, there is a constant $ C_D $, such that
\[
    |P_{n}^{'}(z)| \leq C_D n \| P_n \|_J, \qquad \operatorname{dist}(z,J_1) \leq D/n,
\]
holds for any polynomials $P_n$ of degree $n=1,2,\ldots$.
\end{lem}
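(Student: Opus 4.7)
The strategy is to combine the Bernstein--Walsh estimate with Cauchy's integral formula applied on a disk of radius of order $1/n$ around $z$. Let $\Omega=\widehat{\C}\setminus J$ and let $g_\Omega(\cdot,\infty)$ be its Green's function with pole at infinity.

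The decisive input is a \emph{linear} bound on the Green's function in a fixed (i.e., $n$-independent) neighborhood of $J_1$: there exist $\delta>0$ and $c>0$ such that
\[
  g_\Omega(w,\infty)\le c\,\operatorname{dist}(w,J),\qquad \operatorname{dist}(w,J_1)\le \delta.
\]
This is where the $C^2$-smoothness of $J$ and the fact that $J_1$ stays away from the endpoints of $J$ enter. On the relative interior of the $C^2$ arc $J$ the equilibrium density $\omega_J=d\nu_J/ds_J$ is continuous and finite (the $|z-a|^{-1/2}$ singularity of the density appears only at the endpoints of $J$, which $J_1$ avoids), hence $\omega_J$ is uniformly bounded on a neighborhood of the compact set $J_1$. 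Since the outward normal derivative of $g_\Omega(\cdot,\infty)$ on a smooth piece of $J$ equals $2\pi\omega_J$, the Green's function grows at most linearly with distance from $J$ on a fixed neighborhood of $J_1$.

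Granting this bound, fix $D>0$ and let $z$ satisfy $\operatorname{dist}(z,J_1)\le D/n$. For $n$ large enough that $(D+1)/n\le \delta$, the circle $\Gamma_z:=\{w:|w-z|=1/n\}$ is contained in $\{w:\operatorname{dist}(w,J_1)\le \delta\}$, and every $w\in\Gamma_z$ satisfies $\operatorname{dist}(w,J)\le (D+1)/n$. The Bernstein--Walsh inequality together with the linear bound then yields
\[
  |P_n(w)|\le \|P_n\|_J\,\exp\!\bigl(n\,g_\Omega(w,\infty)\bigr)\le \|P_n\|_J\,e^{c(D+1)},\qquad w\in\Gamma_z.
\]
Applying Cauchy's formula for the derivative on $\Gamma_z$ (which is legitimate since $P_n$ is entire), one obtains
\[
  |P_n'(z)|\le \frac{1}{1/n}\max_{w\in\Gamma_z}|P_n(w)|\le n\,\|P_n\|_J\,e^{c(D+1)},
\]
which is exactly the claim with $C_D=e^{c(D+1)}$. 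The finitely many small values of $n$ are handled by adjusting the constant.

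The only nontrivial step is the linear bound on $g_\Omega(\cdot,\infty)$ near $J_1$; everything else is immediate. A clean route is to use a local $C^2$ straightening to identify a neighborhood of a point of $J_1$ with a neighborhood of an interior point of $[-1,1]$ inside $\C\setminus[-1,1]$, where the Green's function obviously grows linearly as one leaves the slit; compactness of $J_1$ and $C^2$-smoothness allow one to patch these local models into a single estimate on a neighborhood of $J_1$. An equivalent approach is to start from the representation $g_\Omega(z,\infty)=\log(1/\operatorname{cap}(J))-U^{\nu_J}(z)$ and estimate directly using the boundedness of $\omega_J$ on a neighborhood of $J_1$.
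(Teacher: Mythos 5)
Your overall architecture is the standard one and is essentially how the result is proved in the source the paper cites (the paper itself gives no proof of Lemma \ref{bernstein}, only the reference to \cite[Corollary 7.4]{Totiktrans}): Bernstein--Walsh plus a Lipschitz bound for the Green's function near $J_1$, then Cauchy's formula on a circle of radius $1/n$. The Cauchy/Bernstein--Walsh part of your write-up is fine. The problem is that the entire content of the lemma is concentrated in the bound $g_\Omega(w,\infty)\le c\,\operatorname{dist}(w,J)$ for $w$ in a fixed neighborhood of $J_1$, and none of the three justifications you offer for it actually works as stated. First, for an arc the relation is $\frac{\partial g_\Omega}{\partial {\bf n}_+}+\frac{\partial g_\Omega}{\partial {\bf n}_-}=2\pi\omega_J$ (two one-sided normals, cf. (\ref{norm2}) in the paper), not ``the outward normal derivative equals $2\pi\omega_J$''; more importantly, pointwise boundedness of the normal derivatives on $J_1$ does not by itself give a linear bound valid in a two-dimensional neighborhood -- you need uniform control of the error in $g(w_0+d\,{\bf n})=d\,\partial_{\bf n}g(w_0)+o(d)$, i.e.\ continuity of $\nabla g$ up to the boundary near $J_1$, which is exactly the statement to be proved. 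Second, the ``local $C^2$ straightening'' comparison with $\C\setminus[-1,1]$ is not legitimate, because a $C^2$ straightening is not conformal and Green's functions are not preserved under it; the correct version of this idea maps $\Omega$ conformally onto the exterior of the unit disk and invokes Kellogg--Warschawski smoothness of that map along the two sides of the $C^{1+}$ subarc, which is a genuinely different (and nontrivial) argument.

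Third, the ``direct'' potential estimate from $g_\Omega=\log(1/\c(J))-U^{\nu_J}+U^{\nu_J}\restr{J}$ is the route that can be made to work, but it needs more than boundedness of $\omega_J$ near $J_1$: if, for the nearest point $w_0\in J$ to $w$ and $d=|w-w_0|$, you estimate crudely by $|w-t|\le d+|w_0-t|$, the local integral gives only $O(d\log(1/d))$, i.e.\ a Bernstein inequality with an extra $\log n$. To remove the logarithm you must use that $w-w_0$ is orthogonal to $J$ at $w_0$ and that, by $C^2$-smoothness, $|\langle w-w_0,\,t-w_0\rangle|\le Cd\,s^2$ where $s$ is the arclength distance from $w_0$ to $t$; then splitting the integral at $s\sim d$ (using $\int_0^d\log(Cd/s)\,ds=O(d)$ on the inner range and $\log(1+2Cd+d^2/c^2s^2)\le 2Cd+d^2/c^2s^2$ with $\int_d^\delta d^2 s^{-2}\,ds\le d$ on the outer range, plus an $O(d)$ contribution from $t$ far from $w_0$) yields the desired $g_\Omega(w)\le c\,d$. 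So the plan is right and matches the cited proof in spirit, but as written the key Green's function estimate is asserted rather than proved, and the sketch you give for it would either be circular or lose a factor $\log n$.
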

See \cite[Corollary 7.4]{Totiktrans}.

Next, we continue with a Markov-type inequality.

\begin{lem} \label{corMarkov} Let $K$ be a continuum.
If $Q_n$ is a polynomial of degree
at most $n=1,2,\ldots$, then
\be \|Q_n'\|_{K}\le \frac{e}{2\c(K)}n^2\|Q_n\|_K,\label{09}\ee
where $\c(K)$ denotes the logarithmic capacity of $K$.

In particular, if $K$ has diameter 1, then
\be \|Q_n'\|_{K}\le 2e n^2\|Q_n\|_K.\ee
\end{lem}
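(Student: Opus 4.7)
The plan is to combine the Bernstein--Walsh inequality with Cauchy's integral formula and then optimize. Let $c=\mathrm{cap}(K)>0$ and let $g_K$ be the Green's function of the unbounded component of $\ov\C\setminus K$ with pole at $\infty$, normalized so that $g_K(z)-\log|z|\to -\log c$ at infinity. By Bernstein--Walsh,
\[
|Q_n(z)|\le \|Q_n\|_K\, e^{n g_K(z)},\qquad z\in\C.
\]

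Fix $z_0\in K$ at which $|Q_n'|$ attains its maximum on $K$. For any $r>0$, Cauchy's formula on the circle $|w-z_0|=r$ yields
\[
|Q_n'(z_0)|\le \frac{1}{r}\max_{|w-z_0|=r}|Q_n(w)|\le \frac{\|Q_n\|_K}{r}\exp\!\Bigl(n\max_{|w-z_0|=r}g_K(w)\Bigr).
\]
The crucial ingredient is a pointwise estimate of $g_K$ at scale $r$ near $z_0$. Since $K$ is a continuum of capacity $c$, one compares it with the extremal (thinnest) continuum of the same capacity---a segment of length $4c$, whose Green's function is explicit via the Joukowski map. This yields a universal bound of the form $g_K(w)\le \log\bigl((2c+r+\sqrt{r(r+4c)})/(2c)\bigr)$ for $|w-z_0|=r$; for small $r$ this behaves like $\sqrt{r/c}$, which is precisely the growth that produces the $n^2$ scaling characteristic of Markov-type inequalities.

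Substituting this bound and minimizing $(1/r)\exp(n g_K(w))$ in $r$ produces the inequality: the balance point lies at $r$ of order $c/n^2$, at which the exponential contributes a bounded factor while $1/r$ contributes $n^2/c$. Carrying out the optimization using the exact form of the Green's function estimate (rather than a Taylor approximation) is what recovers the sharp constant $e/(2c)$. The \emph{in particular} statement then follows from the classical capacity--diameter inequality $\mathrm{cap}(K)\ge \mathrm{diam}(K)/4$ for continua (with equality for a segment): when $\mathrm{diam}(K)=1$ this forces $1/(2\,\mathrm{cap}(K))\le 2$, so the general bound collapses to $2en^2\|Q_n\|_K$.

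The main obstacle is the sharp pointwise estimate on $g_K$---essentially the extremality of the segment among continua of given capacity---together with a sufficiently careful $r$-optimization to retain the exact numerical constant $e/2$ instead of a larger one. Rougher Green's function bounds already suffice to yield the correct order $n^2/c$, but only an inferior constant; the technical work lies entirely in tightening this to the sharp form.
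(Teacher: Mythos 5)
The paper does not prove this lemma: inequality (\ref{09}) is quoted from Pommerenke \cite[Theorem 1]{Pommerenke}, and the ``in particular'' clause is the trivial combination of (\ref{09}) with the classical estimate $\c(K)\ge \mathrm{diam}(K)/4$ for continua \cite[Theorem 5.3.2(a)]{Ransford}. Your treatment of the second clause is correct. For the main inequality, however, your route (Bernstein--Walsh plus Cauchy on a circle plus a Green's-function bound plus optimization in $r$) has two genuine gaps.

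First, the pointwise bound $g_K(w)\le \log\bigl((2c+r+\sqrt{r(r+4c)})/(2c)\bigr)$ for $|w-z_0|=r$, $z_0\in K$, is asserted as a ``comparison with the segment'', but this extremality of the segment among continua of fixed capacity is itself a nontrivial distortion theorem; you flag it as the main obstacle yet give neither a proof nor a reference, so the argument is incomplete precisely at its crucial step.

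Second, and more seriously: even granting that Green's-function bound, the circle-contour optimization you describe does \emph{not} produce the constant $e/(2c)$. Substituting $r=2c(\cosh t-1)$ one has $\bigl(2c+r+\sqrt{r(r+4c)}\bigr)/(2c)=e^{t}$, so the quantity to be minimized is
\[
\frac{e^{nt}}{2c(\cosh t-1)}=\frac{e^{nt}}{4c\,\sinh^{2}(t/2)} .
\]
Its logarithmic derivative vanishes when $\coth(t/2)=n$, and at that point the value is $\dfrac{n^{2}-1}{4c}\Bigl(\dfrac{n+1}{n-1}\Bigr)^{n}$. This exceeds $\dfrac{e\,n^{2}}{2c}$ for every $n\ge 2$ (for $n=2$ it is $27/(4c)\approx 6.75/c$ against $2e/c\approx 5.44/c$; as $n\to\infty$ the ratio tends to $e/2>1$), and asymptotically the method yields $\dfrac{e^{2}}{4c}\,n^{2}$, a constant larger than Pommerenke's by a factor $e/2\approx 1.36$. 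So your claim that ``carrying out the optimization using the exact form of the Green's function estimate\ldots is what recovers the sharp constant $e/(2c)$'' is false: no choice of $r$ in the circle-Cauchy estimate reaches $e/2$, and the sharp constant requires a genuinely finer argument (Pommerenke's proof exploits the exterior conformal map and additional structure, not just the Bernstein--Walsh majorant and a single Cauchy circle).
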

For (\ref{09}) see \cite[Theorem 1]{Pommerenke}, and for the last statement
note that if $K$ has diameter 1, then its capacity is at least $1/4$
(\cite[Theorem 5.3.2(a)]{Ransford}).

Next, we prove a Remez-type inequality.

\begin{lem}\label{Remez}
Let $\G$ be a $C^1$ Jordan curve or arc, and assume
that for every $n=1,2,\ldots$, $J_n$ is a subarc of $\G$,
and $J_n^*$ is a subset of $J_n$ such that
\[s_\G(J_n\setm J_n^*)=o(n^{-2})s_\G(J_n),\]
where $s_\G$ denotes the
arc-length measure on $\G$.
Then, for any sequence $\{Q_n\}$of polynomials of degree at most
$n=1,2,\ldots$, we have
\be \|Q_n\|_{J_n}=(1+o(1))\|Q_n\|_{J_n^*}.\label{ineq1}\ee
\end{lem}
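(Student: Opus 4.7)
The plan is to pick a near-maximizer of $|Q_n|$ on $J_n$ and move it to a nearby point of $J_n^*$ with negligible loss by applying a Markov-type inequality to $Q_n$ on the continuum $J_n$ itself. The hypothesis $s_\Gamma(J_n \setminus J_n^*) = o(n^{-2}) \ell_n$, where I write $\ell_n := s_\Gamma(J_n)$, is tailored precisely to the $n^2$ growth in Markov's inequality on a set of logarithmic capacity comparable to $\ell_n$.

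Concretely, I would first pick $z_n^* \in J_n$ with $|Q_n(z_n^*)| = M_n := \|Q_n\|_{J_n}$. Setting $\eta_n := s_\Gamma(J_n \setminus J_n^*) = o(n^{-2} \ell_n)$, an arc-neighborhood of $z_n^*$ in $J_n$ of arc-radius $\eta_n$ cannot lie entirely outside $J_n^*$ (otherwise it would be contained in the bad set, whose total $s_\Gamma$-measure is only $\eta_n$), so there is $z_n' \in J_n^*$ joined to $z_n^*$ by a subarc of $J_n$ of arc-length at most $C\eta_n$ with an absolute constant $C$. Next, Lemma \ref{corMarkov} applied to the continuum $K = J_n$ yields
\[
    \|Q_n'\|_{J_n} \leq \frac{e\, n^2}{2\,\mathrm{cap}(J_n)} M_n,
\]
and once I have a uniform lower bound $\mathrm{cap}(J_n) \geq c\, \ell_n$, this becomes $\|Q_n'\|_{J_n} \leq C' n^2 M_n/\ell_n$. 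Integrating $Q_n'$ along the short connecting arc gives
\[
    |Q_n(z_n^*) - Q_n(z_n')| \leq C\eta_n \cdot \|Q_n'\|_{J_n} \leq \frac{C'' n^2 \eta_n}{\ell_n}\, M_n = o(1) M_n,
\]
so $\|Q_n\|_{J_n^*} \geq |Q_n(z_n')| \geq (1 - o(1)) M_n$, which together with the trivial bound $\|Q_n\|_{J_n^*} \leq \|Q_n\|_{J_n}$ yields (\ref{ineq1}).

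The main obstacle is the uniform capacity estimate $\mathrm{cap}(J_n) \geq c\, \ell_n$. By the inequality $\mathrm{cap}(K) \geq \mathrm{diam}(K)/4$ for continua (which underlies the last clause of Lemma \ref{corMarkov}), this reduces to bounding $\mathrm{diam}(J_n)$ below by a fixed multiple of $\ell_n$, uniformly over all subarcs $J_n \subseteq \Gamma$. This is where the $C^1$ hypothesis enters: parametrizing $\Gamma$ by arc length $\gamma$, uniform continuity of the unit tangent $\gamma'$ gives $\delta_0 > 0$ so that $|\gamma(s) - \gamma(t)| \geq |s-t|/2$ whenever $|s-t| \leq \delta_0$, which handles subarcs of arc-length at most $\delta_0$ directly. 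For $\ell_n > \delta_0$, any such $J_n$ contains a subarc of arc-length $\delta_0$ and hence has diameter $\geq \delta_0/2$; since $\ell_n \leq s_\Gamma(\Gamma)$ is bounded, the ratio $\mathrm{diam}(J_n)/\ell_n$ remains uniformly positive in this regime as well.
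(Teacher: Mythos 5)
Your argument is correct and is essentially the paper's proof: both rest on the Markov inequality for a continuum from Lemma \ref{corMarkov}, the uniform equivalence ${\rm diam}(J_n)\sim s_\G(J_n)$ supplied by the $C^1$ hypothesis, and the observation that the hypothesis forces a point of $J_n^*$ within arc-distance $o(n^{-2})s_\G(J_n)$ of the maximizer. The only difference is presentational: the paper rescales $J_n$ to diameter $1$ so that $\c(J_n)\ge 1/4$ can be quoted directly, whereas you keep $J_n$ as is and carry the factor $\c(J_n)\gtrsim s_\G(J_n)$ through explicitly.
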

\proof. It is clear from the $C^1$ property that
$s_\G(J_n)\sim {\rm diam} (J_n)$ uniformly in $J_n$
(meaning that the ratio of the two sides lies in between
two positive constants).

Make a linear transformation $z\to Cz$ such that, after this
transformation, the arc $\tilde J_n$ that we obtain from $J_n$
has diameter 1. Under this transformation $J_n^*$ goes into
a subset $\tilde J_n^*$ of $\tilde J_n$ for which
\be s_{\tilde J_n}(\tilde J_n\setm \tilde J_n^*)=o(n^{-2})s_{\tilde J_n}(\tilde J_n),
\label{assu}\ee
and $Q_n$ changes into a polynomial $\tilde Q_n$ of degree at
most $n$. (\ref{ineq1}) is clearly equivalent to its $\tilde{}$-version.

Let $M=\|\tilde Q_n\|_{\tilde J_n}$. By Lemma \ref{corMarkov},
the absolute value of $\tilde Q_n'$ is bounded on $\tilde J_n$ by $2e n^2M$,
hence if $z,w\in \tilde J_n$, then
\be |\tilde Q_n(z)-\tilde Q_n(w)|\le 2en^2M s_{\tilde J_n}(\ov{zw}),\label{bbb}\ee
where $\ov{zw}$ is the arc of $\tilde J_n$ lying in between $z$ and $w$.
By the assumption (\ref{assu}) for every $z\in \tilde J_n$
there is a $w\in \tilde J_n^*$ with
\[ s_{\tilde J_n}(\ov{zw})=o(n^{-2})s_{\tilde J_n}(\tilde J_n)
=o(n^{-2})\]
because $s_{\tilde J_n}(\tilde J_n)\sim {\rm diam} (\tilde J_n)=1$.
Choose here $z\in \tilde J_n$ such that $|\tilde Q_n(z)|=M$.
Since $|\tilde Q_n(w)|\le \|\tilde Q_n\|_{\tilde J_n^*}$,
we get from (\ref{bbb})
\[M=|\tilde Q_n(z)|\le \|\tilde Q_n\|_{\tilde J_n^*}
+ o(1)M,\]
and the claim follows.\endproof

We shall frequently use the following,
so called Nikolskii-type inequalities for power type weights.
In it we write that a Jordan arc is $C^{1+}$-smooth  if
there is a $\t>0$ such that the arc in question is $C^{1+\t}$-smooth.

\begin{lem}\label{Niklemma} Let $J$ be a $C^{1+}$-smooth Jordan arc and let $J^*\subset J$ be a subarc of $J$ which has no common endpoint
with $J$. Let $z_0\in J$ be a fixed point, and for $\a>-1$ define the measure
$\nu_\a$ on $J$ by $d\nu_\a(u)=|u-z_0|^\a ds_J(u)$. Then there is
a constant $C$ depending only on $\a,J$ and $J^*$ such that for any polynomials
$P_n$ of degree at most $n=1,2,\ldots$ we have
\be \|P_n\|_{J^*}\le Cn^{(1+\a)/2}\|P_n\|_{L^2(\nu_\a)},\label{nik1}\ee
if $\a \ge 0$, and
\be \|P_n\|_{J^*}\le Cn^{1/2}\|P_n\|_{L^2(\nu_\a)},\label{nik2}\ee
if $-1<\a<0$.

The same is true if $d\nu_\a(u)=w(u)|u-z_0|^\a ds_J(u)$
with some strictly positive and continuous $w$.
\end{lem}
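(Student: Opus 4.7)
The plan is to reduce the weighted Nikolskii inequality to its unweighted counterpart via a splitting of the $L^2$-integral near $z_0$. The engine is the classical unweighted Nikolskii bound on a subarc: for every subarc $\hat J$ of $J$ not sharing an endpoint with $J$,
\[\|P_n\|_{\hat J}\le C_0\sqrt{n}\,\|P_n\|_{L^2(J,ds_J)}.\]
This is the statement that $\lambda_n(s_J,z)\ge c/n$ uniformly on $\hat J$, which is a direct consequence of Christoffel function asymptotics of the form (\ref{00}) applied to $\mu=s_J$ (the density $1/\omega_J$ is continuous and strictly positive in the interior of $J$).

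The case $-1<\alpha<0$ is then immediate: since $|u-z_0|^{\alpha}w(u)\ge c_1>0$ on $J$, one has $\|P_n\|_{L^2(J,ds_J)}\le c_1^{-1/2}\|P_n\|_{L^2(\nu_\alpha)}$, and the unweighted bound above yields (\ref{nik2}).

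For $\alpha\ge 0$ I would fix a subarc $\hat J$ of $J$ containing both $J^*$ and $z_0$ in its (one-dimensional) interior, still having no common endpoint with $J$ (such $\hat J$ exists because $z_0$ and $J^*$ lie in the interior of $J$). For a parameter $r>0$ I split
\[\int_J|P_n|^2\,ds_J=\int_{|u-z_0|\ge r}|P_n|^2\,ds_J+\int_{|u-z_0|<r}|P_n|^2\,ds_J.\]
On the outer piece $|u-z_0|^\alpha\ge r^\alpha$ (this is where $\alpha\ge 0$ enters), bounding that integral by $c_0^{-1}r^{-\alpha}\|P_n\|_{L^2(\nu_\alpha)}^2$ with $c_0=\min_J w$. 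For $r$ smaller than the arc-distance from $z_0$ to $\partial\hat J$, the inner piece is an arc contained in $\hat J$ of $s_J$-length at most $3r$ (by $C^{1+}$-smoothness, chord and arc length are comparable), so its contribution is $\le 3r\,\|P_n\|_{\hat J}^2$. Feeding both estimates into the unweighted Nikolskii on $\hat J$ yields
\[\|P_n\|_{\hat J}^2\le C_0^2\,n\bigl(c_0^{-1}r^{-\alpha}\|P_n\|_{L^2(\nu_\alpha)}^2+3r\,\|P_n\|_{\hat J}^2\bigr),\]
and choosing $r$ of order $1/n$ so that $3C_0^2nr\le 1/2$ absorbs the $\|P_n\|_{\hat J}^2$-term into the left, leaving $\|P_n\|_{\hat J}^2\le C\,n^{1+\alpha}\|P_n\|_{L^2(\nu_\alpha)}^2$. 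Since $\|P_n\|_{J^*}\le\|P_n\|_{\hat J}$, this is (\ref{nik1}).

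The main obstacle is the unweighted Nikolskii bound on a subarc itself: a naive appeal to the Bernstein inequality Lemma \ref{bernstein} produces $\|P_n\|_J$ on the right-hand side, which cannot be controlled by the $L^2$-norm in an elementary way. The clean resolution is to invoke the Christoffel function asymptotics already quoted in the introduction, which subsume the required estimate. The continuous positive weight $w$ of the last sentence only rescales the constants (since $c_0\le w\le C_0$ on $J$), so the same argument applies unchanged.
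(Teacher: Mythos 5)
Your proof is correct but takes a genuinely different route from the paper's. The paper invokes the uniform two-sided comparison of \cite[Lemmas 3.8, Corollary 3.9]{Varga} for the weighted (doubling) measure $\nu_\a$ directly, obtaining $\lambda_n(\nu_\a,z)\sim\nu_\a(l_{1/n}(z))$ uniformly on $J^*$ and then computing $\nu_\a(l_{1/n}(z))$ in each regime of $\a$. You instead reduce to the \emph{unweighted} Nikolskii inequality on a subarc and, for $\a\ge 0$, recover the extra factor $n^{\a/2}$ by a dyadic-free split of the $s_J$-integral near $z_0$ and an absorption with $r\asymp 1/n$. Your route is cleaner conceptually --- it isolates the one hard ingredient (the unweighted estimate) and derives everything else by elementary means, with the $-1<\a<0$ case following for free --- and it avoids checking that $\nu_\a$ is a doubling weight. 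The paper's route is shorter once the Varga machinery is accepted and, importantly, gives the stated bound also when $z_0$ is an endpoint of $J$: your construction of $\hat J$ containing $z_0$ in its interior silently uses that $z_0$ is an interior point, a hypothesis not contained in the lemma (although it always holds in the paper's applications).

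Two small points of precision. First, your justification of the unweighted Nikolskii bound by appeal to (\ref{00}) is slightly loose: as displayed in the introduction (\ref{00}) is a pointwise limit for a fixed $z_0$, whereas what you need is a \emph{uniform} lower bound $\lambda_n(s_J,z)\ge c/n$ on a compact subarc. That uniform bound is true, but the self-contained way to get it within the paper's framework is exactly the $\a=0$ instance of the very same \cite{Varga} estimates the paper cites --- so it is slightly odd to present it as an independent tool rather than as the special case of the source the paper relies on. Second, in the absorption step the constant $3$ for the arc-length bound should be replaced by a constant coming from the $C^{1+}$-smoothness of $J$ (chord and arc length comparability), but that is purely cosmetic and the argument is unaffected.
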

\proof. In view of \cite[Lemmas 3.8 and Corollary 3.9]{Varga}
(use also that $\nu_\a$ is a doubling weight
in the sense of \cite{Varga})
uniformly in $z\in J^*$ we have  for large $n$ the relation
\[\l_n(\nu_\a,z)\sim \nu_\a(l_{1/n}(z)),\]
where $A\sim B$ means that the ratio lies in between
two constants, and where
$l_{1/n}(z)$ is the arc of $J$ consisting of those
points of $z$ that lie of distance $\le 1/n$ from $z$.
If $\a\ge 0$, then
 \[\nu_\a(l_{1/n}(z))\ge \frac{c}{n^{1+\a}},\]
while for $-1<\a<0$
 \[\nu_\a(l_{1/n}(z))\ge \frac{c}{n},\]
with some positive constant $c$ which depends only
on $\a,J$ and $J^*$. Therefore, we have for all $z\in J^*$
the inequality
\be \l_n(\nu_\a,z)\ge \frac{c}{n^{1+\a}}\label{nik4}\ee
if $\a \ge 0$ and
\be\l_n(\nu_\a,z)\ge \frac{c}{n}\label{nik3}\ee
when $-1<\a<0$.

For example, (\ref{nik4}) means that
if $\a\ge 0$ and $|P_n(z)|=1$ for some $z\in J^*$, then
necessarily
\[\frac{n^{1+\a}}{c}\int_J |P_n|^2d\nu_\a\ge 1,\]
which is equivalent to saying that  for any $P_n$ and $z\in J^*$
\[\frac{n^{1+\a}}{c}\int_J |P_n|^2d\nu_\a\ge |P_n(z)|^2,\]
and this is  (\ref{nik1}). In a similar manner,
(\ref{nik2}) follows from (\ref{nik3}).

It is clear that this proof does not change if
$\nu_\a$ is as in the last sentence of the lemma.
\endproof

\begin{lem}\label{Nikolskii} If $\a>-1$, then there is a constant $C_\a$ such that
for any polynomial $P_n$ of degree at most $n$ the inequality
\be \|P_n\|_{[-1,1]}\le C_\a n^{(1+\a^{*})/2}\left(\int_{-1}^1|P_n(x)|^2|x|^\a dx\right)^{1/2}
\label{N2}\ee
holds with $\a^{*}=\max(1,\a)$.\end{lem}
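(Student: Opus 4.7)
My plan is to reduce (\ref{N2}) to a uniform lower bound on the Christoffel function of $d\mu(x)=|x|^\alpha dx$ on $[-1,1]$: from the standard reproducing-kernel inequality $|P_n(x_0)|^2\le\|P_n\|^2_{L^2(\mu)}/\lambda_n(\mu,x_0)$, it suffices to establish
\[
    \lambda_n(\mu,x_0)\ge c_\alpha n^{-(1+\alpha^*)},\qquad x_0\in[-1,1].
\]
I would split $[-1,1]$ into the interior $[-1/2,1/2]$ and the two boundary pieces $[-1,-1/2]\cup[1/2,1]$, and treat them separately.

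On the interior, apply Lemma \ref{Niklemma} to $J=[-1,1]$, $J^*=[-1/2,1/2]$, $z_0=0$, so that $\nu_\alpha=|x|^\alpha dx$. The proof of that lemma produces the pointwise Christoffel-function bounds (\ref{nik4}) and (\ref{nik3}): $\lambda_n(\mu,x_0)\ge c/n^{1+\alpha}$ for $\alpha\ge 0$ and $\lambda_n(\mu,x_0)\ge c/n$ for $-1<\alpha<0$. Since $\alpha^*=\max(1,\alpha)\ge \alpha$ and $\alpha^*\ge 1$, in either regime one has $n^{1+\alpha}\le n^{1+\alpha^*}$ and $n\le n^{1+\alpha^*}$, so both estimates majorize the target $c_\alpha/n^{1+\alpha^*}$.

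On the boundary pieces, by symmetry I may assume $x_0\in(1/2,1]$. The weight $|x|^\alpha$ is bounded below by a positive constant $c_\alpha$ on $[1/2,1]$, so $\mu\ge c_\alpha dx|_{[1/2,1]}$. By monotonicity of $\lambda_n$ in the measure,
\[
    \lambda_n(\mu,x_0)\ge c_\alpha\,\lambda_n(dx|_{[1/2,1]},x_0).
\]
The classical Christoffel-function bound for Lebesgue measure on a fixed interval -- equivalent to the well-known Legendre asymptotics $\lambda_n(dx|_{[-1,1]},\pm 1)\sim 1/n^2$ combined with the larger interior bound $\lambda_n(dx,x)\sim\sqrt{1-x^2}/n$ -- gives $\lambda_n(dx|_{[a,b]},x)\ge c/n^2$ uniformly in $x\in[a,b]$. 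Since $\alpha^*\ge 1$, this yields $\lambda_n(\mu,x_0)\ge c/n^2\ge c/n^{1+\alpha^*}$.

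The main obstacle is not technical but conceptual: one must notice that the endpoints $\pm 1$ of $[-1,1]$, where $|x|^\alpha$ is of order one and the singularity at $0$ is irrelevant, require a different argument from the singular point $0$, which is handled by Lemma \ref{Niklemma}. The interior is controlled by the singularity-adapted estimate (giving the $n^{1+\alpha}$ rate), the boundary by the classical Lebesgue endpoint bound (giving the $n^2$ rate), and the exponent $\alpha^*=\max(1,\alpha)$ appearing in (\ref{N2}) is precisely the worse of the two. Combining the two regimes gives the uniform lower bound on $\lambda_n(\mu,\cdot)$, and (\ref{N2}) follows from the reproducing-kernel inequality with $C_\alpha=c_\alpha^{-1/2}$.
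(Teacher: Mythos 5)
Your argument is correct, and it reaches the same conclusion via a somewhat different route than the paper. Both proofs boil down to the same uniform lower bound $\lambda_n(|x|^\alpha dx,x_0)\ge c_\alpha n^{-(1+\alpha^*)}$ and the standard reproducing-kernel inequality, and both rest on the observation that there are two competing regimes -- the algebraic singularity at $0$, which costs $n^{1+\alpha}$, and the endpoints $\pm 1$, which cost $n^2$ -- so that the global rate is governed by the larger exponent $\alpha^*=\max(1,\alpha)$. The paper handles both regimes in one stroke: it takes $J=J^*=[-1,1]$ and invokes Varga's doubling-weight estimate $\lambda_n(\nu_\alpha,z)\sim\nu_\alpha(l_{1/n}(z))$ directly, with the position-dependent neighborhood $l_{1/n}(z)$ of width $1/n^2$ near the endpoints and $\sqrt{1-z^2}/n$ in the interior, and then just reads off the two lower bounds $\nu_\alpha(l_{1/n}(z))\gtrsim\min(n^{-2},n^{-1-\alpha})$ (for $\alpha\ge 0$) and $\gtrsim n^{-2}$ (for $\alpha<0$). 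You instead split $[-1,1]$ into $[-1/2,1/2]$ and the two boundary pieces, apply the already-proved Lemma~\ref{Niklemma} on the interior (where the no-common-endpoint hypothesis is satisfied), and treat the boundary pieces by comparing $|x|^\alpha\,dx$ with Lebesgue measure on a short interval and citing the classical $\lambda_n(dx|_{[a,b]},x)\ge c/n^2$ bound. Your route is more modular -- it reuses Lemma~\ref{Niklemma} as a black box and replaces a second direct appeal to Varga's result with an elementary comparison -- at the cost of being a bit longer; the paper's is more compact because Varga's two-sided estimate is already written to handle the full interval, endpoints included, in a single formula.
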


\proof. We follow the
preceding proof, but now both $J$ and $J^*$ agree
with  $[-1,1]$.

Let  $J=J^*=[-1,1]$, $z_0=0$,
 $\D_n(z)=1/n^2$ if $ z\in [-1, -1+1/n^2]$ or $z\in [1-1/n^2,1]$,
and set $\D_n(z)=\sqrt{1-z^2}/n$ if $z\in [-1+1/n^2,1-1/n^2]$.
If now $l_{1/n}(z)$ is the interval $[z-\D_n(z),z+\D_n(z)]$ intersected
with $[-1,1]$, then
\cite[Lemmas 3.8 and Corollary 3.9]{Varga}
state  that for  $d\nu_\a(x)=|x-z_0|^\a dx=|x|^\a dx$ on $[-1,1]$
we have
\[ \l_n(\nu_\a,z)\sim \nu_\a(l_{1/n}(z)).\]
If $\a\ge 0$, then
 \[\nu_\a(l_{1/n}(z))\ge c\min\left(\frac{1}{n^2},\frac{1}{n^{1+\a}}\right),\]
while for $-1<\a<0$
 \[\nu_\a(l_{1/n}(z))\ge \frac{c}{n^2},\]
with some positive constant $c$.
Hence,
\[\l_n(\nu_\a,z)\ge \frac{c}{n^{2}}\]
if $-1<\a\le 1$, while
\[\l_n(\nu_\a,z)\ge \frac{c}{n^{1+\a}}\]
if $\a\ge 1$,
from which (\ref{N2}) follows exactly as before.\endproof

The Nikolskii inequalties can be combined with the following
estimate to get an upper bound for the extremal polynomials
that produce $\l_n(\mu,z)$.

\begin{lem}\label{a_priori_estimate} With the assumptions of Theorem \ref{thmain}
we have
\[
    \lambda_n(\mu, z_0) \leq C n^{-(\alpha+1)}
\]
with some constant $C$ that is independent of $n$.
\end{lem}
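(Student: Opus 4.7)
The plan is to build a test polynomial $P_n=S_m\cdot Q_k$ of degree at most $n=m+k$ with $P_n(z_0)=1$, where $Q_k$ is an extremal polynomial for a dominating measure of $\mu$ near $z_0$ and $S_m$ is a fast decreasing polynomial from Proposition \ref{fdp} that suppresses the contribution from the rest of $\Gamma$.

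Fix $\delta>0$ small enough that $J_0:=\Gamma\cap B(z_0,\delta)\subset J$ is contained in the neighborhood where $d\mu=w(z)|z-z_0|^\alpha ds_J(z)$. Choose a strictly positive continuous $\tilde w$ on $J$ with $\tilde w\ge w$ on $J_0$, and set $d\tilde\nu(z)=\tilde w(z)|z-z_0|^\alpha ds_J(z)$ on $J$, so that $\mu|_{J_0}\le\tilde\nu$. Since $z_0$ lies in the interior of $J$, the equivalence $\lambda_k(\tilde\nu,z)\sim\tilde\nu(l_{1/k}(z))$ invoked in the proof of Lemma \ref{Niklemma} applies at $z_0$ and gives $\lambda_k(\tilde\nu,z_0)\le C k^{-(\alpha+1)}$. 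Let $Q_k$ be the corresponding extremal polynomial: $\deg Q_k\le k$, $Q_k(z_0)=1$, and $\int_J|Q_k|^2\,d\tilde\nu\le C k^{-(\alpha+1)}$.

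The next step is to control $|Q_k|$ on all of $\Gamma$. Combining $\|Q_k\|_{L^2(\tilde\nu)}\le C k^{-(\alpha+1)/2}$ with Lemma \ref{Niklemma} on a compact subarc $J^*\subset J$ containing $z_0$ in its interior yields $\|Q_k\|_{J^*}\le C k^{C_1}$ for a fixed $C_1\ge 0$ (one checks $C_1=0$ for $\alpha\ge 0$ and $C_1=-\alpha/2$ for $\alpha\in(-1,0)$). Bernstein--Walsh then extends this to $\|Q_k\|_\Gamma\le C k^{C_1}e^{A k}$ with $A:=\sup_{z\in\Gamma}g_{\C\setminus J^*}(z)<\infty$. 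Now fix any $\gamma>1$ and let $S_m=S_{m,z_0,\Gamma}$ be as in Proposition \ref{fdp}, so $S_m(z_0)=1$, $|S_m|\le 1$ on $\Gamma$, and $|S_m(z)|\le C_\gamma e^{-mc_\gamma|z-z_0|^\gamma}$ on $\Gamma$. Pick a fixed $c\in(A/(A+c_\gamma\delta^\gamma),1)$ and set $m=\lceil cn\rceil$, $k=n-m$; both are linear in $n$, and by the choice of $c$ one has $m c_\gamma\delta^\gamma-kA\ge\eta n$ for some $\eta>0$.

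With $P_n=S_mQ_k$ we have $\deg P_n\le n$, $P_n(z_0)=1$, and, using $|S_m|\le 1$ on $\Gamma$ and $|z-z_0|\ge\delta$ on $\Gamma\setminus J_0$,
\[
\int|P_n|^2\,d\mu\le\int_J|Q_k|^2\,d\tilde\nu+\mu(\Gamma)\,C^2 k^{2C_1}e^{-2\eta n}\le Cn^{-(\alpha+1)},
\]
because the first term is $O(k^{-(\alpha+1)})=O(n^{-(\alpha+1)})$ and the second is exponentially small. This gives $\lambda_n(\mu,z_0)\le Cn^{-(\alpha+1)}$. The main technical tension is balancing the exponential decay of the fast decreasing polynomial $S_m$ on $\Gamma\setminus J_0$ against the potentially exponential Bernstein--Walsh growth of the extremal $Q_k$ on $\Gamma\setminus J^*$; this is what forces the explicit lower bound on the linear fraction $c$ of the degree devoted to the localization.
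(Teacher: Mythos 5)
Your argument is correct, but it takes a genuinely different and much more elaborate route than the paper's, which is essentially a one-step computation. The paper tests $\lambda_n(\mu,z_0)$ directly with the single fast decreasing polynomial $S_{n,z_0,\Gamma}$ of Proposition~\ref{fdpalt} (with $\beta=1/2$, $K=\Gamma$), using the pointwise bound $|S_{n,z_0,\Gamma}(z)|\le C_\beta e^{-c_\beta(n|z-z_0|)^{1/2}}$: after the substitution $u=(n|t|)^{1/2}$ the local contribution $\int e^{-2c_\beta(n|t|)^{1/2}}|t|^\alpha\,dt$ is already $O(n^{-\alpha-1})$, and the rest of $\Gamma$ contributes a term $O(e^{-2c_\beta(n\delta)^{1/2}})$. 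No auxiliary extremal polynomial, no Nikolskii or Bernstein--Walsh estimate, and no splitting of the degree budget are needed. The key difference is which fast-decreasing polynomial you choose: Proposition~\ref{fdpalt} decays like $e^{-c(n|z-z_0|)^\beta}$, i.e. already gives polynomial-strength smallness at the critical scale $|z-z_0|\sim 1/n$, so the weight $|z-z_0|^\alpha$ is integrated against it directly. The $e^{-cn|z-z_0|^\gamma}$ decay of Proposition~\ref{fdp} that you invoke only becomes effective at the coarser scale $|z-z_0|\gtrsim n^{-1/\gamma}$, which is why you are then forced to introduce the extremal polynomial $Q_k$ for a dominating model measure $\tilde\nu$ to carry the $k^{-(\alpha+1)}$ asymptotics at scale $1/k$, and to balance the Bernstein--Walsh growth of $Q_k$ against the exponential decay of $S_m$ on $\Gamma\setminus J_0$. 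Your handling of that balance --- the choice $c>A/(A+c_\gamma\delta^\gamma)$ giving a positive margin $\eta n$ in the exponent, the Varga two-sided estimate yielding $\lambda_k(\tilde\nu,z_0)\le Ck^{-(\alpha+1)}$, and the Nikolskii bound $\|Q_k\|_{J^*}\le Ck^{C_1}$ with $C_1=\max(0,-\alpha/2)$ --- is all correct, so the proof goes through. It is, however, a long detour where the paper's use of Proposition~\ref{fdpalt} is immediate.
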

\proof. Just use the polynomials $S_{n,z_0,\G}$ from
Proposition \ref{fdpalt} with $\b=1/2$ and $K=\G$.
Let $\d>0$ be so small that in the $\d$-neighborhood of $z_0$
we have the $ d\mu(z) = w(z)|z-z_0|^\alpha ds_{\Gamma}(z) $
representation for $\mu$. Outside this $\d$-neighborhood
$|S_{n,z_0,\G}|$ is smaller than $C_{\b}\exp(-c_{\b}(n\d)^{1/2})$, so
\[\int |S_{n,z_0,\G}|^2d\mu\le C\int e^{-2c_{\b}(n|t|)^{1/2}}|t|^\a dt +C
e^{-2c_\b (n\d)^{1/2}}\le Cn^{-\a-1},\]
which proves the claim.
\endproof

We close this section with the classical Bernstein-Walsh lemma,
see  \cite[p. 77]{Walsh}.

\begin{lem}\label{BW} Let $K\subset \C$ be a compact subset of
positive logarithmic capacity, let $\O$ be the unbounded component
of $\ov\C\setm K$, and $g_\O$ the Green's function of this
unbounded component with pole at infinity. Then, for polynomials
$P_n$ of degree at most $n=1,2,\ldots$, we have for any $z\in \C$
\[|P_n(z)|\le e^{ng_\O(z)}\|P_n\|_K.\]
\end{lem}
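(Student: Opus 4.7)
The plan is to deduce the bound from the maximum principle applied to an appropriate subharmonic function on $\Omega$. Consider
\[ u(z) = \log|P_n(z)| - n g_\Omega(z), \]
which is subharmonic on $\Omega$ because $\log|P_n|$ is subharmonic on all of $\C$ while $g_\Omega$ is harmonic on $\Omega$. For $z\in K$ the inequality of the lemma is immediate from $g_\Omega(z)\ge 0$ combined with $|P_n(z)|\le \|P_n\|_K$, so it suffices to bound $u$ on $\Omega$ itself.

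On the finite portion of $\partial\Omega$, which is contained in $K$, one has $g_\Omega=0$ quasi-everywhere, so $\limsup_{\Omega\ni w\to \zeta} u(w) \le \log\|P_n\|_K$ for all $\zeta\in\partial\Omega$ outside a polar exceptional set. Near infinity, $g_\Omega(z)=\log|z|-\log\mathrm{cap}(K)+o(1)$, while $\log|P_n(z)|\le n\log|z|+O(1)$, so $\limsup_{z\to\infty} u(z)$ is finite; in particular $u$ is bounded above on $\Omega$. The extended maximum principle for subharmonic functions on unbounded domains (which allows one to ignore polar exceptional sets on the boundary, as formulated, e.g., in \cite{Ransford}) then yields $u(z)\le \log\|P_n\|_K$ throughout $\Omega$. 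Exponentiating gives exactly the desired estimate $|P_n(z)|\le e^{ng_\Omega(z)}\|P_n\|_K$.

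The only conceptual subtlety is the treatment of irregular boundary points of $\Omega$ where the boundary values of $g_\Omega$ may fail to vanish, but this is absorbed into the generalized maximum principle since the set of such points is polar. Beyond this standard potential-theoretic input there are no calculations to perform, and the lemma follows directly.
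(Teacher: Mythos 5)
Your proof is the standard subharmonicity and generalized maximum-principle argument for the Bernstein--Walsh inequality; the paper itself supplies no proof and simply cites Walsh, so there is no internal proof to compare against, and your route is indeed the classical one. The subharmonicity of $u=\log|P_n|-ng_\Omega$ on $\Omega$, the bounded-above behaviour at infinity (because the logarithmic pole of $g_\Omega$ dominates the polynomial growth of $\log|P_n|$), and the invocation of the extended maximum principle that disregards the polar set of irregular boundary points are all correct.

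One small gap: the lemma asserts the bound for \emph{every} $z\in\C$, but you only treat $z\in K$ and $z\in\Omega$. If $\ov\C\setminus K$ has bounded components $B$, a point $z\in B$ lies in neither set. There $g_\Omega(z)=0$ by the convention (used explicitly later in the paper) of extending $g_\Omega$ by zero outside $\Omega$, so the claim reduces to $|P_n(z)|\le\|P_n\|_K$, which follows from the ordinary maximum modulus principle for $P_n$ on $B$ since $\partial B\subset K$. Adding this one observation completes the proof of the statement as written.
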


\sect{The model cases}

\subsection{Measures on the real line}\label{sectmodel1}
Our first goal is to establish asymptotics for the Christoffel function at \( 0 \) with respect to the measure \( d\mu(x) = |x|^\alpha dx \), \( x \in [-1,1] \). We do this by transforming some
 previously known results.

 In what follows, for simpler notations, if $d\mu(x)=w(x)dx$, then
 we shall write $\l_n(w(x),z)$ for $\l_n(\mu,z)$.

\begin{prop}\label{model_1}
For $\a>-1$ we have
\begin{equation}
    \lim_{n \to \infty} n^{2\alpha + 2} \lambda_n\left({|x|^\a\restr{[0,1]}},0\right) = \Gamma(\alpha + 1)\Gamma(\alpha + 2).
\end{equation}
\end{prop}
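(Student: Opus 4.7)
The plan is to reduce this endpoint-weight problem on $[0,1]$ to a corresponding \textbf{interior-point} problem on $[-1,1]$ by a quadratic substitution, and then invoke a known asymptotic for $|t|^{\beta}$ on $[-1,1]$ at the origin.

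\medskip\noindent\textbf{Step 1: Symmetrization via $x=t^{2}$.}
If $P$ is any polynomial of degree $\le n$ with $P(0)=1$, set $Q(t)=P(t^{2})$, an even polynomial of degree $\le 2n$ with $Q(0)=1$. The substitution $x=t^{2}$ gives
\[
\int_{0}^{1}|P(x)|^{2}\,x^{\alpha}\,dx
=\int_{-1}^{1}|Q(t)|^{2}\,|t|^{2\alpha+1}\,dt,
\]
so the infimum defining $\lambda_{n}(x^{\alpha}\!\restr{[0,1]},0)$ equals the infimum of $\int_{-1}^{1}|Q|^{2}|t|^{2\alpha+1}dt$ over all \emph{even} polynomials $Q$ of degree $\le 2n$ with $Q(0)=1$. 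Since the weight $|t|^{2\alpha+1}$ is even, the standard even/odd-decomposition $Q=Q_{e}+Q_{o}$ (with $\int Q_{e}\overline{Q_{o}}\,|t|^{2\alpha+1}\,dt=0$) shows the best polynomial of degree $\le 2n$ with value $1$ at $0$ is automatically even. Therefore
\be
\lambda_{n}\bigl(x^{\alpha}\!\restr{[0,1]},0\bigr)=\lambda_{2n}\bigl(|t|^{2\alpha+1}\!\restr{[-1,1]},0\bigr).\label{modelequiv}
\ee

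\medskip\noindent\textbf{Step 2: Known asymptotic at an interior point.}
Next I invoke the classical asymptotic for the Christoffel function at the origin for the weight $|t|^{\beta}$ on $[-1,1]$ (due to Nevai and others); with $\beta>-1$,
\[
\lim_{n\to\infty}n^{\beta+1}\lambda_{n}\bigl(|t|^{\beta}\!\restr{[-1,1]},0\bigr)
=\frac{1}{(\pi\omega_{[-1,1]}(0))^{\beta+1}}\,2^{\beta+1}\,
\Gamma\!\Big(\tfrac{\beta+1}{2}\Big)\Gamma\!\Big(\tfrac{\beta+3}{2}\Big),
\]
where $\omega_{[-1,1]}(0)=1/(\pi\sqrt{1-0^{2}})=1/\pi$ is the equilibrium density of $[-1,1]$ at the origin. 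Hence
\[
\lim_{n\to\infty}n^{\beta+1}\lambda_{n}\bigl(|t|^{\beta}\!\restr{[-1,1]},0\bigr)
=2^{\beta+1}\,\Gamma\!\Big(\tfrac{\beta+1}{2}\Big)\Gamma\!\Big(\tfrac{\beta+3}{2}\Big).
\]

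\medskip\noindent\textbf{Step 3: Specialize and combine.}
Apply this with $\beta=2\alpha+1$, so $(\beta+1)/2=\alpha+1$, $(\beta+3)/2=\alpha+2$, and $\beta+1=2\alpha+2$. Then, using \eqref{modelequiv} with $n$ replaced by $n$ and passing $2n$ through the limit,
\[
n^{2\alpha+2}\lambda_{n}\bigl(x^{\alpha}\!\restr{[0,1]},0\bigr)
=\frac{1}{2^{2\alpha+2}}(2n)^{2\alpha+2}\lambda_{2n}\bigl(|t|^{2\alpha+1}\!\restr{[-1,1]},0\bigr)
\;\longrightarrow\;\frac{2^{2\alpha+2}}{2^{2\alpha+2}}\,\Gamma(\alpha+1)\Gamma(\alpha+2)=\Gamma(\alpha+1)\Gamma(\alpha+2),
\]
which is the claim.

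\medskip\noindent\textbf{Main obstacle.}
The only nontrivial input is the interior-point asymptotic for $|t|^{\beta}$ on $[-1,1]$ cited in Step 2; everything else is the clean change of variable and an elementary symmetrization. Provided that classical result is available as a black box, the proposition follows with essentially no extra work; in fact the only arithmetic to check is that the gamma factors in the $\beta=2\alpha+1$ case and the factors of $2$ arising from $(2n)^{2\alpha+2}$ exactly cancel, which they do as shown above.
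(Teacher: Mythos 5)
Your reduction and arithmetic are correct, and the symmetrization via the even/odd decomposition in Step~1 is a cleaner way to obtain the identity
\[
\lambda_{n}\bigl(x^{\alpha}\!\restr{[0,1]},0\bigr)=\lambda_{2n}\bigl(|t|^{2\alpha+1}\!\restr{[-1,1]},0\bigr)
\]
than the Cauchy--Schwarz argument the paper uses to prove the equivalent identity (\ref{mod01}). However, the logical direction of your proof is the reverse of the paper's, and this is where a genuine gap appears. The paper proves Proposition~\ref{model_1} by citing Lubinsky's hard-edge (Bessel kernel) asymptotic
\[
\lim_{n \to \infty} n^{2\alpha + 2} \lambda_n\bigl((1-x)^\alpha\!\restr{[-1,1]}, 1\bigr) = 2^{\alpha+1}\Gamma(\alpha + 1) \Gamma(\alpha + 2)
\]
and then applying the linear change of variable $x\mapsto(1-x)/2$, which maps the endpoint $1$ to the endpoint $0$ and the weight $(1-x)^\alpha$ to a constant times $x^\alpha$; the interior-point asymptotic for $|t|^{\beta}$ at $0$ (your Step~2) is then \emph{derived} from Proposition~\ref{model_1} as Proposition~\ref{model_2}, via exactly the quadratic substitution you use. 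The asymptotic you invoke in Step~2 is therefore not an independent classical fact in this setting: the standard route to it is the hard-edge reduction, i.e.\ the very argument you are running backwards. The attribution to ``Nevai and others'' is off --- the classical M\'at\'e--Nevai--Totik-type interior asymptotics require the weight to be continuous (or a Lebesgue point) at the evaluation point, which fails for $|t|^{\beta}$ at $t=0$; the correct treatment of a power-type singularity goes through Bessel universality at the hard edge (Lubinsky, Kuijlaars--Vanlessen). So as written, your proof is circular within the paper's framework, and to make it stand on its own you would need to identify a genuinely independent source for Step~2 --- at which point you would in effect be re-deriving Lubinsky's endpoint result, which is what the paper cites directly and transports by a simple linear map.
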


\proof. It follows from \cite[(1.10)]{Lubinsky2} or \cite[Theorem 4.1]{Lubinsky1}
that
    \begin{equation}\label{rer}
    \lim_{n \to \infty} n^{2\alpha + 2} \lambda_n\left({(1-x)^\a\restr{[-1,1]}}, 1\right) = 2^{\alpha+1}\Gamma(\alpha + 1) \Gamma(\alpha + 2),
    \end{equation}
from which the claim is an immediate consequence if we apply the linear transformation $x\to (1-x)/2$.\endproof

\begin{prop}\label{model_2}
    For  \( \alpha > -1 \) we have
    \begin{equation}\label{nincs}
    \lim_{n \to \infty} n^{\alpha + 1} \lambda_n\left({|x|^\a\restr{[-1,1]}},0\right) = L_\a,
        \end{equation}
where
\be L_\a:=2^{\alpha + 1} \Gamma\Big( \frac{\alpha + 1}{2} \Big) \Gamma\Big( \frac{\alpha + 3}{2} \Big).\label{lalpha}\ee

\end{prop}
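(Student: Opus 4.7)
The plan is to reduce Proposition \ref{model_2} to Proposition \ref{model_1} by exploiting the symmetry of $|x|^\alpha$ about $0$ and carrying out the substitution $t = x^2$.

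First, at the symmetry point $z_0 = 0$ I would show that the infimum in (\ref{infl}) can be restricted to even polynomials. If $P_n$ is any polynomial with $P_n(0)=1$, then $\widetilde{P}_n(x) = \tfrac{1}{2}(P_n(x)+P_n(-x))$ is even, still satisfies $\widetilde{P}_n(0)=1$, and by the convexity estimate $|a+b|^2 \le 2(|a|^2+|b|^2)$ together with the symmetry of the weight $|x|^\alpha$ on $[-1,1]$, one obtains
\[
\int_{-1}^{1} |\widetilde{P}_n(x)|^2 |x|^\alpha\, dx \le \int_{-1}^{1} |P_n(x)|^2 |x|^\alpha\, dx.
\]
Restricting to the subsequence $n=2m$, every even polynomial of degree $\le n$ takes the form $Q(x^2)$ with $\deg Q\le m$, and $P_{2m}(0) = Q(0) = 1$; hence
\[
\lambda_{2m}(|x|^\a\!\restr{[-1,1]},0) = \inf_{\substack{\deg Q\le m \\ Q(0)=1}} \int_{-1}^{1} |Q(x^2)|^2 |x|^\alpha\, dx.
\]

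Second, substitute $t = x^2$ in the integral on the right. Folding onto $[0,1]$ and using $dx = dt/(2\sqrt{t})$ yields
\[
\int_{-1}^{1} |Q(x^2)|^2 |x|^\alpha\, dx = \int_{0}^{1} |Q(t)|^2\, t^{(\alpha-1)/2}\, dt,
\]
so $\lambda_{2m}(|x|^\a\!\restr{[-1,1]},0) = \lambda_m(t^{(\alpha-1)/2}\!\restr{[0,1]},0)$. Since $\alpha>-1$ forces $(\alpha-1)/2>-1$, Proposition \ref{model_1} applies with parameter $\alpha'=(\alpha-1)/2$. One checks $2\alpha'+2=\alpha+1$, $\alpha'+1=(\alpha+1)/2$, $\alpha'+2=(\alpha+3)/2$, and therefore
\[
\lim_{m\to\infty} m^{\alpha+1}\lambda_m(t^{(\alpha-1)/2}\!\restr{[0,1]},0) = \Gamma\!\Big(\tfrac{\alpha+1}{2}\Big)\Gamma\!\Big(\tfrac{\alpha+3}{2}\Big).
\]
Multiplying by $(2m/m)^{\alpha+1}=2^{\alpha+1}$ gives $(2m)^{\alpha+1}\lambda_{2m}(|x|^\a\!\restr{[-1,1]},0) \to L_\alpha$ as $m\to\infty$. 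The limit along the full sequence then follows from (\ref{liminf})--(\ref{limsup}) applied to the subsequence $n_k=2k$, since $n_{k+1}/n_k=(2k+2)/(2k)\to 1$.

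There is no real obstacle; the argument is a clean model-case reduction. The only points requiring mild care are (a) the symmetrization step, which is why the convexity inequality is invoked, and (b) correctly tracking the factor $2^{\alpha+1}$ which appears both through the Jacobian of $t=x^2$ (hidden in the exponent $(\alpha-1)/2$) and through the passage from the rate in $m$ to the rate in $n=2m$.
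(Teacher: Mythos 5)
Your proposal is correct and follows essentially the same route as the paper: symmetrize to reduce to even polynomials, substitute $t=x^2$ to obtain the exact identity $\lambda_{2m}(|x|^\alpha\!\restr{[-1,1]},0) = \lambda_m(t^{(\alpha-1)/2}\!\restr{[0,1]},0)$, then invoke Proposition \ref{model_1} and pass from the subsequence $n=2m$ to the full sequence via (\ref{liminf})--(\ref{limsup}). The only cosmetic difference is that you justify the symmetrization step with the pointwise bound $|a+b|^2\le 2(|a|^2+|b|^2)$ where the paper uses Cauchy--Schwarz on the cross-term; both give the same one-line estimate, so this is not a substantively different argument.
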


\proof. Let us agree that in this proof, whenever we write $P_n,R_n$ etc. for polynomials, then it is understood that
the degree is at most $n$.

We use that (for continuous $f$)
    \begin{equation}\label{model_2_transform}
    \int_{0}^{1} f(x) |x|^\alpha dx  = \int_{-1}^{1} f(x^2) |x|^{2\alpha + 1} dx.
    \end{equation}

Assume first that \( P_{2n} \) is extremal for \( \lambda_{2n}\left({|x|^\alpha \restr{[-1,1]}},0\right) \), i.e.
$P_{2n}(0)=1$ and
\[\int_{-1}^1|P_{2n}(x)|^2|x|^\a dx=\lambda_{2n}\left({|x|^\alpha \restr{[-1,1]}},0\right).\]
Define
    \[
    R_n(x) = \frac{P_{2n}(x) + P_{2n}(-x)}{2}.
    \]
Then \( R_n(0) = 1 \), and \( R_n \) is a polynomial in \( x^2 \), hence
     \(  R_n(x)=R_{n}^{*}(x^2)   \) with some polynomial $R_n^*$,
     for which \( R_{n}^{*}(0) = 1 \) and \( \deg(R_{n}^{*}) \le  n \). Now we have
\bean     \int_{-1}^{1} |R_{n}(x)|^2 |x|^\alpha dx  = \int_{-1}^{1} |R_{n}^{*}(x^2)|^2|x|^\alpha dx
     &=& \int_{0}^{1} |R_{n}^{*}(x)|^2 |x|^{\frac{\alpha - 1}{2}} dx\\
     &\geq& \lambda_n\left( {|x|^{\frac{\alpha - 1}{2}}\restr{[0,1]}}, 0\right).\eean
     With the Cauchy-Schwarz inequality and with the symmetry of the measure \( |x|^\alpha dx \), we have
    \begin{align*}
    \int_{-1}^{1} |R_n(x)|^2|x|^\alpha dx & \leq \frac{1}{4} \int_{-1}^{1}\Big( |P_{2n}(x)|^2 + 2|P_{2n}(x)||P_{2n}(-x)| + |P_{2n}(-x)|^2 \Big) |x|^\alpha dx \\
    & \leq \frac{1}{2} \int_{-1}^{1} |P_{2n}(x)|^2 |x|^\alpha dx \\ &
     + \frac{1}{2} \Bigg( \int_{-1}^{1} |P_{2n}(x)|^2|x|^\alpha dx \Bigg)^{1/2} \Bigg( \int_{-1}^{1} |P_{2n}(-x)|^2|x|^\alpha dx \Bigg)^{1/2} \\
    & = \int_{-1}^{1} |P_{2n}(x)|^2|x|^\alpha dx
     = \lambda_{2n}\left({|x|^\alpha \restr{[-1,1]}}, 0\right).
    \end{align*}
    Combining these two estimates, we obtain
  \[\lambda_n\left({|x|^{\frac{\alpha - 1}{2}} \restr{[0,1]}}, 0\right) \leq \lambda_{2n}\left({|x|^\alpha \restr{[-1,1]}}, 0\right).\]

On the other hand, if now \( P_n \) is extremal for \( \lambda_n\left({|x|^{\frac{\alpha - 1}{2}} \restr{[0,1]}}, 0\right) \), then
\bean     \lambda_n\left({|x|^{\frac{\alpha -1}{2}}\restr{[0,1]}}, 0\right)  = \int_{0}^{1} |P_n(x)|^2 |x|^{\frac{\alpha - 1}{2}} dx
  &=& \int_{-1}^{1} |P_n(x^2)|^2 |x|^\alpha dx \\
  &\geq& \lambda_{2n}\left({|x|^\alpha\restr{[-1,1]}}, 0\right),\eean
therefore we actually have the equality
\be     \lambda_n\left({|x|^{\frac{\alpha -1}{2}}\restr{[0,1]}}, 0\right)  =  \lambda_{2n}\left({|x|^\alpha\restr{[-1,1]}}, 0\right),\label{mod01}\ee
from which the claim follows via Proposition \( \ref{model_1} \) (see also (\ref{liminf}) and (\ref{limsup}) with $n_k=2k$).

Note also that this proves also that if $P_n(x)$ is the $n$-degree extremal polynomials for
the measure ${|x|^{\frac{\alpha -1}{2}}\restr{[0,1]}}$, then $P_n(x^2)$ is
the $2n$-degree extremal polynomial for the measure
${|x|^\alpha\restr{[-1,1]}}$.

\endproof

\subsection{Measures on the  unit circle}

Let $ \mu_{\mathbb{T}} $ be the measure on the unit circle $ \mathbb{T} $ defined by $ d\mu_{\mathbb{T}}(e^{it}) = w_{\mathbb{T}} (e^{it}) dt $, where
\begin{equation}\label{circle_pre_model_case}
  w_{\mathbb{T}}(e^{it}) = \frac{|e^{2it}+1|^{\alpha}}{2^{\alpha}} \frac{|e^{2it}-1|}{2} , \qquad t \in [-\pi, \pi).
\end{equation}
We shall prove
\be
    \lim_{n \to \infty} n^{\alpha + 1} \lambda_n(\mu_\mathbb{T}, e^{i\pi/2}) = 2^{\a+1}L_\a\label{mt}
\ee
where $L_\a$ is from (\ref{lalpha}),
by transforming the measure $ \mu_\mathbb{T} $ into a measure $ \mu_{[-1,1]} $ supported on the interval $ [-1,1] $ and comparing the Christoffel functions for them.
With the transformation $ e^{it} \to \cos t $, we have
\[
    \int_{-\pi}^{\pi} f(\cos t) w_\mathbb{T}(e^{it})dt = 2 \int_{-1}^{1} f(x) w_{[-1,1]}(x) dx,
\]
where \[ w_{[-1,1]}(x) = |x|^\alpha.\]
Set $ d\mu_{[-1,1]}(x) = w_{[-1,1]}(x) dx $.

 Let $ P_n $ be the extremal polynomial for $ \lambda_n(\mu_{[-1,1]},0) $ and define
\[
    S_n(e^{it}) = P_n(\cos t) \Bigg( \frac{1+e^{i(t-\pi/2)}}{2} \Bigg)^{\lfloor \eta n \rfloor} e^{in(t-\pi/2)},
\]
where $ \eta > 0 $ is arbitrary. This $ S_n $ is a polynomial of degree $ 2n + \lfloor \eta n \rfloor $ with $ S_n(e^{i\pi/2}) = 1 $. For any fixed $0<\d<1$
\begin{equation}\label{circle_upper_1}
\begin{aligned}
    \int_{\pi/2 - \delta}^{\pi/2 + \delta} |S_n(e^{it})|^2 w_\mathbb{T}(e^{it}) dt & \leq \int_{\pi/2 - \delta}^{\pi/2 + \delta} |P_n(\cos t)|^2 w_\mathbb{T}(e^{it}) dt \\
    & \leq \int_{-1}^{1} |P_n(x)|^2 w_{[-1,1]}(x) dx \\
    & = \lambda_n(\mu_{[-1,1]},0).
\end{aligned}
\end{equation}
To estimate the corresponding integral over the intervals $ [-\pi, \pi/2 - \delta] $ and $ [\pi/2 + \delta, \pi] $, notice that
\be
    \max_{t \in [-\pi, \pi] \setminus [\pi/2 - \delta, \pi/2 + \delta]} \Bigg| \frac{1+e^{i(t-\pi/2)}}{2} \Bigg|^{\lfloor \eta n \rfloor} = O(q^n)
\label{qq}\ee
for some $ q < 1 $.
From  Lemma \ref{Nikolskii}
we obtain
\[
    \| P_n \|_{[-1,1]} \leq Cn^{{1+|\a|}/2} \| P_n \|_{L^2(\mu_{[-1,1]})} \leq Cn^{1+|\a|/2},\]
and so
\[    \left(\int_{-\pi}^{\pi/2 - \delta}+\int_{\pi/2 + \delta}^\pi\right) |S_n(e^{it})|^2 w_\mathbb{T}(e^{it}) dt
=O(n^{1+|\a|/2}q^n)=o(n^{-\a-1}).\]
Therefore, using this $S_n$ as a test polynomial for $\lambda_{\deg(S_n)}(\mu_\mathbb{T}, e^{i\pi/2})$ we  conclude
\[
    \lambda_{\deg(S_n)}(\mu_\mathbb{T}, e^{i\pi/2}) \leq  \lambda_n(\mu_{[-1,1]},0)
    +o(n^{-\a-1}),
\]
and so
\begin{align*}
    \limsup_{n \to \infty} \big( 2n + \lfloor \eta n \rfloor \big)^{\alpha + 1} \lambda_{2n + \lfloor \eta n \rfloor}(\mu_\mathbb{T}, e^{i\pi/2}) & \leq \limsup_{n \to \infty} \big( 2 + \lfloor \eta n \rfloor /n \big)^{\alpha + 1} n^{\alpha + 1} \lambda_n(\mu_{[-1,1]}, 0) \\
    & = (2+\eta)^{\alpha + 1} L_\a,
\end{align*}
where we used Proposition  \ref{model_2} for the measure $\mu_{[-1,1]}$.

Since $ \eta > 0 $ was arbitrary,
\be
    \limsup_{n \to \infty} n^{\alpha + 1} \lambda_n(\mu_{\mathbb{T}}, e^{i\pi/2}) \leq 2^{\alpha + 1} L_\a
\label{lll}\ee
follows (see also (\ref{limsup})).

Now to prove the matching lower estimate,
let $S_{2n}(e^{it}) $ be the extremal polynomial for $ \lambda_{2n}(\mu_\mathbb{T}, e^{i\pi/2}) $. Define
\[
    P_{n}^{*}(e^{it}) = S_{2n}(e^{it}) \Bigg( \frac{1+e^{i(t-\pi/2)}}{2} \Bigg)^{2\lfloor \eta n \rfloor} e^{-(n+\lfloor \eta n \rfloor)i(t-\pi/2)}
\]
and $ P_n(\cos t) = P_{n}^{*}(e^{it}) + P_{n}^{*}(e^{-it}) $. Note that $ P_n(\cos t) $ is a polynomial in $ \cos t $ of $ \deg(P_n) \leq n + \lfloor \eta n \rfloor $ and $ P_n(0) = 1 $. With it we have
\begin{equation}\label{circle_lower_1}
    \lambda_{\deg(P_n)}(\mu_{[-1,1]},0) \leq \int_{-1}^{1} |P_n(x)|^2 w_{[-1,1]}(x) dx = \frac{1}{2} \int_{-\pi}^{\pi} |P_n(\cos t)|^2 w_\mathbb{T}(e^{it}) dt.
\end{equation}
First, we claim that for every fixed $ 0 < \delta < 1$
\begin{equation}\label{circle_lower_pn}
\begin{aligned}
    |P_n(\cos t)|^2 & = |P_{n}^{*}(e^{it})|^2 + O(q^n), \quad t \in [\pi/2-\delta,\pi/2 + \delta], \\
    |P_n(\cos t)|^2 & = |P_{n}^{*}(e^{-it})|^2 + O(q^n), \quad t \in [-\pi/2 - \delta, -\pi/2 + \delta], \\
    |P_n(\cos t)|^2 & = O(q^n) \quad \textnormal{otherwise,}
\end{aligned}
\end{equation}
hold for some $ q < 1 $. Indeed,
\[
    |P_n(\cos t)|^2 = |P_{n}^{*}(e^{it}) + P_{n}^{*}(e^{-it})|^2  \leq |P_{n}^{*}(e^{it})|^2 + 2|P_{n}^{*}(e^{it})||P_{n}^{*}(e^{-it})| + |P_{n}^{*}(e^{-it})|^2.
\]
If we apply Lemma \ref{Niklemma} to two subarcs (say of length
$5\pi/4$) of $\mathbb{T}$
that contain the upper, resp. the lower half of the unit circle, then we obtain
that
\[
    \| P_n^* \|_{\mathbb{T}}\le \|S_{2n}\|_{\mathbb{T}}
 \leq Cn^{(1+|\a|)/2}\| S_{2n} \|_{L^2(\mu_{\mathbb{T}})}\le Cn^{(1+|\a|)/2}.
\]
 Therefore (use (\ref{qq}))
\[
    |P_{n}^{*}(e^{it})| \leq C  q^n n^{(1+|\a|)/2}, \quad t \in [-\pi,\pi] \setminus [\pi/2 - \delta, \pi/2 + \delta].
\]
These imply (\ref{circle_lower_pn}). \\

Now we have
\begin{align*}
    \int_{-\pi}^{\pi} |P_n(\cos t)|^2 w_\mathbb{T}(e^{it}) dt & = \Bigg( \int_{\pi/2 - \delta}^{\pi/2 + \delta}  + \int_{-\pi/2 - \delta}^{-\pi/2 + \delta} \Bigg) |P_n(\cos t)|^2 w_\mathbb{T}(e^{it}) dt \\
    & + \Bigg(\int_{-\pi}^{-\pi/2 - \delta} + \int_{-\pi/2 + \delta}^{\pi/2 - \delta} + \int_{\pi/2 + \delta}^{\pi} \Bigg) |P_n(\cos t)|^2 w_\mathbb{T}(e^{it}) dt.
\end{align*}
(\ref{circle_lower_pn}) tells us that the last three terms  are $ O(q^n) $. For the other two terms we have, again by (\ref{circle_lower_pn}),
\begin{align*}
    \int_{\pi/2 - \delta}^{\pi/2 + \delta} |P_n(\cos t)|^2 w_\mathbb{T}(e^{it}) dt &=
    \int_{\pi/2 - \delta}^{\pi/2 + \delta} |P_n^*(e^{it})|^2 w_\mathbb{T}(e^{it}) dt + O(q^n) \\
    &\leq \int_{\pi/2 - \delta}^{\pi/2 + \delta} |S_{2n}(e^{it})|^2 w_\mathbb{T}(e^{it}) dt + O(q^n) \\
    & \leq \lambda_{2n}(\mu_\mathbb{T},e^{i\pi/2})+ O(q^n)
\end{align*}
and similarly,
\[ \int_{-\pi/2 - \delta}^{-\pi/2 + \delta}|P_n(\cos t)|^2 w_\mathbb{T}(e^{it}) dt \leq \lambda_{2n}(\mu_\mathbb{T},e^{i\pi/2})+ O(q^n) .\]
 Combining these estimates with (\ref{circle_lower_1}), we can conclude
\[
    \lambda_{\deg(P_n)}(\mu_{[-1,1]},0) \leq \lambda_{2n}(\mu_\mathbb{T},e^{i\pi/2})
    + O(q^n) ,
\]
therefore
\begin{align*}
    \liminf_{n \to \infty} \deg(P_n)^{\alpha + 1} & \lambda_{\deg(P_n)}(\mu_{[-1,1]},0) \leq \liminf_{n \to \infty} (n + \lfloor \eta n \rfloor)^{\alpha + 1} \big( \lambda_{2n}(\mu_\mathbb{T},e^{i\pi/2})+ O(q^n)  \big)  \\
    & \leq \liminf_{n \to \infty} (1 + \lfloor \eta n \rfloor /n)^{\alpha + 1}\frac{1}{2^{\alpha+1}} (2n)^{\alpha + 1} \lambda_{2n}(\mu_\mathbb{T},e^{i\pi/2}).
\end{align*}
From this, in view of  Proposition \ref{model_2} and (\ref{liminf}), it follows that
\begin{align*}
 (1+\eta)^{-(\a+1)}   2^{\alpha + 1}L_\a \le \liminf_{n \to \infty} \lambda_n(\mu_{\mathbb{T}}, e^{i\pi/2}),
\end{align*}
and upon letting $ \eta\to 0 $ we obtain
\be
    2^{\alpha + 1}L_\a \leq \liminf_{n \to \infty} \lambda_n(\mu_{\mathbb{T}}, e^{i\pi/2}).
\label{llll}\ee
This and (\ref{lll}) verify (\ref{mt}).
\bigskip

Finally, let
\[ d\mu_\a(e^{it}) = |e^{it} - i|^{\alpha} dt.\]
Let us write
$|e^{it} - i|^{\alpha}$ in the form
\[|e^{it} - i|^{\alpha}=w(e^{it}) w_\mathbb{T}(e^{it}).  \]
Then $w$ is continuous in a neighborhood of $e^{i\pi/2}$
and it has value 1 at $e^{i\pi/2}$.
Let $ \tau > 0 $ be arbitrary, and choose $ 0<\delta <1 $ in such a way that
\[
    \frac{1}{1+\tau}\leq w(e^{it}) \leq (1+\tau) , \qquad t \in [\pi/2 - \delta, \pi/2 + \delta].
\]
If we now carry out the preceding arguments with this $\d$ and with this $\mu_\a$ replacing
everywhere $\mu_{\mathbb{T}}$, then we get that in (\ref{lll}) the limsup is
at most $(1+\tau)2^{\alpha + 1}L_\a$,
while in (\ref{llll}) the liminf is at least  $(1+\tau)^{-1}2^{\alpha + 1}L_\a$.
Since $\tau>0$ can be arbitrarily chosen, this shows that
\begin{equation}\label{unit_circle_result}
    \lim_{n \to \infty} n^{\alpha + 1} \lambda_n(\mu_\a, e^{i\pi/2})  = 2^{\a+1}L_\a.
    \end{equation}
This result will serve as our model case in the proof of Theorem \ref{thmain}.

\sect{Lemniscates}
In this section, we prove Theorem \ref{thmain} for lemniscates.

Let \( \sigma = \{ z \in \mathbb{C}: |T_N(z)| = 1\} \) be the level line of a polynomial \( T_N \), and assume that \( \sigma \) has no self-intersections. Let \( \deg(T_N) = N \).

The normal derivative of the Green's function with pole at infinity
of the outer domain to $\s$ at a point
$z\in \s$ is (see \cite[(2.2)]{Totiktrans}) $|T_N'(z)|/N$,
and since this normal derivative is $2\pi$-times the equilibrium density
of $\s$ (see \cite[II.(4.1)]{Nevanlinna}
or \cite[Theorem IV.2.3]{SaffTotik} and \cite[(I.4.8)]{SaffTotik}), it follows that
the equilibrium density on $\s$ has the form
\be \o_\s(z)=\frac{|T_N'(z)|}{2\pi N}.\label{so}\ee

If $z\in \s$, then there are $n$ points $z_1,\ldots,z_n\in \s$
with the property $T_N(z)=T_n(z_k)$, and for them (see \cite[(2.12)]{Totiktrans})
\begin{equation}\label{lemniscate_integral_2}
    \int_{\sigma} \Big( \sum_{i=1}^{N} f(z_i) \Big) |T_{N}^{'}(z)| ds_\sigma(z) = N \int_\sigma f(z)|T_{N}^{'}(z)| ds_\sigma(z).
\end{equation}
Furthermore, if \( g:\mathbb{T} \to \mathbb{C} \) is arbitrary, then
 (see \cite[(2.14)]{Totiktrans})
\begin{equation}\label{lemniscate_integral_3}
    \int_\sigma g(T_N(z)) |T_{N}^{'}(z)| ds_\sigma(z) = N \int_{0}^{2\pi} g(e^{it}) dt.
\end{equation}

Let $z_0\in \s$ be arbitrary, and define the measure
\begin{equation}\label{lemniscate_measure}
    d\mu_{\sigma}(z) = |z - z_0|^\alpha ds_{\sigma}(z), \quad \alpha > -1,
\end{equation}
where $s_\s$ denotes the arc measure on $\s$.
Without loss of generality we may assume that \( T_N(z_0) = e^{i\pi/2} \).
Our plan is to compare the Chritoffel functions for
the measure
$ \mu_\sigma $ with that for the measure $ \mu_\a $ which is supported on the unit circle and is defined via
\begin{equation}\label{lemniscate_mu_t}
    d\mu_\a(e^{it}) = |e^{it} - e^{i\pi/2}|^{\alpha} ds_{\mathbb{T}}(e^{it}),
\end{equation}
and for which the asymptotics of the Christoffel function was calculated in
(\ref{unit_circle_result}).

We shall prove that
\begin{equation}\label{lemniscate_result}
    \lim_{n \to \infty} n^{\alpha + 1} \lambda_n(\mu_\sigma, z_0) = \frac{L_\a}{(\pi \omega_\s(z_0))^{\alpha + 1}}
\end{equation}
where $L_\a$ is taken from (\ref{lalpha}).

\subsection{The upper estimate} Let $ \eta > 0 $ be an arbitrary small number, and select a $ \delta > 0 $ such that for every $ z $ with $ |z-z_0| < \delta $, we have
\begin{equation}\label{lemniscate_upper_estimate_separation}
\begin{aligned}
  \frac1{1+\eta}  |T_{N}^{'}(z_0)| & \leq  |T_{N}^{'}(z)| \le (1+\eta)  |T_{N}^{'}(z_0)|\\
 \frac{1}{1+\eta}  |T_{N}^{'}(z_0)||z-z_0| & \leq |T_N(z) - T_N(z_0)|
 \le (1+\eta)  |T_{N}^{'}(z_0)||z-z_0|
\end{aligned}
\end{equation}
(note that $T_N'(z_0)\not= 0$ because $\s$ has no self-intersections).
Let $ Q_n $ be the extremal polynomial for $ \lambda_n(\mu_\a , e^{i\pi/2}) $, where $ \mu_\a  $ is from (\ref{lemniscate_mu_t}). Define $ R_n $ as
\[
    R_n(z) = Q_n(T_N(z)) S_{n,z_0,L}(z),
\]
where $ S_{n, z_0, L} $ is the fast decreasing polynomial given by Corollary \ref{fdp_beta}
 for  the lemniscate set $L$ enclosed by $ \sigma $
(and for any fixed $0<\tau <1$ in Corollary \ref{fdp_beta}).
Note that $ R_n $ is a polynomial of degree $ nN + o(n) $ with $ R_n(z_0) = 1 $. Since $ S_{n, z_0, L} $ is fast decreasing, we have
\[
    \sup_{z \in L \setminus \{ z: |z-z_0| < \delta \}} |S_{n,z_0,L}(z)| = O(q^{n^{\tau_0}})
\]
for some $ q < 1 $ and $  \tau_0>0 $. The  Nikolskii-type inequality
in Lemma \ref{Niklemma} when applied to two subarcs of $\mathbb{T}$
which contain the upper resp. lower part of the unit circle, yields
\[
    \| Q_n \|_{\mathbb{T}} \leq C n^{(1+|\alpha|)/2} \| Q_n \|_{L^2(\mu_\a )} \leq Cn^{(1+|\a|)/2}.
\]
Therefore,
\[
    \sup_{z \in L \setminus \{ z: |z-z_0| < \delta \}} |R_n(z)| = O(q^{n^{\tau_0 / 2}}).
\]
It follows that
\begin{equation}\label{lemniscate_upper_1}
    \int_{|z-z_0| \geq \delta} |R_n(z)|^2 |z-z_0|^\alpha ds_{\sigma}(z) = O(q^{n^{\tau_0 /2}}).
\end{equation}
Using (\ref{lemniscate_upper_estimate_separation}), we have
\begin{align*}
    \int_{|z-z_0| < \delta} |R_n(z)|^2 & |z-z_0|^\alpha ds_\sigma(z) \\
    & \leq \int_{|z-z_0| < \delta} |Q_n(T_N(z))|^2 |z-z_0|^\alpha ds_\sigma(z) \\
    & \leq \frac{(1+\eta)^{|\alpha| + 1}}{|T_{N}^{'}(z_0)|^{\alpha + 1}} \int_{|z-z_0| < \delta} |Q_n(T_N(z))|^2 |T_N(z) - T_N(z_0)|^\alpha |T_{N}^{'}(z)| ds_\sigma(z) \\
    & \leq \frac{(1+\eta)^{|\alpha| + 1}}{|T_{N}^{'}(z_0)|^{\alpha + 1}} \int_{0}^{2\pi} |Q_n(e^{it})|^2 |e^{it} - e^{i\pi/2}|^\alpha dt \\
    & = (1+\eta)^{|\alpha| + 1} \frac{\lambda_n(\mu_\a , e^{i\pi/2})}{|T_{N}^{'}(z_0)|^{\alpha + 1}}.
\end{align*}
This and (\ref{lemniscate_upper_1}) imply
\[
    \lambda_{\deg(R_n)}(\mu_\sigma, z_0) \leq (1+\eta)^{|\alpha| + 1} \frac{\lambda_n(\mu_\a , e^{i\pi/2})}{|T_{N}^{'}(e^{i\pi/2})|^{\alpha + 1}} +O(q^{n^{\tau_0/2}}),
\]
from which
\begin{align*}
    \limsup_{n \to \infty} \deg(R_n)^{\alpha + 1} & \lambda_{\deg(R_n)}(\mu_\sigma, z_0) \\
    & \leq \limsup_{n \to \infty} (nN + o(n))^{\alpha + 1} (1+\eta)^{|\alpha| + 1} \frac{\lambda_n(\mu_\a , e^{i\pi/2})}{|T_{N}^{'}(z_0)|^{\alpha + 1}} \\
    & = (1+\eta)^{|\alpha| + 1} \frac{N^{\alpha + 1}}{|T_{N}^{'}(z_0)|^{\alpha + 1}} 2^{\alpha + 1}L_\a,
\end{align*}
where we used (\ref{unit_circle_result}).
Since $ \eta > 0 $ is arbitrary, we obtain from (\ref{so}) (use also (\ref{limsup}))
\be
    \limsup_{n \to \infty} n^{\alpha + 1} \lambda_n(\mu_\sigma, z_0) \leq \frac{N^{\alpha + 1}}{|T_{N}^{'}(z_0)|^{\alpha + 1}} 2^{\alpha + 1}L_\a
=  \frac{L_\a}{(\pi \omega_\s(z_0))^{\alpha + 1}} .\label{eer}\ee

\subsection{The lower estimate } Let $ P_n $ be the extremal polynomial for $ \lambda_n(\mu_\sigma, z_0) $, and let $ S_{n,z_0,L} $ be the fast decreasing polynomial given by Corollary \ref{fdp_beta}
for the closed lemniscate domain $L$
enclosed by $\s$ (with some fixed $\tau<1$).
As before, we obtain from Lemma \ref{Niklemma}
\be \|P_n\|_\s=O(n^{(1+|\a|)/2}).\label{pmn}\ee
Define $ R_n(z) = P_n(z) S_{n,z_0,L}(z) $. $ R_n $ is a polynomial of degree $ n + o(n) $ and $ R_n(z_0) = 1 $. Similarly to the previous section, we have
\begin{equation}\label{lemniscate_lower_rn_sup}
    \sup_{z \in L \setminus \{ z: |z-z_0| < \delta \}} |R_n(z)| = O(q^{n^{\tau_0/2}})
\end{equation}
for some $ q < 1 $ and $  \tau_0 >0$. Since the expression $ \sum_{k=1}^{N} R_n(z_k) $, where $ \{ z_1, \dots, z_N \} = T_{N}^{-1}(T_N(z)) $, is symmetric in the variables
$ z_k $, it is a sum of their elementary symmetric polynomials. For more details on this idea, see \cite{T2}.
Therefore,
there is a polynomial $ Q_n $ of degree at most $ \deg(R_n)/N = (n + o(n))/N $ such that
\[
    Q_n(T_N(z)) = \sum_{k=1}^{N} R_n(z_k), \qquad z \in \sigma.
\]

We claim that for every $ z \in \sigma$, we have
\begin{equation}\label{lemniscate_lower_qn}
    |Q_n(T_N(z))|^2 \leq \sum_{k=1}^{N} |R_n(z_k)|^2 + O(q^{n^{\tau_0/2}}).
\end{equation}
Indeed, since $ \sigma $ has no self intersection, $ |z_k - z_l| $ cannot be arbitrarily small for distinct $ k $ and $ l $. As a consequence, for every $z$ at most
one $z_j$ belongs to the set $\{z\sep |z-z_0|<\d\}$ if $\d$ is sufficiently small,
and hence, in the sum
\[
    |Q_n(T_N(z))|^2 \leq \sum_{k=1}^{N} \sum_{l=1}^{N} |R_n(z_k)||R_n(z_l)|,
\]
every term with $ k \neq l $ is $ O(q^{n^{\tau_0/2}}) $ (use (\ref{pmn})
and (\ref{lemniscate_lower_rn_sup})).

Now let $ \delta > 0 $ be so small that for every $ z $ with $ |z-z_0| < \delta $
the inequalities in (\ref{lemniscate_upper_estimate_separation}) hold.
 Then (\ref{lemniscate_integral_2}) and (\ref{lemniscate_lower_qn}) give
 (note that $T_N(z)=T_N(z_k)$ for all $k$)
\begin{align*}
    \int_\sigma |Q_n(T_N(z))|^2 & |T_{N}^{'}(z)| |T_N(z) - T_N(z_0)|^\alpha ds_\sigma(z) \\
    & \leq O(q^{n^{\tau_0/2}}) + \int_\sigma \left(\sum_{k=1}^{N} |R_n(z_k)|^2\right) |T_{N}^{'}(z)| |T_N(z) - T_N(z_0)|^{\alpha} ds_\sigma(z) \\
    & = O(q^{n^{\tau_0/2}}) + \int_\sigma \left(\sum_{k=1}^{N} |R_n(z_k)|^2
    |T_N(z_k) - T_N(z_0)|^{\alpha}\right) |T_{N}^{'}(z)|  ds_\sigma(z) \\
    & = O(q^{n^{\tau_0/2}})  + N \int_\sigma |R_n(z)|^2|T_N(z) - T_N(z_0)|^\alpha  |T_{N}^{'}(z)| ds_\sigma(z) \\
    & \leq O(q^{n^{\tau_0/2}}) + (1+\eta)^{|\alpha| + 1} |T_{N}^{'}(z_0)|^{\alpha + 1} N \int_{|z-z_0| < \delta} |P_n(z)|^2 |z - z_0|^{\alpha} ds_\sigma \\
    & \leq O(q^{n^{\tau_0/2}}) + (1+\eta)^{|\alpha| + 1} |T_{N}^{'}(z_0)|^{\alpha + 1} N \lambda_n(\mu_\sigma, z_0).
\end{align*}
Since $ Q_n(T_N(z_0)) = 1 + o(1) $, we get from (\ref{lemniscate_integral_3})
\begin{align*}
    \int_\sigma |Q_n(T_N(z))|^2 & |T_{N}^{'}(z)| |T_N(z) - T_N(z_0)|^{\alpha} ds_\sigma(z) \\
    & = N \int_{0}^{2\pi} |Q_n(e^{it})|^2|e^{it} - e^{i\pi/2}|^{\alpha} dt \\
    & \geq (1+o(1)) N \lambda_{\deg(Q_n)} (\mu_\a , e^{i\pi/2}).
\end{align*}
Hence, the inequality
\[
    (1+o(1)) \lambda_{\deg(Q_n)} (\mu_\a , e^{i\pi/2}) \leq
    O(q^{n^{\tau_0/2}}) + (1+\eta)^{|\alpha| + 1} |T_{N}^{'}(z_0)|^{\alpha + 1} \lambda_n(\mu_\sigma, z_0)
\]
holds. Using  that $ \deg(Q_n) \leq (n + o(n))/N $, we can conclude
\begin{align*}
    \liminf_{n \to \infty} \deg(Q_n)^{\alpha + 1} & \lambda_{\deg(Q_n)}(\mu_\a , e^{i\pi/2}) \\
    & \leq (1+\eta)^{|\alpha| + 1} |T_{N}^{'}(z_0)|^{\alpha + 1} \liminf_{n \to \infty} \Big( \frac{n + o(n)}{N} \Big)^{\alpha + 1} \lambda_n(\mu_\sigma, z_0) \\
    & \leq (1 + \eta)^{|\alpha| + 1} \frac{|T_{N}^{'}(z_0)|^{\alpha + 1}}{N^{\alpha + 1}} \liminf_{n \to \infty} n^{\alpha + 1} \lambda_n(\mu_\sigma, z_0).
\end{align*}
Since  $ \eta > 0 $ is arbitrary, we obtain
again from (\ref{unit_circle_result}) and (\ref{so})
\[
    \frac{L_\a}{(\pi \omega_\s(z_0))^{\alpha + 1}}  \leq \liminf_{n \to \infty} n^{\alpha + 1} \lambda_n(\mu_\sigma, z_0),
\]
which, along with (\ref{eer}), proves (\ref{lemniscate_result}).

\sect{Smooth Jordan curves}
In this section, we verify Theorem \ref{thmain} for a finite union
$\G$ of smooth Jordan curves and for a measure
\begin{equation}\label{general_mu_def}
    d\mu(z) = w(z) |z-z_0|^{\alpha} ds_\G(z),
\end{equation}
where $s_\G$ is the arc measure on $\G$.
Recall that a Jordan curve is a homeomorhic image
of a circle, while a Jordan arc is a homeomorhic image of a segment.
From the point of view of our technique there is a
big difference between arcs and curves, and in the
present section we shall only work with Jordan curves.

Let $ \Gamma $ be a finite system of $C^2$ Jordan curves lying exterior to each other
and let $ \mu $ be a measure on $ \Gamma $
given in (\ref{general_mu_def}),
where $ w $ is a continuous and strictly positive function. Our goal is to prove that
\begin{equation}\label{main_result}
    \lim_{n \to \infty} n^{\alpha + 1} \lambda_n(\mu, z_0) =  \frac{w(z_0)}{(\pi \omega_{\G}(z_0))^{\alpha + 1}}L_\a
\end{equation}
with $L_\a$ from (\ref{lalpha}).
We shall deduce this from the result for lemniscates proved in the
preceding section.

We will approximate $ \Gamma $ with lemniscates using the following theorem, which was proven  in \cite{NT}.
\begin{prop}\label{lemniscate_approximation}
    Let $ \Gamma $ consist of finitely many Jordan curves lying exterior to each other,
     let $ P \in \Gamma $, and assume that in a neighborhood of $ P $ the curve $\G$ is
     $ C^2 $-smooth. Then, for every $ \varepsilon > 0 $, there is a lemniscate $ \s=\sigma_P $ consisting of Jordan curves such that $ \sigma $ touches $ \Gamma $ at $ P $, $ \sigma $ contains $ \G $ in its interior except for the point $ P $, every component of $ \sigma $ contains
in its interior precisely one component of $ \Gamma $, and
\begin{equation}\label{lemniscate_theorem_exterior}
\o_\G(P)\le \o_\s(P)+\e.
\end{equation}

Also, for every $ \varepsilon > 0 $, there exists another
 lemniscate $ \sigma=\s_P $ consisting of Jordan curves such that $ \sigma $ touches $ \Gamma $ at $ P $, $ \sigma $ lies strictly inside $\G $ except for the point $ P $, $ \sigma $ has exactly one component lying inside every component of $ \Gamma $, and
\begin{equation}\label{lemniscate_theorem_interior}
\o_\s(P)\le \o_\G(P)+\e.
\end{equation}
\end{prop}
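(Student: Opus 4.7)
The proposition is a sharp form of Hilbert's lemniscate theorem, demanding not just that the lemniscate be close to $\G$ but that it actually touch $\G$ at the prescribed point $P$ and have equilibrium density there close to $\omega_\G(P)$ from the correct side. The plan is to construct the polynomial $T_N$ in two stages: a global Walsh-type approximation of the relevant Green's function, followed by a local perturbation at $P$ that forces the lemniscate to become tangent to $\G$ there.

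For the outer lemniscate, let $g_\Omega(\cdot,\infty)$ be the Green's function of the unbounded component $\Omega$ of $\overline{\mathbb{C}}\setminus\G$ with pole at infinity. By the classical Walsh polynomial approximation of Green's functions, one can produce polynomials $T_N$ of degree $N$, with zeros distributed in the bounded components of $\overline{\mathbb{C}}\setminus\G$ so that each component receives a number of zeros proportional to its equilibrium mass, and $\tfrac1N\log|T_N(z)|\to g_\Omega(z,\infty)$ uniformly on compact subsets of $\Omega$. A Cauchy-type estimate for the harmonic conjugate on slightly larger compacta upgrades this to $C^1$-convergence in a neighborhood of $P$; in particular $|T_N'(P)|/N\to (\partial g_\Omega/\partial n_+)(P)=2\pi\omega_\G(P)$ where $n_+$ is the outer normal at $P$. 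For any fixed small $\delta>0$ and large $N$, the level set $\{|T_N|=e^{N\delta}\}$ consists of finitely many Jordan curves, one enclosing each component of $\G$ in its interior, all lying strictly in $\Omega$.

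The resulting lemniscate, however, need not touch $\G$ at $P$. The second step is to multiply $T_N$ by a correction factor $B_N$ of degree $o(N)$, designed as a product of a fast decreasing polynomial from Corollary \ref{fdp_beta} (to keep $B_N$ essentially $1$ away from $P$) and a low-degree polynomial localized at $P$ that adjusts $|T_NB_N|(P)$ to equal the chosen threshold and aligns the normal to the lemniscate at $P$ with that of $\G$. Because $B_N$ is concentrated near $P$ and of degree $o(N)$, the global shape of the lemniscate is preserved, so each component of the modified lemniscate $\s$ still encloses precisely one component of $\G$ and $\s\setminus\{P\}\subset\Omega$, while near $P$ the lemniscate is gently pulled down to become tangent to $\G$ at $P$ from outside. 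By (\ref{so}), $\omega_\s(P)=|(T_NB_N)'(P)|/(2\pi\deg(T_NB_N))$; since $|T_N'(P)|/N\to2\pi\omega_\G(P)$ and the contribution of $B_N$ to the normal derivative at $P$ can be made arbitrarily small by shrinking the support of $B_N$, we get $\omega_\G(P)\leq\omega_\s(P)+\varepsilon$, which is (\ref{lemniscate_theorem_exterior}). The inner case is handled analogously: apply Walsh approximation to the Green's functions of each bounded component of $\overline{\mathbb{C}}\setminus\G$, choose zeros appropriately so that a sub-level set $\{|T_N|\leq r\}$ intersects each bounded component in a set whose boundary approximates $\G$ from inside, and then use the same localized modification at $P$ to force tangency from within.

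The main obstacle is the second step: producing genuine tangential contact at $P$ from the prescribed side without destroying the global topology (one Jordan-curve component of the lemniscate enclosing each component of $\G$) and without introducing spurious components near $P$. This requires delicate control of both the zeros and the critical values of $T_NB_N$ in a neighborhood of $P$, and the construction must be carried out so that the $C^1$-error of the Walsh approximation at $P$ and the contribution of $B_N$ to $|(T_NB_N)'(P)|/\deg(T_NB_N)$ are both $o(1)$, and in particular smaller than the prescribed $\varepsilon$.
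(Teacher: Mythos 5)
The paper does not give its own proof of this proposition; it is quoted directly from Nagy--Totik \cite{NT} (``Sharpening of Hilbert's lemniscate theorem''), and the only commentary added here is the observation that the formulation in \cite{NT} in terms of normal derivatives of Green's functions is equivalent to the present one in terms of equilibrium densities, via $\partial g_\O/\partial{\bf n}=2\pi\o_\G$. So there is no ``paper's own proof'' to match against --- what you have written is an outline of one possible route to the cited result.

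As an outline it is plausible in spirit, but the crucial step is not actually carried out, and you acknowledge this yourself (``The main obstacle is the second step\ldots''). Concretely: (a) the claimed $C^1$-convergence of $\tfrac1N\log|T_N|$ to $g_\O$ \emph{at the boundary point} $P$ does not follow from locally uniform convergence in the interior of $\O$; one needs both that the zeros of $T_N$ stay away from a fixed neighborhood of $P$ and boundary regularity of $g_\O$ up to the $C^2$ arc, and neither is established. (b) The described role of the fast-decreasing factor is inverted: the polynomials of Proposition \ref{fdp} and Corollary \ref{fdp_beta} equal $1$ at $P$ and are $\le 1$ on $K$, so they do not ``keep $B_N$ essentially $1$ away from $P$''; rather they pull everything down away from $P$. (c) Most seriously, to guarantee that $\s=\{|T_NB_N|=|T_N(P)B_N(P)|\}$ contains $\G\setminus\{P\}$ in its interior one must show $|T_N(z)B_N(z)|<|T_N(P)B_N(P)|$ for every $z\ne P$ in the polynomial convex hull of $\G$. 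Near $P$ on $\G$ the ratio $|T_N(z)/T_N(P)|$ can grow like $e^{cN|z-P|}$ (by Bernstein on the boundary), whereas a correction factor of degree $o(N)$ with fast decrease of the type $e^{-c\,m|z-P|^\g}$, $\g>1$, loses against this for $|z-P|$ small: there the boundary oscillation of $T_N$ dominates. So the local correction as described cannot by itself produce one-sided tangency, and without additional ideas the Jordan-curve structure of $\s$ near $P$ and the inequality $\o_\G(P)\le\o_\s(P)+\e$ are not secured. The same issues appear, unaddressed, in the sketch of the inner case. In short, the proposal reproduces the statement's difficulty rather than resolving it; the actual argument requires the much more careful construction carried out in \cite{NT}.
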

Of course, the phrase ``$\G$ lies inside $\s$" means that the components of
$\G$ lie inside (i.e. in the interior of) the corresponding components of $\s$.
See Figure \ref{figlemnisc}.

\begin{figure}
\centering
\includegraphics[scale=0.7]{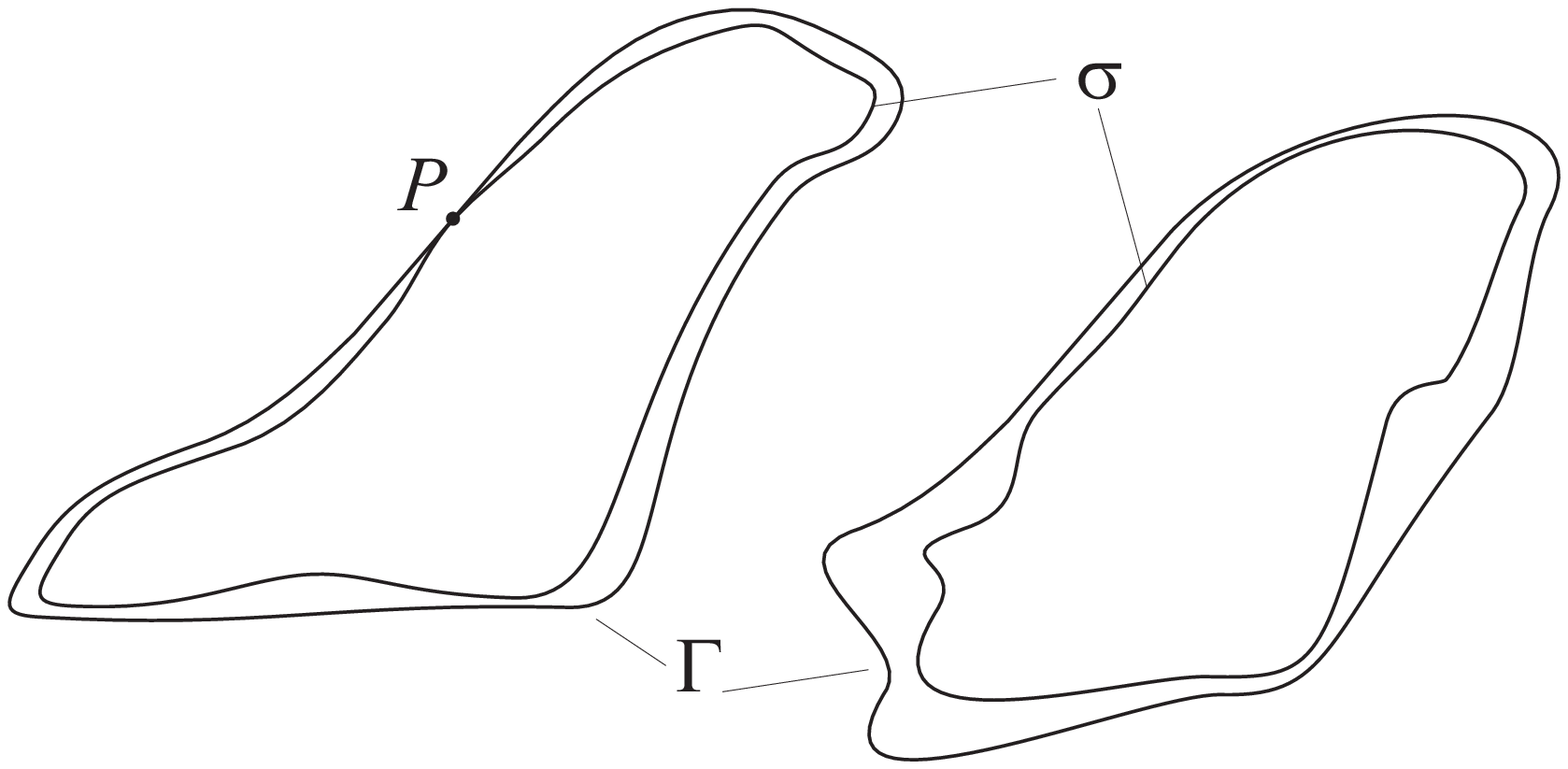}
\caption{\protect\label{figlemnisc} The $\G$ and the leminscate
$\s$ as in the second half of Proposition \ref{lemniscate_approximation}}
\end{figure}

Note that in (\ref{lemniscate_theorem_exterior}) the inequality
$\o_\s(P)\le \o_\G(P)$ is automatic since $\G$ lies inside $\s$. In a similar
way, in (\ref{lemniscate_theorem_interior}) the inequality
$\o_\G(P)\le \o_\s(P)$ holds.

Actually, in \cite{NT} the conditions (\ref{lemniscate_theorem_exterior})
and (\ref{lemniscate_theorem_interior}) were formulated in terms of the
normal derivatives of the Green's function of the outer domains to $\G$ and $\s$,
but, in view of the fact that this latter is just
$2\pi$-times the equilibrium density (see \cite[II.(4.1)]{Nevanlinna} or \cite[Theorem IV.2.3]{SaffTotik} and \cite[(I.4.8)]{SaffTotik}), the two formulations are
equivalent.

\subsection{The lower estimate} Let $ P_n $ be the extremal polynomial for $ \lambda_n(\mu, z_0) $, and for some $\tau>0$ let $ S_{\tau n, z_0, K} $ be the fast decreasing polynomial given by Proposition
\ref{fdp} with some $\g>1$ to be chosen below, where $ K $ is the set enclosed by $ \Gamma $.
Let $ \s=\s_{z_0} $ be a lemniscate inside $ \Gamma $ given by the second part of Proposition  \ref{lemniscate_approximation}, and suppose that $ \s  = \{ z: |T_N(z)| = 1 \} $, where $ T_N $ is a polynomial of degree $ N $ and $ T_N(z_0) = e^{i\pi/2} $. Define
$ R_n  = P_n S_{\tau n,z_0,K}$.
Note that $ R_n $ is a polynomial of degree at most $ (1+\tau)n $ and $ R_n(z_0) = 1 $.
These will be the test polynomials in estimating
the Christoffel function for the measure
\[d\mu_\s(z):=|z-z_0|^\a d s_\s(z)\]
 on $\s$, but first we need two nontrivial facts for these polynomials.

\begin{lem}\label{general_main_lemma} Let $\frac12<\b<1$ be fixed. For
$ z \in \G $ such that $ |z-z_0| \leq 2n^{-\beta} $, let $ z^* \in \s  $ be
the point such that $ s_\s([z_0,z^*]) = s_\Gamma([z_0,z]) $ holds
(actually, there are two such points, we choose as $z^*$ the one
the lies closer to $z$).
Then the mapping $ q(z) = z^* $ is one to one, $ |q(z) - z| \leq C |z-z_0|^2 $, $ ds_\G(z) = ds_{\s }(z^*) $, $|q'(z_0)| = 1$,
and with the notation $ I_n := \{ z^* \in \s : |z^* - z_0| \leq n^{-\beta} \} $, we have
\begin{equation}\label{general_integral_estimate}
    \Bigg| \int_{z^* \in I_n} |R_n(z^*)|^2 |z - z_0|^\alpha ds_{\s }(z^*) - \int_{z^* \in I_n} |R_n(z)|^2 |z-z_0|^\alpha ds_\G(z) \Bigg| = o(n^{-(1 + \alpha)}).
\end{equation}
\end{lem}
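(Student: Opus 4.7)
The plan splits into two parts: the geometric claims, and the integral estimate.

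\textbf{Geometric claims.} Since by Proposition \ref{lemniscate_approximation} the lemniscate $\s$ touches $\G$ at $z_0$, the two $C^2$ curves share a common tangent there. I would parameterize each side of $z_0$ by arc length $s$, obtaining $C^2$ maps $\gamma_\G(s)$ and $\gamma_\s(s)$ with $\gamma(0)=z_0$, $|\gamma'(0)|=1$, and $\gamma_\G'(0)=\gamma_\s'(0)$. A second-order Taylor expansion then yields $|\gamma_\s(s)-\gamma_\G(s)|=O(s^2)$, and since $s\sim|z-z_0|$ on each curve, $|q(z)-z|\le C|z-z_0|^2$ follows. The map $q$ is one-to-one on each side by construction, $ds_\G(z)=ds_\s(z^*)$ is built into its definition, and $|q'(z_0)|=1$ because the unit tangents agree at $z_0$.

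\textbf{Integral estimate.} The starting point is the pointwise identity
\[
\big||R_n(z^*)|^2-|R_n(z)|^2\big|\le|R_n(z^*)-R_n(z)|\cdot\big(|R_n(z^*)|+|R_n(z)|\big).
\]
Since $|z^*-z|\le C|z-z_0|^2\le Cn^{-2\beta}$ and $\beta>1/2$, the segment $[z,z^*]$ lies in a $D/n$-neighborhood of a fixed $C^2$-subarc $J\subset\G$ containing $z_0$ in its interior, so Bernstein's inequality (Lemma \ref{bernstein}) gives $|R_n(z^*)-R_n(z)|\le Cn^{1-2\beta}\|R_n\|_J$. To control $\|R_n\|_J$, I would combine Lemma \ref{Niklemma} with the a priori bound $\int|P_n|^2\,d\mu=\lambda_n(\mu,z_0)=O(n^{-\alpha-1})$ from Lemma \ref{a_priori_estimate} (using $|S_{\tau n,z_0,K}|\le 1$ on $\G$), obtaining $\|R_n\|_J\le\|P_n\|_J\le Cn^{\max(0,-\alpha/2)}$. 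Bounding $\int_{I_n}(|R_n(z^*)|+|R_n(z)|)|z-z_0|^\alpha ds$ by $2\|R_n\|_J\int_{I_n}|z-z_0|^\alpha ds=O(\|R_n\|_J\cdot n^{-\beta(\alpha+1)})$, the full error becomes
\[
O\big(n^{1-2\beta}\,\|R_n\|_J^2\,n^{-\beta(\alpha+1)}\big)=O\big(n^{1-\beta(\alpha+3)+2\max(0,-\alpha/2)}\big).
\]

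\textbf{Main obstacle.} The subtle point is forcing this last exponent strictly below $-\alpha-1$ for all $\alpha>-1$: the crude Nikolskii bound $\|P_n\|_J\le Cn^{(1+|\alpha|)/2}$ is not sharp enough, so the improvement coming from the extremal property of $P_n$ (i.e.\ that $\|P_n\|_{L^2(\mu)}^2=\lambda_n(\mu,z_0)$ is itself of size $n^{-\alpha-1}$) is essential. With this sharper sup bound in hand, picking $\beta\in(1/2,1)$ close enough to $1$ (the direct computation yields the thresholds $\beta>(\alpha+2)/(\alpha+3)$ when $\alpha\ge 0$ and $\beta>2/(\alpha+3)$ when $-1<\alpha<0$, both strictly less than $1$) makes the error $o(n^{-\alpha-1})$, as required.
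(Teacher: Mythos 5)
Your proof is correct, and it reaches the same conclusion via a slightly different decomposition than the paper.

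The paper estimates $A:=\int_{I_n}\bigl||R_n(z^*)|^2-|R_n(z)|^2\bigr||z-z_0|^\alpha ds$ by Cauchy--Schwarz followed by Minkowski, so that the factor $\bigl(\int_{I_n}|R_n(z)|^2|z-z_0|^\alpha ds\bigr)^{1/2}$ is controlled directly by the a priori $L^2$ bound $\lambda_n(\mu,z_0)^{1/2}=O(n^{-(\alpha+1)/2})$ from Lemma~\ref{a_priori_estimate}, while the term involving $|R_n(z^*)|$ is handled by a self-referential inequality of the form $A\le Cn^{\kappa}\bigl(A^{1/2}+n^{-(\alpha+1)/2}\bigr)$. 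You instead use the pointwise factorization $\bigl||R_n(z^*)|^2-|R_n(z)|^2\bigr|\le|R_n(z^*)-R_n(z)|\,(|R_n(z^*)|+|R_n(z)|)$ and bound \emph{both} factors in the sup norm. Your argument is cleaner (no self-referential step), but it converts the $L^2$ information into $L^\infty$ twice, which is lossier: your thresholds $\beta>\tfrac{\alpha+2}{\alpha+3}$ (resp.\ $\beta>\tfrac{2}{\alpha+3}$) are strictly larger than the paper's implicit thresholds $\beta>\tfrac{\alpha+3}{\alpha+5}$ (resp.\ $\beta>\tfrac{3}{\alpha+5}$). Since every one of these lies in $(1/2,1)$ and the lemma is only applied with $\beta$ close to $1$, the loss is harmless and your proof is fully adequate. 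You also correctly identify the crucial point that the crude Nikolskii bound $\|P_n\|_J\le Cn^{(1+|\alpha|)/2}$ would be fatal (it would force $\beta>1$ already at $\alpha=0$), and that it is the extremality of $P_n$, via $\|P_n\|_{L^2(\mu)}^2=\lambda_n(\mu,z_0)=O(n^{-\alpha-1})$, that rescues the argument; this is exactly the mechanism the paper relies on as well. Your treatment of the geometric claims via arc-length parametrization and second-order Taylor expansion at the point of tangency matches the paper's brief remark that $q(z)=z+O(|z-z_0|^2)$.
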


On the left-hand side $z=q^{-1}(z^*)$, so the integrand is a function of $z^*$.

\proof.
First of all we mention that $ |q'(z_0)| = 1 $, i.e. for every $ \varepsilon > 0 $, if $ |z-z_0| $ is small enough, then
\[
    1-\varepsilon \leq \frac{|q(z) - z_0|}{|z - z_0|} \leq 1 + \varepsilon,
\]
which is clear since $q(z)=z+O(|z-z_0|^2)$.

We proceed to prove (\ref{general_integral_estimate}).
\begin{align*}
    & \Bigg| \int_{z^* \in I_n} |R_n(z^*)|^2 |z-z_0|^\alpha ds_\s(z^*) - \int_{z^* \in I_n} |R_n(z)|^2 |z-z_0|^\alpha ds_\G(z)  \Bigg| \\
    & \leq \Bigg| \int_{z^* \in I_n} \Big( |R_n(z^*)|^2 - |R_n(z)|^2 \Big) |z-z_0|^{\alpha} ds_\G(z) \Bigg| \\
    & \leq \int_{z^* \in I_n} \Big| |R_n(z^*)|^2 - |R_n(z)|^2 \Big| |z-z_0|^{\alpha} ds_\G(z) :=A.
    \end{align*}
    Using the H\"older and Minkowski inequalities we can continue as
\bea
   A  & \leq& \Bigg( \int_{z^* \in I_n} |R_n(z^*) - R_n(z)|^2 |z-z_0|^{\alpha} ds_\G(z) \Bigg)^{1/2} \times \label{uv}\\
    & &\Bigg\{ \Bigg( \int_{z^* \in I_n} |R_n(z^*)|^2 |z-z_0|^\alpha ds_\G(z) \Bigg)^{1/2}  + \Bigg( \int_{z^* \in I_n} |R_n(z)|^2 |z-z_0|^\alpha ds_\G(z) \Bigg)^{1/2}  \Bigg\}.\nonumber \eea
We estimate these integrals term by term.

$ P_n $ is extremal for $ \lambda_n(\mu, z_0) = O(n^{-(\a+1 )}) $ (see Lemma \ref{a_priori_estimate}),
 therefore we have (use also that $|R_n(z)|\le |P_n(z)|$)
\be
     \Bigg( \int_{z^* \in I_n} |R_n(z)|^2 |z-z_0|^\alpha ds_\G(z) \Bigg)^{1/2} \leq
     C n^{-\frac{\a+1 }{2}}.
\label{1term}\ee
This takes care of the third term in (\ref{uv}).

The estimates for the other two terms
 differ in the cases $ \alpha \geq 0 $ and $ \alpha < 0 $.

Assume first that $ \alpha \geq 0 $.
From Lemma \ref{Niklemma}, we get for any closed subarc $J_1\subset J$
\[ \| R_n \|_{J_1} \leq C n^{(\a+1)/2} \| R_n \|_{L^2(\mu)} \leq C,\]
where we used Lemma \ref{a_priori_estimate} and $|R_n(z)|\le |P_n(z)|$.
Choose this  $J_1$
so that it contains $z_0$ in its interior. Next,
note that if $z^*\in I_n$,
then $|z^*-z|\le Cn^{-2\b}$, so dist$(z^*,z)\le C/n$.
Therefore,
an application of Lemma \ref{bernstein} yields for such $z$
\[
    \frac{|R_n(q(z)) - R_n(z)|}{|q(z) - z|} \leq C n \| R_n \|_{J_1},
\]
and so
\be
    |R_n(q(z)) - R_n(z)| \leq C n|q(z) - z| \leq C n^{1 - 2 \beta}.
\label{viva}\ee
Since $ s_{\s }(I_n) \leq C n^{-\beta} $ is also true, we have (recall that $z^*=q(z)$)
\bean
    \Bigg( \int_{z^* \in I_n} |R_n(z^*) - R_n(z)|^2 |z-z_0|^{\alpha} ds_\G(z) \Bigg)^{1/2} &\leq & C\Big(  n^{-\beta} n^{2-4\beta} n^{-\alpha \beta} \Big)^{1/2}\\
& =& C n^{1 - \frac{5+\alpha}{2}\beta}.\eean
This is the required estimate for the first term in (\ref{uv}).

Finally, for the middle term in (\ref{uv}), we have
\begin{align*}
    \Bigg( \int_{z^* \in I_n} |R_n(z^*)|^2 & |z-z_0|^\alpha ds_\G(z) \Bigg)^{1/2} \\
    & = \Bigg( \int_{z^* \in I_n} \Big| |R_n(z^*)|^2 - |R_n(z)|^2 + |R_n(z)|^2 \Big| |z-z_0|^{\alpha} ds_\G(z) \Bigg)^{1/2} \\
    & \leq \Bigg( \int_{z^* \in I_n} \Big| |R_n(z^*)|^2 - |R_n(z)|^2 \Big| |z-z_0|^{\alpha} ds_\G(z) \Bigg)^{1/2} \\
    & \phantom{aaaaaaaaaaaaa} + \Bigg( \int_{z^* \in I_n} |R_n(z)|^2 |z-z_0|^\alpha ds_\G(z) \Bigg)^{1/2} \\
    & \leq A^{1/2} + C n^{-\frac{\a+1 }{2}},
\end{align*}
where $A$ is the left-hand side in (\ref{uv}), and where we also used
(\ref{1term}).

 Combining these we get
\begin{align*}
    A & \leq C n^{1 - \frac{5+\alpha}{2}\beta} \Big( A^{1/2} + C n^{-\frac{\a+1 }{2}} \Big)
     \leq C A^{1/2} n^{1 - \frac{5+\alpha}{2}\beta} + C n^{\frac12 - \frac{\alpha}{2} - \frac{5 + \alpha}{2}\beta} \\
    & \leq C \max\{ A^{1/2} n^{1 - \frac{5+\alpha}{2}\beta}, n^{\frac12 - \frac{\alpha}{2} - \frac{5 + \alpha}{2}\beta} \}.
\end{align*}
Therefore $ A \leq C n^{2-(5+\alpha)\beta} $ or $ A \leq Cn^{\frac12 - \frac{\alpha}{2} - \frac{5 + \alpha}{2}\beta} $. If $ \beta <1$ is sufficiently close to $ 1 $, then both imply $ A = o(n^{-(\a+1 )}) $.

Now assume that $ \alpha < 0 $. From Lemma \ref{Niklemma}, we get for any closed subarc $J_1\subset J$
\[ \| R_n \|_{J_1} \le \| P_n \|_{J_1} \leq C n^{1/2} \| P_n \|_{L^2(\mu)} \leq C n^{-\alpha/2},\]
and we may assume that here $J_1$ is such that it contains
a neighborhood of $z_0$. Therefore, in this case (\ref{viva}) takes the form
\[
    |R_n(z^*) - R_n(z)| \leq C n^{1-\alpha/2 - 2 \beta}.
\]
Since
\[ \int_{z^* \in I_n} |z-z_0|^{\alpha} ds_\G(z) \leq C n^{-\alpha \beta - \beta} ,\]
 we obtain
\begin{align*}
    \Bigg( \int_{z^* \in I_n} |R_n(z^*) - R_n(z)|^2 & |z-z_0|^{\alpha} ds_\G(z) \Bigg)^{1/2} \leq C n^{1-\frac{\alpha}{2}-2\beta-\frac{(\alpha + 1) }{2}\beta},
\end{align*}
which is the required estimate for the first term in (\ref{uv}).
Finally, for the middle term in (\ref{uv}) we get, similarly as before,
\begin{align*}
    \Bigg( \int_{z^* \in I_n} |R_n(z^*)|^2 & |z-z_0|^\alpha ds_\G(z) \Bigg)^{1/2} \leq A^{1/2}  + C n^{-\frac{\a+1 }{2}}.
\end{align*}
As previously, we can conclude from these
\[A\le Cn^{1-\frac{\a}{2}-2\b-\frac{\a+1}{2}\b}(A^{1/2}+n^{-\frac{\a+1}{2}}),\]
which implies
\[
    A \leq C \max\{ n^{2 - \alpha - 4\beta - (\alpha + 1)\beta}, n^{\frac12
     -\a - 2\beta - \frac{\a+1 }{2}\beta} \}.
\]
If $ \beta $ is sufficiently close to $ 1 $, then this yields again
 $ A = o(n^{-(\a+1 )}) $, as needed.
\endproof

In what follows we keep the notations from the preceding proof.
In the following lemma let
$\D_\d(z_0)=\{z\sep |z-z_0|\le \d\}$ be the disk
about $z_0$ of radius $\d$.

Note that up to this point the $\g>1$ in Proposition \ref{fdp} was arbitrary. Now we
specify how close it should be to 1.
\begin{lem}\label{lemmaadded} If $0<\b<1$ is fixed and $\g>1$ is chosen
so that $\b\g<1$, then
\be \|R_n\|_{K\setm \D_{n^{-\b}/2}}(z_0)=o(n^{-1-\a}).\label{hhhfff}\ee
\end{lem}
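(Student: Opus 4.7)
The plan is to prove the bound by multiplying two estimates: a polynomial-size upper bound on $\|P_n\|_K$ coming from the Nikolskii-type inequality and the a priori estimate, and the super-polynomial decay of $S_{\tau n,z_0,K}$ away from $z_0$ given by Proposition \ref{fdp}.

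First I would bound $\|P_n\|_K$. Since $K$ is the set enclosed by $\Gamma$ and $P_n$ is a polynomial, the maximum principle gives $\|P_n\|_K=\|P_n\|_\Gamma$. Cover $\Gamma$ by finitely many closed subarcs $J_i$ of its component $C^2$ Jordan curves; on one of them, call it $J_1$, apply Lemma \ref{Niklemma} to the measure $d\mu=w(z)|z-z_0|^\alpha ds_\Gamma(z)$ (this is legitimate because of the final sentence of that lemma), while on the rest $|z-z_0|^\alpha$ is bounded away from $0$ and the standard Nikolskii inequality applies. This yields $\|P_n\|_\Gamma\le Cn^{(1+|\alpha|)/2}\|P_n\|_{L^2(\mu)}$. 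By Lemma \ref{a_priori_estimate}, $\|P_n\|_{L^2(\mu)}^2=\lambda_n(\mu,z_0)\le Cn^{-1-\alpha}$, and therefore
\[
\|P_n\|_K\;\le\;Cn^{(1+|\alpha|)/2}\,n^{-(1+\alpha)/2}\;\le\;Cn^A
\]
for some constant $A=A(\alpha)$ (namely $A=0$ when $\alpha\ge 0$ and $A=-\alpha$ when $-1<\alpha<0$).

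Next I would apply the fast decreasing estimate (\ref{kineq}) to $S_{\tau n,z_0,K}$ with the chosen $\gamma>1$. For any $z\in K$ with $|z-z_0|\ge n^{-\beta}/2$,
\[
|S_{\tau n,z_0,K}(z)|\;\le\;C_\gamma\exp\!\bigl(-c_\gamma\,\tau n\,|z-z_0|^{\gamma}\bigr)\;\le\;C_\gamma\exp\!\bigl(-c\,n^{1-\beta\gamma}\bigr),
\]
with a positive constant $c$. The assumption $\beta\gamma<1$ guarantees $1-\beta\gamma>0$, so this right-hand side decays faster than any power of $n$.

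Multiplying the two bounds, for $z\in K\setminus\Delta_{n^{-\beta}/2}(z_0)$,
\[
|R_n(z)|=|P_n(z)|\,|S_{\tau n,z_0,K}(z)|\;\le\;Cn^{A}\exp\!\bigl(-c\,n^{1-\beta\gamma}\bigr)\;=\;o(n^{-1-\alpha}),
\]
which is precisely (\ref{hhhfff}). There is no real obstacle here; the only subtlety is choosing $\gamma$ close enough to $1$ so that $\beta\gamma<1$, which is exactly the hypothesis of the lemma, and making sure Lemma \ref{Niklemma} may be applied with the $|z-z_0|^\alpha$ weight when $z_0$ lies on the subarc used.
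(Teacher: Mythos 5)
Your argument is correct and reaches the same conclusion, but it routes the polynomial bound on $P_n$ a bit differently from the paper. The paper applies Lemma \ref{Niklemma} only on $\G\cap\D_\d(z_0)$ (the portion of $J$ inside a fixed disk), uses $\mu\in\mathbf{Reg}$ together with the fast decay of $S_{\tau n,z_0,K}$ to make $R_n$ tiny on $K\setminus\D_\d(z_0)$, and then invokes the maximum principle on $K\cap\D_\d(z_0)$ to propagate the $Cn^{(1+|\a|)/2}$ bound inward before multiplying by the fast-decreasing factor on $n^{-\b}/2\le|z-z_0|\le\d$. You instead bound $\|P_n\|_K=\|P_n\|_\G$ directly and uniformly by covering $\G$ with finitely many subarcs of its $C^2$ Jordan-curve components and applying Lemma \ref{Niklemma} on each: near $z_0$ with the genuine power weight, and on the other subarcs with exponent $0$ and the positive continuous weight $w(z)|z-z_0|^\a$ (as the final sentence of that lemma allows), using $\mu\restr{J_i}\le\mu$ to pass to $\|P_n\|_{L^2(\mu)}$. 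Combined with Lemma \ref{a_priori_estimate}, this gives the polynomial bound $\|P_n\|_K\le Cn^A$ without any appeal to $\mathbf{Reg}$ or the maximum principle, after which multiplying by the estimate $|S_{\tau n,z_0,K}(z)|\le C_\g\exp(-c\,n^{1-\b\g})$ for $|z-z_0|\ge n^{-\b}/2$ gives (\ref{hhhfff}). This works because, in this section, $\mu$ has the global representation (\ref{general_mu_def}) on a union of $C^2$ Jordan curves, so the covering argument is legitimate; it is a cleaner way to obtain the same intermediate estimate, whereas the paper's version is structured to use only the local form of $\mu$ near $z_0$ plus regularity. Either way the conclusion is sound.
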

Recall that here $K$ is the set enclosed by $\G$.

\proof. Let us fix a $\d>0$ such that the intersection
$\G\cap \D_\d(z_0)$  lies in the interior of the arc
$J$ from Theorem \ref{thmain}.
By $\mu\in {\bf Reg}$ and the trivial estimate
$\|P_n\|_{L^2(\mu)}=O(1)$ we get that no matter how small $\e>0$
is given, for sufficiently large $n$ we have
$\|P_n\|_\G\le (1+\e)^n$. On the other hand,
in view of Proposition \ref{fdp}, we have for $z\not\in \D_\d(z_0)$, $z\in K$,
\[|S_{\tau n,z_0,K}(z)|\le C_\g e^{-c_\g\tau n\d^2},\]
so
\be \|R_n\|_{K\setm \D_\d}(z_0)=o(n^{-1-\a})\label{nnn}\ee
 certain holds.

 Consider now $K\cap \D_\d(z_0)$. Its boundary consists of
 the arc $\G\cap \D_\d(z_0)$, which is part of $J$, and
 of an arc on the boundary of $\D_\d(z_0)$, where we already
 know the  bound (\ref{nnn}). On the other hand,
 on $\G\cap \D_\d(z_0)$ we have, by Lemma \ref{Niklemma},
 \[|P_n(z)|\le Cn^{(1+|\a|)/2}\|P_n\|_{L^2(\mu)}\le Cn^{(1+|\a|)/2}.\]
Therefore, by the maximum principle, we obtain
the same bound (for large $n$) on the whole
set $K\cap \D_\d(z_0)$. As a consequence,
for $z\in K\setm \D_{n^{-\b}/2}$
 \[|R_n(z)|\le Cn^{(1+|\a|)/2}e^{-c_\g\tau n (n^{-\b}/2)^\g}=o(n^{-1-\a})\]
if we choose $\g>1$ in Proposition \ref{fdp} so that $\b \g<1$.
These prove (\ref{hhhfff}).\endproof

After these preliminaries we return to the proof of Theorem
\ref{thmain}, more precisely to the lower  estimate
of $\l_n(\mu,z_0)$.

Let $ \eta > 0 $ be arbitrary, and let $ n $ be so large that
\[
    \frac{1}{1+ \eta} w(z_0) \leq w(z) \leq (1+\eta) w(z_0), \qquad
  \frac{1}{1+\eta}  |z-z_0|\le    |q(z) - z_0| \leq (1+\eta) |z-z_0|\]
hold for all $ z^* \in I_n $, where $I_n$ is the set
from Lemma \ref{general_main_lemma}. Then we obtain from Lemma \ref{general_main_lemma}
(recall
that $z^*=q(z)$)
\begin{align*}
    \int_{z^* \in I_n} |R_n(z^*)|^2 & |z^* - z_0|^{\alpha} ds_{\s }(z^*) \\
    & \leq (1+\eta)^{|\a|} \int_{z^* \in I_n} |R_n(z^*)|^2 |z - z_0|^\alpha ds_\G(z) \\
    & \leq (1+\eta)^{|\a|} \int_{z^* \in I_n} |R_n(z)|^2 |z-z_0|^\alpha ds_\G(z) + o(n^{-(\a+1 )}) \\
    & \leq \frac{(1+ \eta)^{|\a|+1}}{w(z_0)} \int_{z^* \in I_n} |R_n(z)|^2 w(z) |z-z_0|^\alpha ds_\G(z) + o(n^{-(\a+1 )}) \\
    & \leq \frac{(1+\eta)^{|\a|+1}}{w(z_0)} \lambda_n(\mu, z_0) + o(n^{-(\a+1 )}).
\end{align*}
On the other hand, if we notice that if, for some $z\in \s$,
we have $z^*\not\in I_n$ then necessarily   $|z-z_0|\ge n^{-\b}/2$,
we obtain from Lemma \ref{lemmaadded}
\begin{align*}
    \int_{z^* \in \s  \setminus I_n} |R_n(z^*)|^2 & |z^*-z_0|^{\alpha} ds_{\s }(z^*) =o(n^{-(1+\a)}).
\end{align*}
Combining these, it follows that
\bean
    \lambda_{\deg(R_n)}(\mu_{\s }, z_0)  &\leq& \int_{z \in \s } |R_n(z^*)|^2 |z^* - z_0|^\alpha ds_\s(z^*) \\
    & \leq &\frac{(1+\eta)^{|\a|+1}}{w(z_0)} \lambda_n(\mu, z_0)+ o(n^{-(\a+1 )}) .
\eean
Since $ \deg(R_n) \le (1+\tau)n $, we can conclude from
(\ref{lemniscate_result}) (see also (\ref{liminf}))
\bean
    \frac{L_\a}{(\pi\o_\s(z_0))^{\alpha + 1}}
    & = &\liminf_{n \to \infty} \deg(R_n)^{\alpha + 1} \lambda_{\deg(R_n)}(\mu_{\s }, z_0) \\
    & \leq& \liminf_{n \to \infty} (1+\tau)^{\a +1}\frac{(1+\eta)^{|\a|+1}}{w(z_0)} n^{\alpha + 1} \lambda_n(\mu, z_0).
\eean
But here $ \tau, \eta > 0 $  are arbitrary, so we get
\begin{align*}
    \liminf_{n \to \infty} n^{\alpha + 1} \lambda_n(\mu, z_0) \ge \frac{w(z_0)}{(\pi \omega_{\s}(z_0))^{\alpha + 1}} L_\a.
\end{align*}
As $\o_\s(z_0)\le \o_\G(z_0)+\e$ (see (\ref{lemniscate_theorem_interior})),
for $\e\to 0$ we finally arrive at the lower estimate
\be \liminf_{n \to \infty} n^{\alpha + 1} \lambda_n(\mu, z_0) \ge \frac{w(z_0)}{(\pi \omega_{\G}(z_0))^{\alpha + 1}}L_\a.
\label{aaa}\ee

\subsection{The upper estimate} Let now $ \s  $ be the lemniscate given by the first part of Proposition
\ref{lemniscate_approximation}, and let $ P_n $ be the polynomial extremal for $ \lambda_n(\mu_{\s }, z_0) $. Define, with some $\tau>0$,
\[
    R_n(z) = P_n(z) S_{\tau n, z_0, L}(z),
\]
where $ S_{\tau n, z_0, L} $ is the  fast decreasing polynomial given by
Proposition \ref{fdp} for the lemniscate set $ L $ enclosed by $\s$
(with some $\g>1$). Let $ \eta > 0 $ be arbitrary, $\frac12<\b<1$ as before,
 and suppose that $ n $ is so large such that
\begin{align*}
    \frac{1}{1+\eta} w(z_0) & \leq w(z) \leq (1+\eta) w(z_0) \\
    \frac{1}{\eta+1 }  & \leq |q'(z)| \leq (1+\eta)\\
    \frac{1}{1+\eta} |z-z_0| & \leq |q(z) - z_0| \leq (1+\eta) |z-z_0|
\end{align*}
are true for all $ |z-z_0| \leq n^{-\beta} $. Using Lemma \ref{general_main_lemma}
(more precisely its version when  $\s$ encloses $\G$) we have
(recall again that $z^*=q(z)$)
\begin{align*}
    \int_{z^* \in I_n} |R_n(z)|^2 & w(z) |z-z_0|^\alpha ds_\G(z) \\
    & \leq (1+\eta) w(z_0) \int_{z^* \in I_n} |R_n(z)|^2 |z-z_0|^\alpha ds_\G(z) \\
    & \leq (1+\eta) w(z_0) \int_{z^{*} \in I_n} |R_n(z^*)|^2 |z-z_0|^{\alpha} ds_{\s }(z^*) + o(n^{-(\a+1 )}) \\
    & \leq (1+\eta)^{|\a|+1} w(z_0) \int_{z^* \in I_n} |R_n(z^*)|^2 |z^* - z_0|^{\alpha} ds_{\s }(z^*) + o(n^{-(\a+1 )}) \\
    & \leq (1+\eta)^{|\a|+1} w(z_0) \lambda_n(\mu_{\s },z_0) + o(n^{-(\a+1 })).
\end{align*}

On the other hand, Lemma \ref{lemmaadded} (but now
applied for the system of curves $\s$ rather than for $\G$) implies, as before,
\[
    \int_{\Gamma \setminus \D_{n^{-\b}/2}(z_0)} |R_n(z)|^2 |z - z_0|^\a d\mu(z) = o(n^{-(1+\a)}).
\]
Therefore,
\[
    \lambda_{\deg(R_n)}(\mu, z_0) \leq (1+\eta)^{|\a|+1} w(z_0) \lambda_n(\mu_{\s }, z_0) + o(n^{-(\a+1 )}),
\]
which, similarly to the lower estimate, upon using (\ref{lemniscate_result}) and letting $\tau,\eta$ tend to zero, implies  (see also
(\ref{limsup}))
\begin{align*}
    \limsup_{n \to \infty} n^{\alpha + 1} \lambda_n(\mu, z_0) & \leq  \frac{w(z_0)}
    {(\pi\o_\s(z_0))^{\a+1}} L_\a.
\end{align*}
Here, in view of (\ref{lemniscate_theorem_exterior}),
$\o_\G(z_0)\le \o_\s(z_0)+\e$, hence for $\e\to 0$ we
conclude
\begin{align*}
    \limsup_{n \to \infty} n^{\alpha + 1} \lambda_n(\mu, z_0) & \leq \frac{w(z_0) }
    {(\pi\o_\G(z_0))^{\a+1}}L_\a.
\end{align*}
This and (\ref{aaa}) prove (\ref{main_result}).\endproof

\sect{Piecewise smooth Jordan curves}
The proof in the preceding section can be carried out
without any difficulty if $\G$ consists of piecewise
$C^2$-smooth Jordan curves, provided that  in a neighborhood of
$z_0$ the $\G$ is $C^2$-smooth. Indeed, in that case
we can still talk about $\o_\G$ which is continuous
where $\G$ is $C^2$-smooth (see \cite[Proposition 2.2]{Totikarc}), and in the above
proof the $C^2$-smoothness was used only in
a neighborhood of $z_0$. Therefore,
we have

\begin{prop}\label{thmainprop}
      Let $ \Gamma $ consist of finitely many
      disjoint, piecewise $C^2$-smooth Jordan curves. Let $z_0\in\G$,
      and in a neighborhood of
      $ z_0\in \G$ let $\G$ be $C^2$-smooth.  Then, for the measure
      $\mu$ given in (\ref{general_mu_def}), we have (\ref{main_result}).
\end{prop}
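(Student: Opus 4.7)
\proof. The plan is to verify that the proof of Theorem \ref{thmain} for smooth Jordan curves (carried out in the preceding section) depends on the $C^2$-smoothness of $\Gamma$ only through local properties in a neighborhood of $z_0$, together with certain global facts that do not use smoothness at all. Since by hypothesis $\Gamma$ is $C^2$-smooth near $z_0$, every local step can be performed verbatim; it only remains to check that the global tools (lemniscate approximation, regularity, Bernstein--Walsh type bounds) still apply in the piecewise smooth setting.

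First I would recall that $\omega_\Gamma$ is well defined and continuous at $z_0$ by \cite[Proposition 2.2]{Totikarc}, because $z_0$ lies in a $C^2$-smooth portion of the outer boundary. The local objects used in the preceding proof --- the Bernstein-type inequality of Lemma \ref{bernstein}, the Nikolskii-type inequality of Lemma \ref{Niklemma}, the fast decreasing polynomials of Proposition \ref{fdp} and Corollary \ref{fdp_beta}, the Remez-type inequality of Lemma \ref{Remez}, and the comparison map $q$ between $\Gamma$ and a touching lemniscate built in Lemma \ref{general_main_lemma} --- all involve only an arbitrarily small arc of $\Gamma$ around $z_0$, on which $\Gamma$ is already $C^2$. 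Consequently each of the local estimates in the proofs of the upper and lower bounds for $\lambda_n(\mu, z_0)$ goes through unchanged.

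Next I would verify that Proposition \ref{lemniscate_approximation} is applicable. This approximation result, taken from \cite{NT}, requires $\Gamma$ to be $C^2$-smooth only in a neighborhood of the touching point $P=z_0$; its proof is local at $P$ and uses the other components only as disjoint Jordan curves. Hence both the ``outside'' lemniscate $\sigma$ (with $\omega_\Gamma(z_0)\le \omega_\sigma(z_0)+\varepsilon$) and the ``inside'' lemniscate $\sigma$ (with $\omega_\sigma(z_0)\le \omega_\Gamma(z_0)+\varepsilon$) can be produced under the weaker piecewise $C^2$ hypothesis. With these at hand, the lemniscate-to-$\Gamma$ transfer of the upper and lower estimates runs exactly as in the preceding section.

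Finally I would check the remaining global ingredients. The a priori bound of Lemma \ref{a_priori_estimate} uses only fast decreasing polynomials and the local representation of $\mu$, so it is unaffected. The estimate $\|P_n\|_\Gamma\le (1+\varepsilon)^n$ used in Lemma \ref{lemmaadded} follows from $\mu\in\mathbf{Reg}$ and needs no smoothness. The only delicate point is the Bernstein--Walsh/maximum principle argument inside the bounded components enclosed by $\Gamma$ and inside the disk $\Delta_\delta(z_0)$; here the boundary consists of a piece of $J$ (where $\Gamma$ is $C^2$) and an arc of $\partial \Delta_\delta(z_0)$, so the argument is purely local at $z_0$ and proceeds as before. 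Putting all this together, both the lower estimate (\ref{aaa}) and the matching upper estimate are obtained for the piecewise $C^2$ setting, yielding (\ref{main_result}). The main obstacle --- actually the only one requiring attention --- is confirming that Proposition \ref{lemniscate_approximation} remains valid under the relaxed hypothesis, but this follows directly from the local nature of the construction in \cite{NT}.\endproof
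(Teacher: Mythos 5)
Your proposal is correct and follows essentially the same reasoning as the paper, whose own proof of Proposition \ref{thmainprop} is just a short remark observing that $\omega_\Gamma$ remains well defined and continuous near $z_0$ by \cite[Proposition 2.2]{Totikarc} and that the argument for smooth curves used $C^2$-smoothness only locally. Your elaboration — in particular the check that Proposition \ref{lemniscate_approximation} is already stated under a merely local $C^2$ hypothesis — simply spells out the details the paper leaves implicit.
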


\sect{Arc components}
In this section, we prove Theorem \ref{thmain} when
$\G$ is a union of $C^2$-smooth Jordan curves and arcs, and
$\mu$ is the measure  (\ref{general_mu_def})
considered before. To be more specific, our aim
is to verify
\begin{prop}\label{thmainprop*}
      Let $ \Gamma $ consist of finitely many
      disjoint $C^2$-smooth Jordan curves or arcs lying exterior
      to each other, and let $z_0\in\G$.
     Assume that in a neighborhood of
      the point $ z_0\in \G$, the $\G$ is $C^2$-smooth, and $z_0$ is not
      an endpoint of an arc component of $\G$.  Then, for the measure
      (\ref{general_mu_def})  where $w$ is continuous and positive and $\a>-1$, we have
(\ref{main_theorem_equation}).
\end{prop}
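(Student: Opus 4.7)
The plan is to establish matching upper and lower bounds for $\lambda_n(\mu,z_0)$ with quite different techniques, as hinted at in the introduction.

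For the lower bound, I would reduce to the piecewise smooth Jordan curve case already proved. The idea is to construct, for every $\varepsilon>0$, a system $\tilde\Gamma$ of piecewise $C^2$-smooth Jordan curves with $z_0\in\tilde\Gamma$ for which $\omega_{\tilde\Gamma}(z_0) \le \omega_{\Gamma}(z_0)+\varepsilon$ and for which the extremal polynomial $P_n$ for $\lambda_n(\mu,z_0)$ on $\Gamma$ can be shown to be nearly extremal for $\lambda_n(\tilde\mu,z_0)$ on $\tilde\Gamma$, with an appropriate comparison measure $\tilde\mu$. Applying Proposition~\ref{thmainprop} to $\tilde\Gamma$ and letting $\varepsilon\to 0$ then produces the desired lower estimate, with the argument mirroring the lower-bound proof of Section~5 that uses fast decreasing polynomials from Corollary~\ref{fdp_beta} to localize near $z_0$. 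The construction of $\tilde\Gamma$ is delicate: a naive thickening of each arc component into a thin Jordan loop produces the wrong equilibrium density at $z_0$ (roughly $\omega_\Gamma(z_0)/2$, since the loop's mass distributes over both of its sides), so a more careful construction in the spirit of the sharp lemniscate approximation of Proposition~\ref{lemniscate_approximation} is needed.

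For the upper bound I would construct explicit test polynomials $R_n$ of degree $(1+o(1))n$ with $R_n(z_0)=1$ and
\[
  \int |R_n|^2\,d\mu \;\le\; \bigl(1+o(1)\bigr)\,\frac{w(z_0)\,L_\alpha}{(\pi\omega_\Gamma(z_0))^{\alpha+1}}\,n^{-(\alpha+1)}.
\]
Near $z_0$, where $\Gamma$ is $C^2$-smooth, the polynomial should mimic the extremal polynomial for the model weight $|x|^\alpha$ on $[-1,1]$ supplied by Proposition~\ref{model_2}, transplanted through a local arc-length parametrization of $\Gamma$; this is what carries the constant $L_\alpha$. To make $R_n$ small on the rest of $\Gamma$, I would multiply by a fast decreasing factor from Corollary~\ref{fdp_beta} and include additional zeros distributed according to a discretization of the equilibrium measure $\nu_\Gamma$. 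Following the abstract's outline, this discretization is built from the zeros of an associated Bessel function $J_\nu$ (with $\nu$ chosen in terms of $\alpha$) on the arc components, supplemented by a uniform-distribution discretization where appropriate. Bessel zeros are the natural choice because, via the Mehler--Heine asymptotics, their rescaled distribution matches the $|z-a|^{-1/2}$ blow-up of the equilibrium density at the arc endpoints.

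The main obstacle is the upper bound. The lower bound should reduce cleanly to Proposition~\ref{thmainprop} once the approximating Jordan curve system is produced, using adaptations of the Section~5 machinery. The upper bound, by contrast, has no ready counterpart in the curve theory: the lemniscate and polynomial inverse image method of Section~4 relies on exhausting the interior of a curve system, which cannot be done for arcs. The heart of the proof must therefore be a direct construction of test polynomials via the Bessel-zero discretization, and I expect the hardest step to be proving that this discrete polynomial is $L^2(\mu)$-small globally on $\Gamma$ while simultaneously preserving the sharp local constant $L_\alpha$ near $z_0$. Careful estimates at the transition between the local model region and the discretized outer region, together with control near the arc endpoints where $\omega_\Gamma$ blows up, should absorb the bulk of the technical work.
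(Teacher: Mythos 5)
Your upper-bound outline has the right skeleton: the paper's test polynomial is a product $\mathcal{C}_n = \mathcal{A}_n\mathcal{B}_n$ of a ``Bessel factor'' $\mathcal{A}_n$ (zeros placed at points $a_k$ defined so that $\nu_\Gamma(\overline{0a_k})=j_{\beta,k}/\pi n$) with an equilibrium-discretization factor $\mathcal{B}_n$ (center-of-mass points $\xi_k$), multiplied by a fast decreasing polynomial. One correction, though: the Bessel zeros $j_{\beta,k}$ with $\beta=(\alpha+1)/2$ enter because of the weight $|z-z_0|^\alpha$ at the \emph{interior} point $z_0$; they replicate the local extremal polynomial for the model $|x|^\alpha$ on $[-1,1]$, not the $|z-a|^{-1/2}$ blow-up of $\omega_\Gamma$ at the arc \emph{endpoints}. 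Near the endpoints the paper uses the plain $\xi_k$ discretization (following \cite{Totikarc}); no Bessel zeros appear there.

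The lower bound is where your plan has a genuine gap, and it cannot be repaired by ``a more careful construction in the spirit of Proposition~\ref{lemniscate_approximation}.'' If $\tilde\Gamma$ is any system of Jordan curves that contains $\Gamma$ in its polynomial convex hull, passes through $z_0$, and is $C^2$ near $z_0$, then near $z_0$ the curve lies on only one side of the arc $J$, so $\omega_{\tilde\Gamma}(z_0)=\tfrac{1}{2\pi}\partial g_{\tilde\Omega}(z_0)/\partial\mathbf{n}_+\le \tfrac{1}{2\pi}\partial g_{\Omega}(z_0)/\partial\mathbf{n}_+=A_+/2\pi$, while $\omega_\Gamma(z_0)=(A_++A_-)/2\pi$ with $A_->0$. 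This is a hard gap, independent of how cleverly $\tilde\Gamma$ is chosen; the density $\omega_{\tilde\Gamma}(z_0)$ can never approach $\omega_\Gamma(z_0)$ (and there is also a topological obstruction to having $\tilde\Gamma\cap\Delta_\delta(z_0)=\Gamma\cap\Delta_\delta(z_0)$ while keeping $g_\Omega$ small on all of $\tilde\Gamma$: a return strand must re-enter $\Delta_\delta$). A naive direct comparison would then produce a lower bound with $\omega_{\tilde\Gamma}(z_0)\approx A_+/2\pi$ in the denominator, which is \emph{larger} than the correct answer and contradicts the upper estimate; the comparison itself, not the density, is what actually breaks.

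What the paper does instead is a contradiction argument with an explicit degree inflation. Assume $n^{1+\alpha}\lambda_n(\mu,z_0)<(1-\delta)w(z_0)L_\alpha/(\pi\omega_\Gamma(z_0))^{\alpha+1}$ for infinitely many $n$. Close each arc into a curve, take a lemniscate $\sigma=\{|T_N|=1\}$ and its slightly contracted level set $\sigma_a$, build $\Gamma^*$ as a piecewise-$C^2$ curve system that caps $\Gamma_0$ with an arc of $\sigma_a$ near $z_0$, and replace the extremal $Q_n$ by $P_{n+mN}=Q_nT_N^m$ with $m\approx(1+\varepsilon)^7A_-n/(NA_+)$. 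The factor $T_N^m$ both kills $P_{n+mN}$ on the cap (where $|T_N|=e^{-a}$) and, crucially, increases the degree by $mN$, so that the ratio $((n+mN)/n)^{\alpha+1}\approx((A_++A_-)/A_+)^{\alpha+1}$ exactly compensates for $(\omega_\Gamma(z_0)/\omega_{\Gamma^*}(z_0))^{\alpha+1}$. This produces a contradiction with Proposition~\ref{thmainprop} applied to $\Gamma^*$. The degree-compensation mechanism is the essential idea you are missing: without it there is no way to pass from the one-sided density $A_+/2\pi$ on the curve system back to the two-sided density $(A_++A_-)/2\pi$ on the arc.
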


 We shall
need some facts about Bessel functions, and a
discretization of the equilibrium measure $\nu_\G$ that uses
the zeros of an appropriate Bessel function.

\subsection{Bessel functions and some local asymptotics}
We shall need the Bessel function of the first kind of order $\b>0$:
\[J_\b(z)=\sum_{n=0}^\i\frac{(-1)^n (z/2)^{2n+\b}}{n!\G(n+\b+1)},\]
as well as the functions (c.f. \cite{Lubinsky2})
\[\mathbb{J}_\b(u,v)=\frac{J_\b(\sqrt u)\sqrt vJ_\b'(\sqrt v)-
J_\b(\sqrt v)\sqrt uJ_\b'(\sqrt u)}{2(u-v)},\]
\[\mathbb{J}_\b^*(z)=\frac{{J}_\b(z)}{z^{\b}},
\qquad \mathbb{J}_\b^*(u,v)=\frac{\mathbb{J}_\b(u,v)}{u^{\b/2}v^{\b/2}}.\]
These latter ones are analytic, and we have
\[\mathbb{J}_\b^*(u,0)=\frac{1}{2^{2\b+1}u}\sum_{n=1}^\i
\frac{(-1)^n(\sqrt u/2)^{2n}}{n!\G(n+\b+1)}\left(\frac{\b}{\G(\b+1)}-\frac{2n+\b}{\G(\b+1)}\right)
=\frac{\mathbb{J}^*_{\b+1}(\sqrt u)}{2^{\b+1}\G(\b+1)}.\]

Let $d\nu_0(x)$ be the measure $x^\b dx$ with support $[0,2]$, and
$K_n^{(0)}(x,t)$ its $n$-th reproducing kernel. It is known
(see \cite[(1.2)]{Lubinsky1} or \cite[(4.5.8), p. 72]{Szego}) that
\[\frac{K_n^{(0)}\left(\frac{x^2}{2n^2},0\right)}{K_n^{(0)}(0,0)}
=(1+o(1)) \frac{\mathbb{J}_\b^*(x^2,0)}
{\mathbb{J}_\b^*(0,0)},\]
which  holds
uniformly for  $|x|\le A$ with any fixed $A$. We have already mentioned
(see e.g.
\cite[Theorem 3.1.3]{Szego}) that the polynomial
$K_n^{(0)}(t,0)/K_n^{(0)}(0,0)$ is the extremal polynomial of degree $n$
for $\l_n(\nu_0,0)$, so the preceding relation
gives an asymptotic formula for this extremal polynomial
on intervals $[0,A/n^2]$. If now $d\nu_1(x)=(2x)^\b dx$ but with support
$[0,1]$, and $K_n^{(1)}$ is the associated reproducing kernel,
then $K_n^{(1)}(t,0)/K_n^{(1)}(0,0)$ is the extremal polynomial
of degree $n$ for $\l_n(\nu_1,0)$, and it is clear that
this is just a scaled version of the extremal polynomial for $\nu_0$:
\[\frac{K_n^{(1)}(t,0)}{K_n^{(1)}(0,0)}=\frac{K_n^{(0)}(2t,0)}{K_n^{(0)}(0,0)}.\]
Therefore,
\[\frac{K_n^{(1)}\left(\frac{x^2}{4n^2},0\right)}{K_n^{(1)}(0,0)}=(1+o(1)) \frac{\mathbb{J}_\b^*(x^2,0)}
{\mathbb{J}_\b^*(0,0)}.\]
Then the same is true for the measure $2^{-\b}d\nu_1(x)=x^\b dx$ with support
$[0,1]$ (multiplying the measure by a constant does not change
the extremal polynomial for the Christoffel functions).
Next, consider the measure $d\nu_2(x)=|x|^\a dx$ with support $[-1,1]$. For this the
extremal polynomial for $\l_{2n}(\nu_2,0)$ is obtained from the
extremal polynomial for $\l_{n}(\nu_1,0)$  with $\b=(\a-1)/2$ by the substitution
$t\to t^2$ (see Section \ref{sectmodel1}, in particular see
the last paragraph in that section), i.e.
\[\frac{K_{2n}^{(2)}\left(t,0\right)}{K_{2n}^{(2)}(0,0)}=
\frac{K_{n}^{(1)}\left(t^2,0\right)}{K_n^{(1)}(0,0)}.\]
Hence, for even integers $n$
\[\frac{K_{n}^{(2)}\left(t,0\right)}{K_{n}^{(2)}(0,0)}=(1+o(1)) {\cal J}_{\frac{\a+1}2}(nt),\qquad |t|\le \frac{A}{n},\]
where
\be {\cal J}_{\frac{\a+1}2}(z):=\frac{\mathbb{J}_{\frac{\a-1}2}^*(z^2,0)}
{\mathbb{J}_{\frac{\a-1}2}^*(0,0)}=\frac{\mathbb{J}_{\frac{\a+1}2}^*(z)}
{\mathbb{J}_{\frac{\a+1}2}^*(0)}.\label{jdef}\ee

Fix a positive number $A$. According to what we have
just seen, for every even $n$
\bean \int_{-A/n}^{A/n}{\cal J}_{\frac{\a+1}{2}}(n t)^2|t|^\a dt&\le&
(1+o(1))\int_{-A/n}^{A/n}
\left(\frac{K_{n}^{(2)}\left(t,0\right)}{K_n^{(2)}(0,0)}\right)^2|t|^\a dt\\
&\le& (1+o(1))\l_n(\nu_2,0),\eean
and so for any (even) $n$
\[\int_{-A}^{A}{\cal J}_{\frac{\a+1}{2}}(x)^2|x|^\a dx= n^{\a+1}\int_{-A/n}^{A/n}{\cal J}_{\frac{\a+1}{2}}(n t)^2|t|^\a dt
  \le (1+o(1)) n^{\alpha + 1} \lambda_n\left(\nu_2,0\right).\]
Now if we let here $n\to\i$ and use the limit (\ref{nincs}) for the right-hand side,
then we obtain
\[ \int_{-A}^{A}{\cal J}_{\frac{\a+1}{2}}(x)^2|x|^\a dx\le L_\a,\]
where $L_\a$ is from (\ref{lalpha}).
Finally, since here $A$ is arbitrary, we can conclude
\be \int_{-\i}^{\i}{\cal J}_{\frac{\a+1}{2}}(x)^2|x|^\a dx\le L_\a.
\label{jfinite}\ee

\subsection{The upper estimate in Theorem \ref{thmain} for one arc}
The aim of this section is to construct polynomials
that verify the upper estimate for the Christoffel functions in Theorem
\ref{thmain} (which is the same as in Proposition \ref{thmainprop*})
when $\G$ consists of a single $C^2$-smooth arc, and $z_0\in\G$ is not
an endpoint of that arc.
In the next subsection  we shall indicate what to do when $\G$ has other components, as
well.

Let $\nu_\G$ be the equilibrium measure of $\G$ and $s_\G$ the arc measure
on $\G$. Since $\G$ is assumed
to be $C^{2}$-smooth, we have $d\nu_\G(t)=\o_\G(t)ds_\G(t)$
with an $\o_\G$ that is continuous and positive away
from the endpoints of $\G$  (see \cite[Proposition 2.2]{Totikarc}).

We may assume $z_0=0$ and that the real line is the tangent line
to $\G$ at the origin. By assumption, the measure $\mu$ we are
dealing with, is, in a neighborhood of the origin,
of the form $d\mu(z)=w(z)|z|^\a ds_\G(z)$ with some positive
and continuous function $w(z)$.

  Since $\G$ is assumed to be $C^2$-smooth, in
a neighborhood of the origin we have the parametrization
$\g(t)=\g_1(t)+i\g_2(t)$, $\g_1(t)\equiv t$, where $\g_2$ is a
twice continuously differentiable function such that $\g_2(0)=\g_2'(0)=0$.
In particular, as $t \to 0$ we have $\g_2(t)=O(t^2)$, $\g_2'(t)=O(|t|)$.
We shall also take an orientation of $\G$, and we shall
denote $z\prec w$ if $z\in \G$ precedes $w\in \G$ in that orientation.
We may assume that this orientation is such that around the origin
we have $z\prec w\Leftrightarrow \Re z<\Re w$.

It is known that, when dealing with $|z|^\a$ weights
on the real line, Bessel functions of the first kind
enter the picture, see \cite{Kuijlaars}, \cite{Lubinsky1}, \cite{Lubinsky2}.
For a given large $n$
we shall construct the necessary polynomials from two sources:
from points on $\G$ that follow the pattern of the zeros of the Bessel
function ${\cal J}_\frac{\a+1}{2}$, and from points that are obtained
from discretizing the equilibrium measure $\nu_\G$. The first type will
be used close to the origin (of distance $\le 1/n^\tau$ with some appropriate
$\tau$), while the latter type will be on the rest of  $\G$. So first
we shall discuss two different divisions of $\G$.

\subsubsection{Division based on the zeros of Bessel functions}
Let $\b=\frac{\a+1}2$ --- it is a positive number because $\a>-1$.
It is known that $J_\b$, and hence also ${\cal J}_\b$ from (\ref{jdef}),
has infinitely many positive zeros which are all simple and tend to infinity,
let them be $j_{\b,1}<j_{\b,2}<\ldots$. We have the asymptotic formula
(see \cite[15.53]{Watson})
\be j_{\b,k}=(k+\frac\b 2 -\frac14)\pi +o(1),\qquad k\to\i.\label{zeroasymp}\ee
The negative zeros of ${\cal J}_\b$
are $-j_{\b,k}$, and we have the product formula
(see \cite[15.41,(3)]{Watson})
\[J_\b(z)=\frac{(z/2)^\b}{\G(\b+1)}\prod_{k=1}^\i \left(1-\frac{z^2}{j_{\b,k}^2}\right).\]
Therefore,
\be {\cal J}_\b(z)=\prod_{k=1}^\i \left(1-\frac{z^2}{j_{\b,k}^2}\right).\label{ppr}\ee

Let $a_0=0$, and for $k>0$ let $a_k\in \G$ be the unique point on $\G$
such that $0\prec a_k$, and
\be \nu_\G(\ov{0a_k})=\frac{j_{\b,k}}{\pi n},\label{akdef}\ee
where $\ov{0a_k}$ denotes the arc of $\G$ that lies in between
$0$ and $a_k$. For negative $k$ let similarly $a_k$
be the unique number for which $a_k\prec 0$ and
\be \nu_\G(\ov{a_k0})=\frac{j_{\b,|k|}}{\pi n}.\label{akdef1}\ee
The reader should be aware that these $a_k$ and the whole division
depends on $n$, so a more precise notation would be $a_{k,n}$
for $a_k$, but we shall suppress the additional parameter $n$.

This definition makes sense only for finitely many $k$, say
for $-k_0<k<k_1$, and in view of (\ref{zeroasymp}) we have
$k_0+k_1=n+O(1)$, i.e. there are about $n$ such $a_k$ on $\G$.
The arcs $\ov{a_ka_{k+1}}$ are subarcs of $\G$
that
follow each other according to $\prec$, for them
\[\nu_\G(\ov{a_{k-1}a_{k}})=\frac{j_{\b,k}-j_{\b,k-1}}{\pi n},\qquad k>0,\]
\[\nu_\G(\ov{a_{k-1}a_{k}})=\frac{j_{\b,k+1}-j_{\b,k}}{\pi n},\qquad k<0,\]
and their union is almost
the entire $\G$:  there can be two additional arcs around the two endpoints
 with equilibrium measure $<(j_{\b,k_0}-j_{\b,k_0-1})/\pi n$
resp.
 $<(j_{\b,k_1}-j_{\b,k_1-1})/\pi n$.

\subsubsection{Division based solely on the equilibrium measure}
In this subdivision of $\G$ we follow the procedure in
\cite[Section 2]{Totikarc}.
Let $\ov{b_0b_1}\subset \G$ be the unique arc (at least for large $n$ it is unique)
with the property that $0\in \ov{b_0b_1}$,
$\nu_\G(\ov{b_0b_1})=1/n$, and if $\xi_0$ is the center of mass
of $\nu_\G$ on $\ov{b_0b_1}$, then $\Re \xi_0=0$.
For $k> 1$ let $b_k\in\G$ be the point on $\G$ (if there is one)
with the property that $0\prec b_k$ and
$\nu_\G(\ov{b_1b_k})=(k-1)/n$, and similarly, for
negative $k$ let $b_k\prec 0$ be the point on $\G$
with the property $\nu_\G(\ov{b_kb_0})=|k|/n$.
This definition makes sense only for finitely many $k$,
say for $-l_0<k<l_1$. Thus, the arcs $\ov{b_kb_{k+1}}$,
$-l_0<k<l_1-1$, continuously fill $\G_0$  (in the orientation of $\G_0$)
 and they all have equal, $1/n$ weight
 with respect to the equilibrium measure $\nu_\G$.
  It may happen
 that, with this selection, around the endpoints of $\G$ there still
 remain two ``little" arcs, say $\ov{b_{-l_0}b_{-l_0+1}}$
 and $\ov{b_{l_1-1}b_{l_1}}$ of $\nu_\G$-measure
  $<1/n$.
 We include also these two small arcs into our subdivision
 of $\G$, so in this case we divide $\G$ into $n+1$ arcs
 $\ov{b_kb_{k+1}}$, $k=-l_0,\ldots,l_1-1$.

Let $\xi_k$ be the center of mass of the measure $\nu_\G$ on
the arc
 $\ov{b_kb_{k+1}}$:
 \be \xi_k=\frac{1}{\nu_\G(\ov{b_kb_{k+1}})}\int_{\ov{b_kb_{k+1}}}u\;d\nu_\G(u).\label{xidef}\ee
Since the length of  $\ov{b_kb_{k+1}}$ is at most $C/n$ (note that $\o_\G$
has a positive lower bound), and $\G$ is $C^2$-smooth,
it follows that $\xi_k$ lies close
to the arc $\ov{b_kb_{k+1}}$:
\be {\rm dist}(\xi_k,\ov{b_kb_{k+1}})\le \frac{C}{n^2}\label{dist}\ee

For the polynomials
\be B_n(z)=\prod_{k\not=0}(z-\xi_k)\label{bnn}\ee
it was proven in \cite[Propositions 2.4, 2.5]{Totikarc} (see also
\cite[Section 2.2]{Totikarc}) that $B_n(z)/B_n(0)$ are uniformly bounded on $\G$:
\be \left |\frac{B_n(z)}{B_n(0)}\right|\le C_0,\qquad z\in \G.\label{bnbound}\ee

\subsubsection{Construction of the polynomials
${\cal C}_n$}
Choose a $0<\tau<1$ close to $1$ (we shall see later how close it has
to be to 1), and for an $n$ define $N=N_n=[n^{3(1-\tau)}]$.
We set
\be {\cal C}_n(z)=:\prod_{k=-N_n,\ k\not=0}^{N_n} \left(1-\frac{z}{a_k}\right)
\prod_{|k|>N_n} \left(1-\frac{z}{\xi_k}\right).\label{cndef}\ee
Note that the precise range of $k$ in the second factor
is $-l_0\le k<-N_n$ as well as $N_n<k\le l_1-1$.
Since the number of all $\xi_k$ is $n+1$, this polynomial has degree $n$,
and it takes the value 1 at the origin. This will be the main factor
in the test polynomial that will give the appropriate
upper bound for $\l_n(\mu,0)$, the other factor will
be the fast decreasing polynomial from Corollary \ref{fdp_beta}.

We estimate on $\G$ the two factors
\[{\cal A}_n(z):=\prod_{k=-N_n,\ k\not=0}^{N_n} \left(1-\frac{z}{a_k}\right)\]
and
\[{\cal B}_n(z):= \prod_{|k|>N_n} \left(1-\frac{z}{\xi_k}\right)\]
separately.
The estimates will be distinctly different for $|z|\le n^{-\tau}$ and
for $|z|>n^{-\tau}$.

\subsubsection{Bounds for ${\cal A}_n(z)$ for $|z|\le n^{-\tau}$}
In what follows, we shall use $N$ instead of $N_n$ ($=[n^{3(1-\tau)}]$).

Consider first
\[{\cal A}_n^*(x):=\prod_{k=1}^{N} \left(1-\frac{(n\pi\o_\G(0)x)^2}{j_{\b,k}^2}\right)\]
(recall that $j_{\b,k}$ are the zeros of the Bessel function $J_\b$ with $\b=(\a+1)/2$).
In view of (\ref{ppr}) we can write for real $|x|\le n^{-\tau}$
\[\frac{{\cal J}_\b(n\pi\o_\G(0) x)}{{\cal A}_n^*(x)}=\prod_{k>N} \left(1-\frac{(n\pi\o_\G(0) x)^2}{j_{\b,k}^2}\right).\]
Taking into account  (\ref{zeroasymp}), here
\[\frac{n\pi\o_\G(0) x}{j_{\b,k}}=O\left(\frac{n n^{-\tau}}{k}\right),\]
hence the product on the right is
\bean \exp\left\{ O\left(\sum_{k>N}\left(\frac{n n^{-\tau}}{k}\right)^2\right)\right\}
&=&\exp\left(O\left(\frac{n^{2(1-\tau)}}{N}\right)\right)\\
&=&\exp\left(O\left(\frac{1}{n^{(1-\tau)}}\right)\right)=1+o(1).\eean
Thus, our first estimate is
\be {\cal A}_n^*(x)=(1+o(1)){\cal J}_\b(n\pi\o_\G(0) x),\qquad |x|\le n^{-\tau}.\label{e1}\ee

Next, we go to a $z\in \G$ with $|z|\le n^{-\tau}$. Let $x$ be the real part of
$z$. Then, for $|z|\le n^{-\tau}$, we have (recall that $\G$ is $C^2$-smooth
and the real line is tangent to $\G$)
\[z=x+O(x^2)=x+O(n^{-2\tau}).\]

We shall need that the $a_k$'s with $|k|\le N$ are close to $j_{\b,k}/n\pi\o_\G(0)$.
To prove that, consider the parametrization $\g(t)=t+i\g_2(t)$ of $\G$ discussed in the beginning of this
section. Then $a_k=\g(\Re a_k)=\Re a_k+O((\Re a_k)^2)$. By the definition of the points
$a_k$ we have for $1\le k\le N$
\be \frac{j_{\b,k}}{\pi n}=\nu_\G(\ov {0a_k})=\int_{0}^{\Re a_k}
\o_\G(\g(t))|\g'(t)|dt.\label{g1}\ee
Now we use that around the origin $\o_\G$ is $C^1$-smooth (see \cite[Proposition 2.2]{Totikarc}),
hence on the right
\[\o_\G(\g(t))=\o_\G(0)+O(|\g(t)|)=\o_\G(0)+O(|t|),\]
while
\[|\g'(t)|=\sqrt{1+\g_2'(t)^2}=\sqrt{1+O(t^2)}=1+O(t^2),\]
 hence
\[\frac{j_{\b,k}}{\pi n}=\int_{0}^{\Re a_k}
\left(\o_\G(0)+O(|t|)\right)dt=\o_\G(0)\Re a_k+O((\Re a_k))^2, \]
which implies
\be \Re a_k=\frac{j_{\b,k}}{\pi n\o_\G(0)}+
O\left(\left(j_{\b,k}/n\right)^2\right). \label{g2}\ee

Therefore, since here $j_{\b,k}\le Ck$ (see  (\ref{zeroasymp})),
\be a_k-\frac{j_{\b,k}}{n\pi\o_\G(0)}=
(a_k-\Re a_k)+\Re a_k-\frac{j_{\b,k}}{n\pi\o_\G(0)}
=O\left(\left(k/n\right)^2\right).\label{ffg}\ee

Let
\be \r=(\a+9)(1-\tau)\label{rhodef},\ee
and suppose that
\be \left|x-\frac{j_{\b,k}}{n\pi\o_\G(0)}\right|\ge \frac{1}{n^{1+\r}}, \qquad
\mbox{for all $-N\le k\le N$}.\label{zzn}\ee
Then in the product
\[\frac{{\cal A}_n(z)}{{\cal A}_n^*(x)}=
\prod_{k=-N,\ k\not=0}^N \frac{1-z/a_k}{1-n\pi\o_\G(0) x/j_{\b,k}}
=\prod_{k=-N,\ k\not=0}^N \frac{j_{\b,k}-zj_{\b,k}/a_k}{j_{\b,k}-n\pi\o_\G(0) x}
\]
all denominators are $\ge c/n^\r$. As for the numerators,
we have (recall (\ref{ffg}) and $|a_k|\ge ck/n$)
\[|j_{\b,k}/a_k-n\pi\o_\G(0)|= O(k),\]
 and hence, because of $z=x+O(x^2)$,
\bean |zj_{\b,k}/a_k-n\pi\o_\G(0)x|&=& O(|z|k+n x^2)=O(Nn^{-\tau}+nn^{-2\tau})\\
&=&O(n^{3-4\tau}+n^{1-2\tau})=O(n^{3-4\tau}).\eean
Therefore, for the individual factors in ${\cal A}_n(z)/{\cal A}^*_n(z)$ we have
\[\frac{j_{\b,k}-zj_{\b,k}/a_k}{j_{\b,k}-n\pi\o_\G(0) x}
=1+O(n^{3-4\tau}n^\r),\]
from which  we can conclude
\bean \frac{{\cal A}_n(z)}{{\cal A}_n^*(x)}&=& \left(1+O(n^{3-4\tau}n^\r)\right)^{2N}
=\exp\left(O(n^{3-4\tau}n^\r N)\right)\\
&=&\exp\left(O(n^{6-7\tau+\r}\right))
=\exp\left(O(n^{(15+\a)(1-\tau)-\tau})\right)=1+o(1)\eean
provided
\be (15+\a)(1-\tau)<\tau.\label{prov1}\ee

Let $\G_n$ be the set of those $z\in \G$ for which $|z|\le n^{-\tau}$ and
(\ref{zzn}) is true with $x=\Re z$:
\be \G_n=\{z\in \G\sep |z|\le n^{-\tau},\ \ \mbox{(\ref{zzn}) is true with $x=\Re z$}\}.
\label{Gamman}\ee
So far we have proved
(see (\ref{e1}) and the preceding estimates)
\be {\cal A}_n(z)=(1+o(1)){\cal J}_\b(n\pi\o_\G(0) x),\qquad z\in \G_n.\label{e2}\ee
$\G_n$ is a subset of the arc $\G\cap \D_{n^{-\tau}}(0)$
of $s_\G$-measure at most $O(Nn^{-1-\r})=O(n^{2-3\tau-\r})$,
so its relative measure compared to the $s_\G$-measure of
$\G\cap \D_{n^{-\tau}}(0)$ is at most
\[O(n^{2-3\tau-\r+\tau})=O(n^{2-2\tau-\r})=o(N^{-2})\]
because
\[2-2\tau-\r=-(\a+7)(1-\tau)<-6(1-\tau).\]
Since ${\cal A}_n$ has degree $2N$,
from the  Remez-type inequality in Lemma \ref{Remez} we can conclude
that
\[\sup\{|{\cal A}_n(z)|\sep z\in \G\cap \D_{n^{-\tau}}(0)\}
\le (1+o(1))\sup\{|{\cal A}_n(z)|\sep z\in \G_n\}.\]
But ${\cal J}_\b(t)$ is bounded on the whole real line
(see \cite[Section 7.21]{Watson}), therefore we get from
here and from  (\ref{e2})
that there is a constant $C_1$ such that
\be |{\cal A}_n(z)|\le C_1\label{e3}\ee
for all $z\in \G$, $|z|\le n^{-\tau}$.

\subsubsection{Bounds for ${\cal B}_n(z)$ for $|z|\le n^{-\tau}$}
Consider now, for $z\in \G$, $|z|\le n^{-\tau}$,
the expression
\[{\cal B}_n(z)= \prod_{|k|>N} \frac{\xi_k-z}{\xi_k}.\]
Recall that the smallest and largest indices here (they are
$k_{-l_0}$ and $k_{l_1}$) refer to a $\xi_k$ that were
selected for the two additional intervals around the endpoints
of $\G$, hence for them we have
\[\frac{\xi_k-z}{\xi_k}=1+O(|z|)=1+o(1),\qquad k=-l_0,l_1-1.\]
The rest of the indices refer to points $\xi_k$ which were the center
of mass on the arcs $\ov{b_kb_{k+1}}$ which have $\nu_\G$-measure
equal to $1/n$. We are going to compare
$\log|z-\xi_k|$ with the average of $\log|z-t|$ over the
arc $\ov{b_kb_{k+1}}$ with respect to $\nu_\G$:
\[\log|z-\xi_k|-n\int_{\ov{b_kb_{k+1}}}\log|z-t|d\nu_\G(t)
=-n\int_{\ov{b_kb_{k+1}}}\log\left|\frac{z-t}{z-\xi_k}\right|d\nu_\G(t).\]
Here
\[\frac{z-t}{z-\xi_k}=1+\frac{\xi_k-t}{z-\xi_k},\]
and for $t\in \ov{b_kb_{k+1}}$, in the numerator
$|\xi_k-t|\le C/n$. Since $|z|$ is small (at most $n^{-\tau}$) and
compared to that $|\xi_k|$ is large ($\ge N/n=n^{2(1-\tau)-\tau}$),
the second term on the right is small in absolute value,
hence
\[\log\left|\frac{z-t}{z-\xi_k}\right|=\Re \log\left(1+\frac{\xi_k-t}{z-\xi_k}\right)
=\Re\frac{\xi_k-t}{z-\xi_k}+
O\left(\left|\frac{\xi_k-t}{z-\xi_k}\right|^2\right).\]
Therefore,
\bean  \left|n\int_{\ov{b_kb_{k+1}}}\log\left|\frac{z-t}{z-\xi_k}\right|d\nu_\G(t)\right|
&=&n\int_{\ov{b_kb_{k+1}}}
O\left(\left|\frac{\xi_k-t}{z-\xi_k}\right|^2\right)d\nu_\G(t)\\
&=&O\left(\frac{(1/n)^2}{(k/n)^2}\right)=O\left(\frac{1}{k^2}\right),\eean
because the integral
\[\int_{\ov{b_kb_{k+1}}}\Re \frac{\xi_k-t}{z-\xi_k}d\nu_\G(t)=\Re\frac{1}{z-\xi_k}\int_{\ov{b_kb_{k+1}}} (\xi_k-t)d\nu_\G(t)\]
vanishes by the choice of $\xi_k$.

Hence, if
\[H_n=\bigcup_{-l_0<k<-N,\ N<k<l_1-2}\ov{b_kb_{k+1}},\]
then
\bean \log \prod_{|k|>N} |\xi_k-z|-n\int_{H_n}\log|z-t|d\nu_\G(t)&=&o(1)+O\left(
\sum_{|k|>N}k^{-2}\right)\\
&=&o(1)+O(N^{-1})=o(1).\eean
If we set here $z=0$, then we get
\[\log \prod_{|k|>N} |\xi_k|-n\int_{H_n}\log|t|d\nu_\G(t)=o(1).\]
Therefore,
\be \log |{\cal B}_n(z)|-n\int_{H_n}\log\frac{|z-t|}{|t|}d\nu_\G(t)=o(1).
\label{aah}\ee

As the whole integral
\[\int_{\G}\log\frac{|z-t|}{|t|}d\nu_\G(t)\]
is the value of the logarithmic potential of the equilibrium
measures $\nu_\G$ in two points of
$\G$, and since this logarithmic potential is constant on $\G$
by Frostman's theorem (\cite[Theorem 3.3.4]{Ransford}), we obtain
that this whole integral is 0, and so (\ref{aah}) is
equivalent to
\be \log |{\cal B}_n(z)|+n\int_{\G\setm H_n}\log\frac{|z-t|}{|t|}d\nu_\G(t)=o(1).
\label{aah1}\ee
The set $\G\setm H_n$ consists of the two small additional arcs $\ov{b_{-l_0}b_{-l_0+1}}$,
$\ov{b_{l_1-1}b_{l_1}}$ and of the "big" arc $\ov{b_{-N}b_{N+1}}$. The integral,
more precisely, $n$-times the integral, on
the left over the two small arcs is $o(1)$ (recall that $|z|$ is small,
while on those arcs $|t|$ stays away from 0), and now we estimate the integral over
the "big" arc, i.e. we consider
\be n\int_{\ov{b_{-N}b_{N+1}}}\log\frac{|z-t|}{|t|}d\nu_\G(t)
=n\int_{\Re b_{-N}}^{\Re b_{N+1}}\log\frac{|z-\g(t)|}{|\g(t)|}\o_\G(t)|\g'(t)|dt.
\label{kjh}\ee

By the definition of the points $b_k$ we have $b_1=(1/2+o(1))/n$,
\[\frac{N}{n}=\nu_\G(\ov{b_1b_{N+1}})=\int_{\Re b_1}^{\Re b_{N+1}}
\o_\G(\g(t))|\g'(t)|dt\]
and the same reasoning as in between (\ref{g1}) and (\ref{g2}) yields from this
that
\[\Re b_{N+1}=\frac{N+\frac12}{n\o_\G(0)}+O\left((N/n)^2\right).\]
We get similarly
\[\Re b_{-N}=\frac{-N+\frac12}{n\o_\G(0)}+O\left((N/n)^2\right).\]

If $z=\g(\zeta)=\z+i\g_2(\z)$, then in the integrand in (\ref{kjh}) we
have
\[\o_\G(\g(t))=\o_\G(0)+O(|t|),\quad |\g'(t)|=1+O(t^2),\]
\[\log|\g(t)|=\log(|t|+O(t^2))=\log|t|+O(|t|),\]
and (with $\g(t)=t+i\g_2(t)$)
\[\log|\g(\zeta)-\g(t)|=\log\sqrt{(\zeta-t)^2+(\g_2(\zeta)-\g_2(t))^2},\]
where
\[\g_2(\zeta)-\g_2(t) =\g_2'(\zeta)(\zeta-t)+O((\zeta-t)^2)=O(|\zeta||\zeta-t|)
+O((\zeta-t)^2).\]
Therefore, since $|\zeta|\le n^{-\tau}$ and  $|\z-t|\le CN/n$, we have
\[\log|\g(\zeta)-\g(t)|=\log|\z-t|+O(n^{-2\tau}) +O\left((N/n)^2\right).\]

By substituting all these into (\ref{kjh}) we obtain that
with
\[M_1=(-N+1/2)/n\o_\G(0),\qquad M_2=(N+1/2)/n\o_\G(0),\]
 the expression
in $(\ref{kjh})$ is equal to
\[n\int_{M_1+O((N/n)^2)}^{M_2+O((N/n)^2))}
\log\left|\frac{\zeta -t}{t}\right|\o_\G(0)dt=
n\int_{M_1}^{M_2}
\log\left|\frac{\zeta -t}{t}\right|\o_\G(0)dt+O((N/n)^2)\]
plus an error term which is at most
\bean nO\left((N/n)^2\right)
&+&nO\left(N/n\right)O(n^{-2\tau})+
nO\left((N/n)^3\right)\\
&=&O(n^{6(1-\tau)-1})+O(n^{3(1-\tau)-2\tau})+O(n^{9(1-\tau)-2})=o(1)\eean
if (\ref{prov1}) is satisfied.

From what we have done so far, it follows, say, for $0\le \zeta=\Re z \le n^{-\tau}$,
that with $M=N/n\o_\G(0)$
\[  \log |{\cal B}_n(z)|=o(1)-n\o_\G(0)\int_{-M}^{M}
(\log|\zeta -t|-\log |t|)dt.\]
But
\bean \int_{-M}^{M}
(\log|\zeta -t|-\log |t|)dt =\int_{M-\zeta}^{M}\log\frac{u+\zeta}{u}du
&=&\int_{M-\zeta}^{M}O\left(\frac\zeta{u}\right)du\\
&=&O(\z^2/M)=O(\z^2n/N),\eean
hence
\bean \log |{\cal B}_n(z)|=
O(n\zeta^2 (n/N))+o(1)&=&O(n^{2-2\tau -3(1-\tau)})+o(1)\\
&=&O(n^{-(1-\tau)})+o(1)=o(1)\eean
for all $z\in \G$, $|z|\le n^{-\tau}$, provided $\tau$ satisfies
(\ref{prov1}). Thus, in this case (i.e. when $|z|\le n^{-\tau}$)
\be |{\cal B}_n(z)|=1+o(1).\label{e4}\ee

All the reasonings so far used the assumption (\ref{prov1}), which
can be satisfied by choosing $\tau<1$ sufficiently close to 1.

\subsubsection{The square integral of ${\cal C}_n$ for $|z|\le n^{-\tau}$}
Using (\ref{e2}), (\ref{e3}) and (\ref{e4})
we can now estimate the square integral of $|{\cal C}_n(z)|$ against the measure
$\mu$ over the arc $\G\cap \D_{n^{-\tau}}(0)$. Indeed, let $\Re \G_n$
be the projection of $\G_n$ (see (\ref{Gamman})) onto the real line. Then $\Re \G_n$
is an interval $[-\a_n,\b_n]$ minus all the intervals
\[I_k=\left(\frac{j_{\b,k}}{n\pi\o_\G(0)}-\frac{1}{n^{1+\r}},
\frac{j_{\b,k}}{n\pi\o_\G(0)}+\frac{1}{n^{1+\r}}\right).\]
Here $\a_n,\b_n\sim n^{-\tau}$, and the $|k|$ in these latter
intervals is at most $2n^{1-\tau}$ (see (\ref{zeroasymp})). Therefore (use also that
\[d\mu(z)=w(z)|z|^\a ds_\G(z)=(1+o(1))w(0)|x|^\a dx\]
 and that $|\g'(t)|=1+o(1)$ for $t=O(n^{-\tau})$),
\bean \int_{\G\cap \D_{n^{-\tau}}(0)}|{\cal C}_n(z)|^2d\mu(z)
&=&(1+o(1))\int_{\Re \G_n} {\cal J}_\b(n\pi\o_\G(0) x)^2 w(0) |x|^\a dx\\
&+&C\int_{ \cup_k I_k} |x|^\a dx.\eean
In view of (\ref{jfinite}) the first integral is at most
\[\frac{(1+o(1))w(0)}{(n\pi \o_\G(0))^{\a+1}}L_\a\]
with the $L_\a$ defined in (\ref{lalpha}).
The second integral is at most
\[C\sum_{k=1}^{2n^{1-\tau}}\frac{1}{n^{1+\r}} \left(\frac{k}{n}\right)^{\a}
=O(n^{(1-\tau)(\a+1)-\a-1-\r})=o(n^{-\a-1})\]
because of (\ref{rhodef}).

Combining these we can see that
\be \limsup_{n\to\i}n^{\a+1}\int_{\G\cap \D_{n^{-\tau}}(0)}|{\cal C}_n(z)|^2d\mu(z)
\le \frac{w(0)L_\a}{(\pi \o_\G(0))^{\a+1}} .\label{veg1}\ee

\subsubsection{The estimate of ${\cal C}_n(z)$ for $|z|>n^{-\tau}$}
Now let $z\in \G$, $|z|>n^{-\tau}$, say $0\prec z$.
In view of (\ref{zeroasymp}) and of the definition of the points $a_k$ and $b_k$,
\[\nu_\G(\ov{0a_k})=\frac{k}{n}+O(n^{-1}),\qquad
\nu_\G(\ov{0b_k})=\frac{k}{n}+O(n^{-1}), \quad k>0.\]
A similar relation holds for negative $k$. These imply
\be a_k-b_k=O(n^{-1}),\label{iio}\ee
and so there is an
integer $T_0$ (independent of $n$) such that
\[b_{k-T_0}\prec a_k \prec b_{k+T_0}\qquad{\mbox{for $k>T_0$}}\]
and similarly
\[b_{-k-T_0}\prec a_{-k} \prec b_{-k+T_0}\qquad{\mbox{for $k>T_0$}}.\]
Since $\G$ is $C^2$-smooth, this implies
the existence of a $\d>0$ and a $T$ (actually, $T=T_0+1$ will suffice)
such that if $|z|\le \d$ (and $z$ satisfying also the previous
condition that $z\in \G$, $0\prec z$)
\begin{description}
\item[(i)]  then $z\preceq a_k$, $T<k\le N$ imply
\[ |z-a_k|<|z-\xi_{k+T}|,\qquad |a_k|>|\xi_{k-T}|\]
\item[(ii)] then $a_k\prec z$,  $T<k\le N$ imply
\[ |z-a_k|<|z-\xi_{k-T}|,\qquad |a_k|>|\xi_{k-T}|,\]
\item[(iii)] then $a_k\prec z$,  $-N\le k<-T$ imply
\[ |z-a_k|<|z-\xi_{k-T}|,\qquad |a_k|>|\xi_{k+T}|.\]
\end{description}

For this particular $z\in \G$, $0\prec z$, $\d>|z|> n^{-\tau}$
we shall compare the value $|{\cal C}_n(z)|$ with the value of a
modified polynomial $|\tilde {\cal C}_n(z)|$, which we obtain
as follows. Remove all factors $|1-z/a_k|$ from $|{\cal C}_n(z)|$
with $|k|\le T$, then
\begin{description}
\item[(i')] for $z\preceq a_k$,  $T<k\le N$   replace the factor
$|1-z/a_k|=|a_k-z|/|a_k|$ in $|{\cal C}_n(z)|$ by $|z-\xi_{k+T}|/|\xi_{k-T}|$
\item[(ii')] for $a_k\prec z$,  $T<k\le N$ replace the factor
$|a_k-z|/|a_k|$  in $|{\cal C}_n(z)|$ by $|z-\xi_{k-T}|/|\xi_{k-T}|$,
\item[(iii')] for  $a_k\prec z$,  $-N\le k<-T$  replace the factor
$|a_k-z|/|a_k|$  in $|{\cal C}_n(z)|$ by $|z-\xi_{k-T}|/|\xi_{k+T}|$.
\end{description}

Removing a factor $|1-z/a_k|$ from  $|{\cal C}_n(z)|$
decreases the absolute value of the polynomial by at most
a factor
$1/C_2n$ with some $C_2$ because each $a_k$, $k\not=0$ is $\ge c/n$ in absolute value.
On the other hand, the replacements in {\bf (i')}--{\bf (iii')}
increase the absolute value of the polynomial
at $z$ because of  {\bf (i)}--{\bf (iii)}. Hence,
\[|{\cal C}_n(z)|\le C_3n^{2T} |\tilde {\cal C}_n(z)|.\]
But $|\tilde {\cal C}_n(z)|$ has the form
\[|\tilde {\cal C}_n(z)|=\frac{\prod ^*|z-\xi_k|}{\prod ^{**}|\xi_k|},\]
where all $|z-\xi_k|$, $-l_0\le k<l_1$, appear in $\prod^*$
except at most $5T$ of them (at most $2T$ around $z$, at most $2T$ around 0,
and at most $T$ around $a_N$),  and where
some $|z-\xi_k|$ may appear
twice, but at most $T$ of them (all around $a_N$). Therefore,
if $z$ also satisfy $|z-\xi_k|\ge n^{-4}$ for all $-l_0\le k\le l_1-1$, then
\[\prod{}^*|z-\xi_k|\le \left(\prod_{k=-l_0, k\not=0}^{l_1-1} |z-\xi_k|\right)({\rm diam} \G)^{T}(n^4)^{5T}.\]

A similar reasoning gives that in $\prod^{**}$ all $|\xi_k|$
appear except perhaps $2T$ of them, and none of the
$\xi_k$ is repeated twice, therefore,
\[\prod{}^{**}|\xi_k|\ge \left(\prod_{k=-l_0, k\not=0}^{l_1-1} |\xi_k|\right)
\frac{1}{({\rm diam}(\G))^{2T}}.\]

Therefore,
\[|{\cal C}_n(z)|\le C_3n^{2T} |\tilde {\cal C}_n(z)|
\le C_4 n^{22T}\prod_{k=-l_0, k\not=0}^{l_1-1} \frac{|z-\xi_k|}{|\xi_k|}
.\]
But the product on the right is $|B_n(z)/B_n(0)|$
with $B_n$ from (\ref{bnn}), for which
the bound (\ref{bnbound}) is true.
Hence, we can conclude
\be |{\cal C}_n(z)|\le C_5n^{22T},\label{ccbound}\ee
under the condition that
$|z-\xi_k|\ge n^{-4}$ is true for all $k$.

This reasoning was made for $|z|\le \d$ and $0\prec z$.
The case $|z|\le \d$, $z\prec 0$ is completely similar.
On the other hand, if $z\in \G$,
$|z|> \d$, then we use
for all $-N\le k\le N$, $k\not=0$
\[ |z-a_k|=|z-\xi_{k}+O(n^{-1})|
=|z-\xi_{k}|(1+O(n^{-1}))|\]
because  all $a_k$, $\xi_k$ with
$|k|\le N$ lie of distance $\le CN/n=O(n^{3(1-\tau)-1})=o(1)$ from the origin.
Thus, if we replace every $|z-a_k|$ in ${\cal C}_n(z)$, $|k|\le N$, $k\not=0$
 by
$|z-\xi_k|$, then under this replacement, the value of
the polynomial can decrease by at most a factor
$(1+O(n^{-1}))^n=O(1)$. We also want to replace each $|a_k|$ by $|\xi_k|$:
\[\prod_{k=1}^N|a_k|\ge \prod_{k=1}^T|a_k|\prod_{k=T+1}^{N}|\xi_{k-T}|\ge
cn^{-T}\prod_{k=1}^{N}|\xi_k|\]
because  $|a_k|\ge |\xi_{k-T}|$ for $k>T$ and $|a_k|\ge c/n$ for all $k\not=0$.
A similar estimate holds for negative
values, by which we get
\[|{\cal C}_n(z)|\le C n^{2T}\prod_{k\not =0} \frac{|z-\xi_k|}{|\xi_k|}
\le CC_0n^{2T},\]
since the last product is just $| B_n(z)/B_n(0)|$
for which we can use (\ref{bnbound}).

Therefore, for such values (i.e. for $|z|>\d$)
we can again claim the bound (\ref{ccbound}).

All in all, we have proven
(\ref{ccbound}) on $\G$ with the exception of
those $z\in \G$ for which there is
a $\xi_k$ such that
$|z-\xi_k|<n^{-4}$. This exceptional set has
arc measure at most $Cn\cdot n^{-4}=Cn^{-3}$,
so an application of Lemma \ref{Remez} gives
that the bound
\be |{\cal C}_n(z)|\le C_5n^{22T},\label{ccbound1}\ee
holds throughout the whole
$\G$.

\subsubsection{Completion of the upper estimate for a single arc}
Let
\[P_n(z)={\cal C}_n(z)S_{n,0,\G}(z),\]
where ${\cal C}_n(z)$ is as in (\ref{cndef}) and
$S_{n,0,\G}(z)$ is the fast decreasing polynomial from Corollary
\ref{fdp_beta} for $K=\G$ and for the point 0.
This $P_n$ has degree $(1+o(1))n$, its value is 1 at the origin,
and $|P_n(z)|\le |{\cal C}_n(z)|$ on $\G$.
On $\G\cap \D_{n^{-\tau}}(0)$ we just use
$|P_n(z)|\le |{\cal C}_n(z)|$,
while for $|z|> n^{-\tau}$ we get from (\ref{ccbound1})
and (\ref{asd})
that
\[|P_n(z)|\le 2C_5n^{22T}C_\tau e^{- c_\tau n^{\tau_0}}=o(n^{-\a-1}).\]
As a consequence,
\[ \limsup_{n\to\i}n^{\a+1}\int_\G|P_n(z)|^2d\mu(z)
\le \limsup_{n\to\i}n^{\a+1}\int_{\G\cap \D_{n^{-\tau}}(0)}|{\cal C}_n(z)|^2d\mu(z).\]
Since the integral on the left is an upper bound for
$\l_{{\rm deg}(P_n)}(\mu,0)$,
we obtain from (\ref{veg1}) (use also (\ref{limsup}))
\be \limsup_{n\to\i}n^{\a+1}
\l_n(\mu,0)\le \frac{w(0)L_\a}{(\pi \o_\G(0))^{\a+1}} .\label{veg2}\ee
This proves one half of Proposition \ref{thmainprop*} for a single arc.\endproof

\subsection{The upper estimate for several components}
In this section, we sketch what to do with the
preceding reasoning when $\G$ may have several components
which can be $C^2$ Jordan curves or  arcs.
Let $\G_0,\ldots,\G_{k_0}$ be the different components
of $\G$, and assume that $z_0=0$ belongs to $\G_0$.
Assume, that this $\G_0$ is a Jordan
arc, actually this is the only case we shall use below
i.e. when $z_0$ belongs to an arc component of $\G$,
and the other components are Jordan curves.
On this $\G_0$ we introduce the points $a_k$ as
before, there is no need for them on the other components
of $\G$ (they played a role above only in a
small neighborhood of $0$).

On the other hand,
on the whole $\G$ we introduce the analogue of the points
$\xi_k$ by repeating the process in \cite[Section 2]{Totikarc}.
The outline is as follows.
Let $\theta_j=\nu_\G(\G_j)$,
consider the integers $n_j=[\theta_j n]$, and
divide each $\G_j$, $j>0$,
into $n_j$ arcs $I_k^j$ each having equal weight $\theta_j/n_j$
with respect to $\nu_\G$, i.e. $\nu_\G(I_k^j)=\theta_j/n_j$.
On $\G_0$
introduce the points $b_k$ as before, and the
arcs $I_k^0= \ov{b_kb_{k+1}}$.
 Let $\xi_k^j$ be the center of mass of the
arc $I_k^j$ with respect to $\nu_\G$, and consider the polynomial
\be R_n(z)=\prod_{j,k}(z-\xi_k^j)\label{rndef0}\ee
of degree at most $n+O(1)$.
 Now the polynomial
\be B_n(z)=R_n(z)/(z-\xi_0^0)\label{pdef0}\ee
will have similar properties as the $B_n$ before,
namely (\ref{bnbound}) is true,
see \cite[Section 2]{Totikarc}, in particular
see \cite[Propositions 2.4 and 2.5]{Totikarc}.

The rest of the argument in the preceding subsections
does not change: the components of $\G_l$, $l\ge 1$ are
far from $z_0=0$, the corresponding estimates
in the above proof on them is the same as
the estimate in the preceding subsections for $|z|> \d$.\endproof

\subsection{The lower estimate in Theorem \ref{thmain} on Jordan arcs}
In this section, the assumption is the same as before,
namely that $\G$ consists of finitely many
$C^2$-smooth Jordan arcs and curves, $z_0$ belongs
to an arc component of $\G$ and $\mu$ is given by
(\ref{general_mu_def}). Our aim is to prove
the necessary lower bound for $\l_n(\mu,z_0)$.

In this proof we shall closely follow
the proof of \cite[Theorem 3.1]{Totikarc}.

Let $\O$ be the unbounded component of $\ov \C\setm \G$,
and denote by $g_{\O}$ the Green's function
of $\O$ with respect to the pole at infinity (see
e.g. \cite[Sec. 4.4]{Ransford}).

Assume to the contrary, that there are infinitely many $n$ and for each
$n$ a polynomial $Q_n$ of degree at most $n$ such that $Q_n(z_0)=1$ and
\be n^{1+\a}\int |Q_n|^2d\mu<(1-\d)\frac{w(z_0)L_\a}{(\pi \o_\G(z_0))^{\a+1}}\label{q1}\ee
with some $\d>0$, where $L_\a$
was defined in (\ref{lalpha}). The strategy will be to show that this implies the following:
{\it there exists another system $\G^*$ of piecewise $C^2$-smooth
Jordan  curves  and an extension of $w$ to $\G^*$
such that $\G\subseteq \G^*$,
in a neighborhood $\D_0$ of $z_0$ we have $\G\cap \D_0=\G^*\cap \D_0$,
and for the  measure
\be d\mu^*(z) = w(z) |z-z_0|^{\alpha} ds_{\G^*}(z)\label{mu*}\ee
with support  $\G^*$
\be \liminf_{n\to\i}n^{1+\a}\l_n(\mu^*,z_0) <\frac{w(z_0)L_\a}{(\pi \o_{\G^*}(z_0))^{\a+1}}.
\label{th1**1}\ee}
Since this contradicts Proposition \ref{thmainprop}, (\ref{q1}) cannot
be true.

Let $\G_0,\ldots,\G_{k_0}$ be the connected components of $\G$, $\G_0$ being
the one that contains $z_0$. We shall only consider the case when
$\G_0$ is a Jordan arc, when $\G_0$ is a Jordan curve, the argument is
similar, see \cite[Section 3]{Totikarc}.

Let ${\bf n}_\pm$ be the two normals to $\G_0$ at $z_0$, and let
$A_\pm=\partial g_{\O}(z_0)/\partial {\bf n}_\pm$ be the corresponding
normal derivatives of the Green's function of $\O$
with pole at infinity. Assume, for example, that $A_+\ge A_-$. Note that
 $A_->0$, see \cite[Section 3]{Totikarc}.

Let $\e>0$ be an arbitrarily small number.
For each $\G_j$ that is a
 Jordan  arc, connect the two endpoints of $\G_j$
by another $C^2$-smooth Jordan arc $\G_j'$ that lies close to $\G_j$
so that we obtain a system $\G'$ of $k_0+1$ Jordan curves
with boundary $(\cup_j \G_j)\bigcup(\cup_j\G_j')$. Assume also that $\G_0'$ is
selected so that ${\bf n}_+$ is the outer normal to $\G'$ at $z_0$.
This can be done in such a way
that (with $\O'$ being the unbounded component of $\ov\C\setm \G'$)
\be \frac{\partial g_{\O'}(z_0)}{\partial {\bf n}_+}>
\frac{1}{1+\e}
\frac{\partial g_{\O}(z_0)}{\partial {\bf n}_+},\label{gk}\ee
see \cite[Section 3]{Totikarc}.

Select a small disk $\D_0$ about $z_0$ for which $\G'\cap \D_0=\G\cap \D_0$, and, as in
\cite[Section 3]{Totikarc}, choose  a lemniscate $\s=\{z\sep |T_N(z)|=1\}$
(with some polynomial $T_N$ of degree equal to some integer $N$)
such that $\G'$ lies in the interior of $\s$ (i.e.
in the union of the bounded components of $\C\setm \s$)
except for the point
$z_0$, where $\s$ and $\G'$ touch each other, and (with $\O_\s$ being the unbounded
component of $\ov\C\setm \s$)
\be \frac{\partial g_{\O_\s }(z_0)}{\partial {\bf n}_+}>\frac{1}{1+\e} \frac{\partial g_{\O}(z_0)}{\partial {\bf n}_+}.\label{gk0}\ee
For the Green's function associated with the
outer domain $\O_\s$ of $\s$
 we have (see \cite[(3.6)]{Totikarc})
\be \frac{\partial g_{\O_\s }(z_0)}{\partial {\bf n}_+}=\frac{|T_N'(z_0)|}{N}.\label{gn}\ee

\begin{figure}
\centering
\includegraphics[scale=0.5]{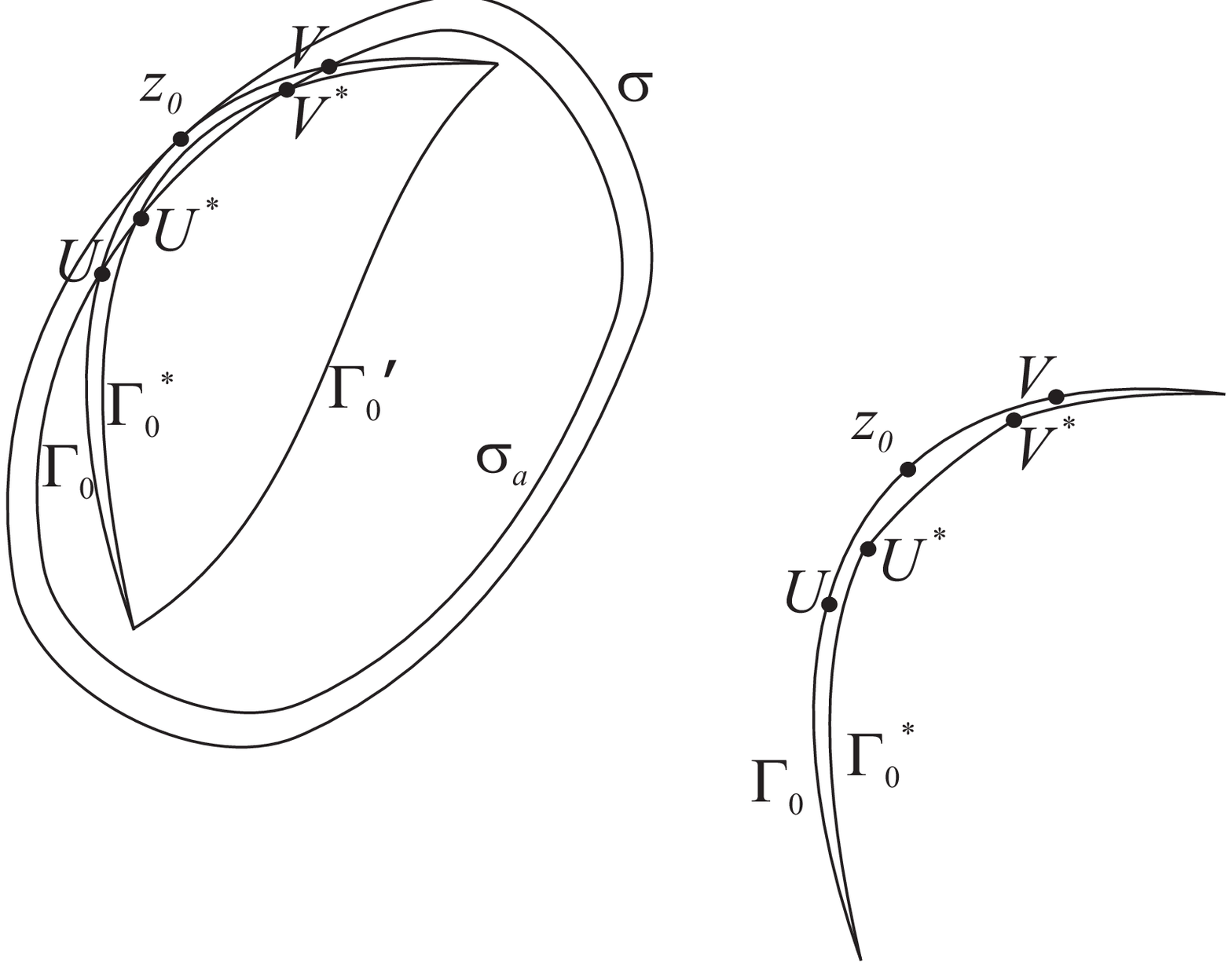}
\caption{\protect\label{fig0}}
\end{figure}
For a small $a$ let $\s_a$ be the lemniscate
$\s_a:=\{z\sep |T_N(z)|=e^{-a}\}$. According to \cite[Section 3]{Totikarc},
if $\D\subset \D_0$ is a fixed small neighborhood of $z_0$,
then for sufficiently small $a$ this $\s_a$ contains $\G'\setm \D$ in its interior, while in
$\D$ the two curves $\G_0$ and $\s_a$ intersect in two points $U,V$,
see Figure \ref{fig0}.
The points $U$ and $V$ are connected by the arc $\overline{UV}_{\G_0}$
on $\G_0$ and also by the arc
$\overline{UV}_{\s_a}$  on $\s_a$ (there are actually two such arcs on $\s_a$, we take
the one lying in $\D$). For each $\G_j$ which is a Jordan arc
connect the two endpoints
of $\G_j$ by a new $C^2$ Jordan arc $\G_j^*$ going inside $\G'$ so that
on $\G_j^*$ we have
\be g_{\O}(z)\le a^2,\qquad z\in \G_j^*.\label{gj*}\ee
In addition, $\G_0^*$ can be selected so that in $\D$ it intersects $\s_a$ in
two points $U^*,V^*$. Then $\overline{U^*V^*}_{\s_a}$ is a subarc of
$\overline{UV}_{\s_a}$. Let now $\G^*$ be the union of $\G$,
of the $\G_j^*$'s with $j>0$, of $\G_0^*\setm \overline{U^*V^*}_{\G_0^*}$
and of $\overline{U^*V^*}_{\s_a}$. This $\G^*$ is the union of
$k_0+1$ piecewise smooth Jordan curves.

Now let
\be m=\left[(1+\e)^7A_-n/NA_+\right]\label{mdef}\ee
and consider the polynomial
\be P_{n+mN}(z)=Q_n(z)T_N(z)^m\label{pdef**}\ee
on $\G^*$ with the $Q_n$ from (\ref{q1}), and let the measure $\mu^*$ be the measure
in (\ref{mu*}) on $\G^*$.
For the polynomials $P_{n+Nm}$ it was shown in
\cite[(3.18)-(3.20)]{Totikarc}  that
on $\G^*\setm  \left(\overline{UV}_{\G_0}\cup \overline{U^*V^*}_{\s_a}\right)$,
\be |P_{n+mN}(z)|\le C_1n^{1/2} e^{na^2-ma},\label{1}\ee
on $\overline{UV}_{\G_0}$
\be  |P_{n+mN}(z)|\le |Q_n(z)|,\label{2}\ee
and on $\overline{U^*V^*}_{\s_a}$
\be  |P_{n+mN}(z)|\le C_1 n^{1/2}\exp \left(n(1+\e)^4aA_-/|T_N'(z_0)|-ma\right),\label{3}\ee
where $C_1$ is a fixed constant. Here, by the choice of $m$ in (\ref{mdef}), and by (\ref{gk0}) and (\ref{gn}),
 the last exponent is at most
\[n\left(\frac{(1+\e)^5aA_-}{A_+N}-\frac{(1+\e)^6aA_-}{NA_+}\right)=-\e n\frac{(1+\e)^5aA_-}{NA_+}.\]

Fix $a$ so small that we have $a^2-aA_-/NA_+<0$. Then the
inequality $|T_N(z)|\le 1$ for $z\in \G^*$ and the
estimates (\ref{1})--(\ref{3})
yield
\[\l_{n+mN}(\mu^*,z_0)\le \int|P_{n+mN}|^2d\mu^*\le \int|Q_n|^2d\mu+O(n^{-\a-2}).\]
Hence, by (\ref{q1}),  for infinitely many $n$
\be (n+mN)^{\a+1}\l_{n+mN}(\mu^*,z_0)\le \left(\frac{n+mN}{n}\right)^{\a+1}\left(1-\d\right)\frac{w(z_0)L_\a}{(\pi \o_\G(z_0))^{\a+1}}+o(1).\label{lmn}\ee
Since (see \cite[(3.22)--(3.23)]{Totikarc})
\be \o_\G(z_0)=\frac{1}{2\pi}\left(\frac{\partial g_{\O}}{\partial{\bf n}_+}+\frac{\partial g_{\O}}{\partial{\bf n}_-}\right)=\frac{1}{2\pi}(A_++A_-)
\label{og0}\ee
and
\be  \o_{\G^*}(z_0)=\frac{1}{2\pi}
\frac{\partial g_{\O^*}(z_0)}{\partial {\bf n}_+}\le \frac{1}{2\pi}
\frac{\partial g_{\O}(z_0)}{\partial {\bf n}_+}= \frac{1}{2\pi} A_+,\label{og1}\ee
we have
\bean \left(\frac{n+mN}{n}\right)^{\a+1}
\left(1-\d\right)&&\hskip-15pt\frac{w(z_0)L_\a}{(\pi \o_\G(z_0))^{\a+1}}\le  \left(1+(1+\e)^7\frac{A_-}{A_+}\right)^{\a+1}\times\\
&&\hskip-1cm \left(1-\d\right)
\frac{w(z_0)L_\a}{(\pi \o_{\G^*}(z_0))^{\a+1}}
\left(\frac{A_+}{A_++A_-}\right)^{\a+1}\\
&&\le \left(1-\frac{\d}{2}\right)\frac{w(z_0)L_\a}{(\pi \o_{\G^*}(z_0))^{\a+1}}\eean
if $\e$ is sufficiently small. Therefore, (\ref{lmn}) implies
\[ \liminf_{n\to\i} (n+mN)^{\a+1}\l_{n+mN}(\mu^*,z_0)\le \left(1-\frac{\d}{2}\right)
\frac{w(z_0)L_\a}{(\pi \o_{\G^*}(z_0))^{\a+1}},\]
which is impossible according to Proposition \ref{thmainprop}.
This contradiction shows that (\ref{q1}) is impossible,
and so
\be \liminf_{n\to\i}n\l_n(\mu,z_0) \ge \frac{w(z_0)L_\a}{(\pi \o_\G(z_0))^{\a+1}}.
\label{veg4}\ee
follows.
\endproof

(\ref{veg2}) and (\ref{veg4}) prove Proposition \ref{thmainprop*}.

\sect{Proof of Theorem \ref{thmain}}
Let $\G$ be  as in the theorem, and let $\G=\cup_{k=0}^{k_0} \G^k$ be
the connected components of $\G$. Let $\O$ be the unbounded
connected component of $\ov\C\setm \G$.
We may assume that $z_0\in \G_0$. By assumption, $z_0$ lies
on a $C^2$-smooth arc $J$ of $\partial \O$, and there is an open set
$O$ such that $J=\G\cap O$. Let $\D_\d(z_0)$ be a small
disk about $z_0$ that lies in $O$ together with its closure.
Now there are two possibilities for $J$:
\begin{description}
\item[Type I] only one side of $J$ belongs to $\O$,
\item[Type II] both sides of $J$ belong to $\O$.
\end{description}
Type I occurs when $\G^0\setm \D_\d(z_0)$ is connected,
and Type II occurs when this is not the case.

Let $g_{\O}(z)$ be the Green's function for the domain
$\O$ with pole at infinity, which we assume
to be defined to be 0 outside $\O$.
The proof of Theorem \ref{thmain}
is based on  the following propositions.

\begin{prop}\label{propt1} If $J$ is of Type I, then
there is a sequence $\{\G_m\}$ of sets consisting of
disjoint $C^2$-smooth Jordan curves $\G_m^k$, $k=0,1,\ldots,k_0$,
such that with some positive sequence $\{\e_m\}$ tending to 0
we have
\begin{description}
\item[(i)] $z_0\in \G_m^0$ and $\G\cap \ov{\D_\d(z_0)}=
 \G_m\cap \ov{\D_\d(z_0)}$,
\item[(ii)]
\[\frac1{1+\e_m}\o_\G(z_0)\le \o_{\G_m} (z_0)\le (1+\e_m)\o_\G(z_0),\]
\item[(iii)]
\[\max _{x\in \G_m}g_{\O}(z)\le \e_m,\qquad
\max _{x\in \G}g_{\O_m}(z)\le \e_m.\]
\item[(iv)] The Hausdorff distance of the outer boundaries
of $\G$ and $\G_m$ tends to 0 as $m\to\i$.
\end{description}
\end{prop}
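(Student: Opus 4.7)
Construct $\G_m$ by thickening each component of $\G$ into a $C^2$-smooth Jordan curve, then surgically splicing the unchanged arc $J$ in near $z_0$. For each component $\G^k$ let
\[
K^k=\G^k\cup\bigcup\{\text{bounded components of }\C\setm\G^k\},
\]
a compact continuum with $\partial K^k\subset\G^k$. For $\e=\e_m\to 0$ put $K^k_\e=\{z\sep\mathrm{dist}(z,K^k)\le\e\}$; for small $\e$ the $K^k_\e$ are pairwise disjoint and each $\partial K^k_\e$ is a Lipschitz Jordan curve which can be mollified to a $C^2$ Jordan curve $\widetilde\G_m^k$ within Hausdorff distance $2\e_m$ of $\partial K^k$. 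Since $J$ is Type I, locally $J\subset\partial K^0$, and the outer $\e_m$-thickening of $K^0$ across $J$ places $\widetilde\G_m^0$ at distance $\asymp\e_m$ from $J$ on the $\O$-side near $z_0$.

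\textbf{Surgery near $z_0$.} Pick $\d_1>\d$ with $\D_{\d_1}(z_0)\subset O$ (possible because $\ov{\D_\d(z_0)}\subset O$). In $\D_{\d_1}(z_0)$ write $\widetilde\G_m^0$ as a graph $u_m$ over $J$ in the normal direction (valid for large $m$ since $\widetilde\G_m^0$ is $O(\e_m)$-close to $J$ there), and replace $\widetilde\G_m^0\cap\D_{\d_1}(z_0)$ by the graph of $\chi u_m$, where $\chi$ is a $C^2$ cutoff with $\chi\equiv 0$ on $\D_\d(z_0)$ and $\chi\equiv 1$ outside $\D_{\d_1}(z_0)$. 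The result is a $C^2$ Jordan curve $\G_m^0$ agreeing with $J$ on $\ov{\D_\d(z_0)}$ and with $\widetilde\G_m^0$ outside $\D_{\d_1}(z_0)$. Set $\G_m^k=\widetilde\G_m^k$ for $k\ge 1$ and $\G_m=\bigcup_k\G_m^k$. Property (i) is immediate; (iv) follows from $\widetilde\G_m^k\to\partial K^k$ in Hausdorff distance together with $\partial\O=\bigcup_k\partial K^k$ and $\partial\O_m=\G_m$.

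\textbf{Property (iii).} Since $\G$ is a finite union of continua it is regular for the Dirichlet problem, so $g_\O$ is continuous on $\C$ with $g_\O\equiv 0$ on $\G$; uniform continuity plus the Hausdorff convergence $\G_m\to\bigcup_k\partial K^k\subset\G$ yields $\max_{\G_m}g_\O\to 0$. Conversely, $\G\subset\bigcup_k K^k$ lies inside the bounded components of $\C\setm\G_m$ for large $m$, so with the standard convention $g_{\O_m}\equiv 0$ off $\O_m$ we have $g_{\O_m}\equiv 0$ on $\G$ eventually, giving the second bound in (iii) trivially.

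\textbf{The main point is (ii).} Type I provides a single unit normal ${\bf n}$ at $z_0$ pointing into $\O$, and $2\pi\o_\G(z_0)=\partial g_\O(z_0)/\partial{\bf n}$. Because $\G_m=\G$ in $\D_\d(z_0)$, locally $\O_m\cap\D_\d(z_0)=\O\cap\D_\d(z_0)$, so the analogous formula $2\pi\o_{\G_m}(z_0)=\partial g_{\O_m}(z_0)/\partial{\bf n}$ holds with the same ${\bf n}$. It remains to prove $\partial g_{\O_m}(z_0)/\partial{\bf n}\to\partial g_\O(z_0)/\partial{\bf n}$. Since $\O_m\subset\O$ and every compact subset of $\O$ is eventually contained in $\O_m$, Carath\'eodory kernel convergence gives $g_{\O_m}\to g_\O$ locally uniformly on $\O$. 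Take a half-disk $D\subset\O\cap\D_{\d/2}(z_0)$ bounded by a piece of $J$ and an interior arc $\gamma$: both $g_{\O_m}$ and $g_\O$ are harmonic on $D$, vanish on $J\cap\partial D$, and have uniformly convergent boundary values on $\gamma$. A Schauder boundary estimate, exploiting the common $C^2$ boundary piece $J$ shared by all $\O_m$ near $z_0$, upgrades this uniform convergence to $C^1$-convergence up to $J$ near $z_0$, giving the desired convergence of normal derivatives. \emph{This boundary $C^1$-convergence is the main technical obstacle; the whole construction was designed to supply the common $C^2$ boundary piece that makes the Schauder estimate applicable.}
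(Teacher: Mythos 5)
Your construction and the overall strategy mirror the paper's: push each component slightly into $\O$ to obtain disjoint $C^2$ Jordan curves, keep $\G$ unchanged on $\ov{\D_\d(z_0)}$ by a local splice, and then deduce (ii) from convergence of the normal derivative of the Green's function and (iii) from approximate vanishing of Green's functions on nearby sets. The two places where you diverge are in the tools, not the plan. For (iii) you invoke only continuity of $g_\O$ on $\C$ (via regularity of the continua) together with Hausdorff convergence, which suffices; the paper instead uses the uniform H\"older-$1/2$ bound $g_{\ov\C\setm S}(z)\le C\,\mathrm{dist}(z,S)^{1/2}$ of its Lemma~\ref{unif} to get a quantitative $\e_m=O(m^{-1/2})$. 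For (ii) the key step is the passage from locally uniform interior convergence $g_{\O_m}\to g_\O$ to convergence of $\partial_{\bf n}g_{\O_m}(z_0)\to\partial_{\bf n}g_\O(z_0)$; the paper obtains the locally uniform convergence from monotonicity plus capacity convergence plus Harnack and then cites \cite[Lemma~7.1]{NT}, while you obtain the locally uniform convergence from $\O_m\uparrow\O$ and then upgrade to $C^1$ up to the common $C^2$ boundary piece $J$ by a boundary Schauder estimate applied to $g_\O-g_{\O_m}$ on a half-disk. The Schauder route is self-contained and works because $J$ is $C^2$ and both Green's functions vanish there, so it is a legitimate alternative to the \cite{NT} lemma.

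One small imprecision worth flagging: the claim that $\partial K^k_\e$ is a \emph{Lipschitz} Jordan curve for small $\e$ is optimistic -- the boundary of an $\e$-neighborhood of a general continuum can have cusp-type singularities for some $\e$. This is not a logical gap (the paper's own construction is equally informal at this point), but if you want a clean construction you could take $\widetilde\G_m^k$ to be a level curve $\{\,|\Phi_k|=1+\e_m\,\}$ of the conformal map $\Phi_k$ of $\ov\C\setm K^k$ onto the exterior of the unit disk, or a level curve $\{g_{\ov\C\setm K^k}=\e_m\}$, which are automatically analytic Jordan curves; the splice near $z_0$ then proceeds exactly as you describe.
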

  Property {\bf (i)}
means that  in the $\d$-neighborhood of $z_0$
the sets $\G_m$ and $\G$ coincide.

\begin{prop}\label{propt2} If $J$ is of Type II, then
there is a sequence $\{\G_m\}$ of sets consisting of $\G_m^0:=J\cap \ov{\D_\d(z_0)}$
and of disjoint $C^2$  Jordan curves $\G_m^k$, $k=1,\ldots,k_0+2$,
lying in the component of $\G_m^0$ such that {\bf (i)}--{\bf (iv)} above hold.
\end{prop}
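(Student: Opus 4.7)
The plan is to keep the arc near $z_0$ exactly as given and replace every other piece of $\Gamma$ by a thin $C^2$-smooth Jordan curve enclosing it, then verify the four listed properties; the only delicate step will be convergence of the normal derivatives of the Green's function at $z_0$.

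First I identify the geometry. In Type II both sides of $J$ lie in $\Omega$, which forces $\Gamma^0$ (the component containing $z_0$) to be a Jordan arc, since a Jordan curve would contribute a bounded interior side of $J$ not belonging to $\Omega$; hence $J$ is a subarc of $\Gamma^0$ with $z_0$ in its interior. Denote the two endpoints of $\Gamma_m^0 := J\cap \overline{\Delta_\delta(z_0)}$ by $E^\pm \in \partial \Delta_\delta(z_0)$; then $\Gamma^0 \setminus \Delta_\delta(z_0)$ splits into two ``tails'' $T^\pm$, each having $E^\pm$ as one endpoint.

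The construction: keep $\Gamma_m^0$ as given. For each tail $T^\pm$ let $T_m^\pm$ be the subarc obtained by removing a piece of length $1/m$ near $E^\pm$, and take $\Gamma_m^\pm$ to be a $C^2$-smooth Jordan curve lying in the $(1/m)$-neighborhood of $T_m^\pm$ and enclosing $T_m^\pm$ strictly in its bounded complementary region (for instance, a smoothing of the boundary of a thin tubular neighborhood); the truncation near $E^\pm$ keeps $\Gamma_m^\pm$ a positive distance from $\Gamma_m^0$. For each remaining component $\Gamma^j$, $j=1,\ldots,k_0$, let $\Gamma_m^j$ be a $C^2$-smooth Jordan curve in the $(1/m)$-neighborhood of $\Gamma^j$ enclosing $\Gamma^j$ in its bounded complementary region. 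For $m$ sufficiently large the resulting $k_0 + 2$ Jordan curves are pairwise disjoint and disjoint from $\Gamma_m^0$, giving a well-defined $\Gamma_m$ with the prescribed structure.

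Verification of (i), (iii), (iv) is routine. Property (i) is built into the construction, and (iv) follows because each Jordan curve lies within $O(1/m)$ of the piece of $\Gamma$ it approximates and conversely. For (iii), $g_\Omega$ vanishes on $\Gamma$ and is continuous, so uniform continuity on a compact neighborhood of $\Gamma$ gives $g_\Omega \le \varepsilon_m$ on $\Gamma_m$ once $m$ is large; analogously $g_{\Omega_m}$ vanishes on $\Gamma_m$, and every point of $\Gamma$ lies within $O(1/m)$ of $\Gamma_m$ (even the small truncated pieces $T^\pm \setminus T_m^\pm$ lie within $1/m$ of $\Gamma_m^\pm$ and of $\Gamma_m^0$), so $g_{\Omega_m} \le \varepsilon_m$ on $\Gamma$.

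The main obstacle is (ii). Using the Type II identity $\omega_\Gamma(z_0) = \frac{1}{2\pi}(A_+ + A_-)$ with $A_\pm = \partial g_\Omega(z_0)/\partial \mathbf{n}_\pm$, and the analogous identity for $\Gamma_m$ (with the same normals, since by (i) the arc $J$ is unchanged near $z_0$ and both sides of $J$ still belong to $\Omega_m$), it suffices to prove $\partial g_{\Omega_m}(z_0)/\partial \mathbf{n}_\pm \to A_\pm$. By (i) the boundary data of $g_{\Omega_m}$ and $g_\Omega$ agree exactly on $\Gamma \cap \overline{\Delta_\delta(z_0)}$, while (iii) and the Hausdorff convergence in (iv) control the discrepancy on the rest of the boundaries. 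A standard potential-theoretic continuity argument, applying the maximum principle to $g_{\Omega_m} - g_\Omega$ as the solution of a Dirichlet problem with small boundary data, yields locally uniform convergence $g_{\Omega_m} \to g_\Omega$ on a neighborhood of $z_0$; classical boundary regularity for harmonic functions at $C^2$ boundary points then upgrades this to $C^1$-convergence, giving the required convergence of normal derivatives. This upgrade from $C^0$ to $C^1$ convergence via boundary regularity is the technical heart of the proof.
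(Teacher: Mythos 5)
Your overall strategy (keep the arc $J\cap\overline{\Delta_\delta(z_0)}$ fixed, replace everything else by enclosing Jordan curves, and pass convergence of Green's functions to convergence of normal derivatives at $z_0$) is parallel to the paper's, but the construction and the potential-theoretic machinery differ in ways worth flagging.

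On the construction: the paper does \emph{not} truncate the tails and enclose them; instead it first removes two small subarcs $J_{1,m}, J_{2,m}\subset J$ of diameter $1/m$ lying just outside $\Delta_\delta(z_0)$, passes to the intermediate set $\Gamma\setminus(J_{1,m}\cup J_{2,m})$, and only then approximates the remaining continua $\Gamma^{0,1},\Gamma^{0,2},\Gamma^1,\dots,\Gamma^{k_0}$ by $C^2$ Jordan curves \emph{in a second limit $\tau\to0$}. This two-step arrangement is not cosmetic: the intermediate domains $\Omega\cup J_{1,m}\cup J_{2,m}$ form a shrinking sequence, so the differences of Green's functions are monotone and positive, and Harnack's theorem applies directly. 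Your single-step construction destroys this monotonicity, and you correctly compensate with a maximum-principle argument (using the $\varepsilon_m$-bounds from {\bf(iii)} on the symmetric difference of the boundaries and the convergence of capacities at $\infty$); that substitute works, though you should note explicitly that the domain on which you apply the maximum principle is the unbounded component of $\Omega\cap\Omega_m\cup\{\infty\}$.

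One genuine error: the claim that in Type II ``$\Gamma^0$ is forced to be a Jordan arc'' is not true. In Theorem \ref{thmain} the components of $\Gamma$ are only continua; Type II merely means $\Gamma^0\setminus\Delta_\delta(z_0)$ is disconnected, and its two pieces $\Gamma^{0,1},\Gamma^{0,2}$ can be arbitrary continua, not arcs with free endpoints. This does not break your construction — you may still take thin tubular neighborhoods of continua and smooth their boundaries — but the ``two tails $T^\pm$ with endpoint $E^\pm$'' picture and the notion of ``removing a piece of length $1/m$'' need to be recast in terms of continua (e.g.\ excising from $J$ the subarc of diameter $1/m$ abutting $\overline{\Delta_\delta(z_0)}$, as the paper does).

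On the normal-derivative step, which you correctly call the technical heart: the paper does not re-derive this from generic elliptic boundary regularity but cites a specific lemma (Lemma 7.1 of \cite{NT}, via the proof of Proposition \ref{propt1}) for the statement that locally uniform convergence of $g_{\Omega}-g_{\Omega_m}$, together with the fact that these differences are harmonic in a neighborhood of $z_0$ on both sides of the common $C^2$ arc and vanish on it, gives convergence of the one-sided normal derivatives at $z_0$. Your ``classical boundary regularity upgrades $C^0$ to $C^1$'' is the correct idea, but it should be made concrete — e.g.\ locally straighten $J$ near $z_0$, apply Schwarz reflection across the flattened boundary, and use interior gradient estimates for harmonic functions. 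Also note that in Type II you must run this argument separately on the two sides of $J$ and then use the formula $\omega_\Gamma(z_0)=\frac{1}{2\pi}(A_++A_-)$ from (\ref{norm2}); your draft correctly sets this up. In summary: the route is viable and close in spirit to the paper, but fix the false ``$\Gamma^0$ is an arc'' claim and fill in the reflection/gradient-estimate argument for normal-derivative convergence rather than leaving it as a slogan.
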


Pending the proofs of these propositions now we complete the proof of
Theorem \ref{thmain}. It follows from {\bf (i)} and {\bf(iv)}
that there is a compact set $K$ that contains $\G$ and all $\G_m$
such that $z_0$ lies on the outer boundary of $K$, and
in a neighborhood of $z_0$ the outer boundary of $K$ and $\G$
are the same. In particular, there is a circle in the unbounded
component of $\ov\C\setm K$ that contains $z_0$ on its boundary,
so we can apply Proposition \ref{fdp} to $K$ and $z_0$.

Fix an $m$ and consider the set $\G_m$ either from
Proposition \ref{propt1} if $J$ is of Type I or from Proposition \ref{propt2}
if $J$ is of Type II. We define the measure
\[\mu_m(z)=w(z)|z-z_0|^\a ds_{\G_m}(z),\]
where $w$ is a continuous and positive extension of the original
$w$ (that existed on $J$) from $J\cap \ov{\D_\d(z_0)}$
to $\G_m$. It follows from the Erd\H{o}s-Tur\'an criterion
\cite[Theorem 4.1.1]{StahlTotik}
that this $\mu_m$ is in the ${\bf Reg}$ class.

For positive integer $n$
let $P_n$ be the extremal polynomial
of degree $n$ for $\l_n(\mu,z)$.
Consider the polynomial $S_{4n\e_m/c_2\d^2,z_0,K}(z)$ from Proposition \ref{fdp}
with $\g=2$ (here $c_2$ is the constant from
Proposition \ref{fdp}), and form the product $Q_n(z)=P_n(z) S_{4n\e_m/c_2\d^2,z_0,K}(z)$.
This is a polynomial of degree at most $n(1+4\e_m/c_2\d^2)$ which takes
the value 1 at $z_0$. On $\G_m \cap \ov{\D_\d(z_0)}=\G \cap \ov{\D_\d(z_0)}$ we have
\be \int_{\G_m \cap \ov{\D_\d(z_0)}}|Q_n(z)|^2\le
\int_{\G \cap \ov{\D_\d(z_0)}}|P_n(z)|^2\le \l_n(\mu,z_0).
\label{gnet}\ee

Since the $L^2(\mu)$-norms of $\{P_n\}$ are
bounded, it follows from
$\mu\in {\bf Reg}$ that there is an $n_m$ such that if
$n\ge n_m$, then we have
\[\|P_n\|_\G\le e^{\e_m n}.\]
Then, by the
 Bernstein-Walsh lemma (Lemma \ref{BW}) and by property
 {\bf (iii)}, we have
 for all $z\in \G_m$
\[ |P_n(z)|\le \|P_n\|_\G e^{ng_\O(z)}
\le e^{2n\e_m}.\]
Therefore, (\ref{kineq}) and $\G_m\subseteq K$ imply that for
$z\in \G_m\setm \ov{\D_\d(z_0)}$
\[|Q_n(z)|\le \exp(2n\e_m-[4n\e_m/c_2\d^2]c_2\d^2)<e^{-n\e_m}\]
if $n$ is sufficiently large. As a consequence, the integral
of $Q_n$ over $\G_m\setm \ov{\D_\d(z_0)}$ is exponentially
small in $n$, which, combined with (\ref{gnet}), yields that
\[\l_{n(1+4\e_m/c_2\d^2)}(\mu_m,z_0)\le \l_n(\mu,z_0)+o(n^{-(1+\a)}).\]
Multiply here both sides by $n(1+4\e_m/c_2\d^2)^{1+\a}$ and
let $n$ tend to infinity. If we apply that Theorem \ref{thmain} has already
been proven for $\G_m$ and for the measure
$\mu_m$ (see Proposition \ref{thmainprop*}),
we can conclude (use also (\ref{liminf}))
\[\liminf_{n\to\i}n^{\a+1}\l_{n}(\mu,z_0)\ge \frac{1}{1+4\e_m/c_2\d^2}\frac{w(z_0)}{(\pi\o_{\G_m}(z_0))^{\a+1}}L_\a\]
(with the $L_\a$ from (\ref{lalpha})), and an application
of property {\bf (ii)} yields then
\[\liminf_{n\to\i}n^{\a+1}\l_{n}(\mu,z_0)\ge
\frac{1}{(1+\e_m)^{|\a|+1}(1+4\e_m/c_2\d^2)}\frac{w(z_0)}{(\pi\o_\G(z_0))^{\a+1}}L_\a.\]

If we reverse the roles of $\G$ and $\G_m$ in this argument, then we
can similarly conclude
\[\limsup_{n\to\i}n^{\a+1}\l_{n}(\mu,z_0)\le
(1+\e_m)^{|\a|+1}(1+4\e_m/c_2\d^2)\frac{w(z_0)}{(\pi \o_\G(z_0))^2}L_\a.\]
Finally, in these last two relations we can let $m\to\i$, and as
$\e_m\to 0$, the limit in Theorem \ref{thmain} follows.

Thus, it is left to prove
Propositions \ref{propt1} and \ref{propt2}.

\subsection{Proof of Proposition \ref{propt1}}
Both in this proof and in the next one we shall
use that if $\O_1\subset \O_2$ (say both with
a smooth boundary),  and $z\in \O_1$, then
$g_{\O_1}(z)\le g_{\O_2}(z)$. As a consequence,
if $z$ is a common point on their boundaries, then
the normal derivative of $g_{\O_1}$ (the normal pointing
inside $\O_1$) is not larger than the same normal derivative
of $g_{\O_2}$ (because both Green's functions vanish
on the common boundary). Since, modulo a factor $1/2\pi$, the
normal derivatives yield
the equilibrium densities (see formulae (\ref{norm1}) and
(\ref{norm2}) below), it also follows that
if $\G_1\subset \G_2$, then on (an arc of) $\G_1$ the equilibrium
density $\o_{\G_2}$ is at most as least as large as
the equilibrium density $\o_{\G_1}$ (see
also \cite[Theorem IV.1.6(e)]{SaffTotik},
according to which the equilibrium measure
for $\G_1$ is the balayage onto $\G_1$
of the equilibrium
measure of $\G_2$).

Choose, for each $m$ and $1\le k\le k_0$, $C^2$-smooth Jordan curves $\G_m^k$
so that they lie in $\O$ and are of distance $<1/m$ from $\G^k$.
For $k=0$ the choice is somewhat different: let $\G_m^0$ be
a $C^2$ Jordan curve that lies in $\ov \O$, its distance from
$\G^0$ is smaller than $1/m$, $J\cap \ov {\D_\d(z_0)}\subset
\G_m^0$, and $\G_m^0\setm J$ lies in $\O$, see Figure \ref{fig1}.
We can select these so that the outer domains $\O_m$ of $\G_m$
are increasing with $m$.
From this construction it is clear that {\bf (i)} and {\bf (iv)}
are true.
\begin{figure}
\centering
\includegraphics[scale=0.6]{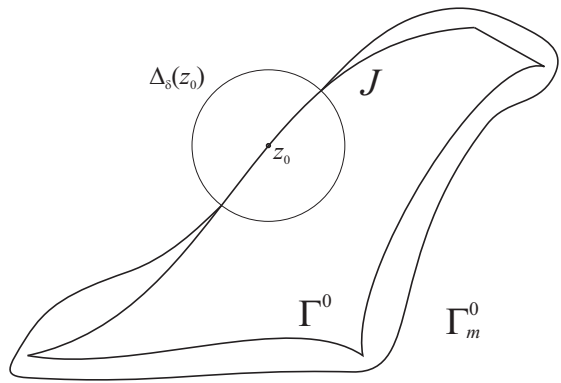}
\caption{\protect\label{fig1}The arc $J$ and the selection of $\G_m^0$}
\end{figure}
Now $\ov\C\setm \O_m$ (the so called
polynomial convex hull of $\G_m$) is a shrinking sequence of
compact sets, the intersection of which is $\ov\C\setm \O$.
Therefore, if $\c$ denotes the logarithmic capacity, then we have
(see \cite[Theorem 5.1.3]{Ransford}) $\c(\ov\C\setm \O_m)\to \c(\ov\C\setm \O)$.
Since
$\{g_{\O}(z)-g_{\O_m}(z)\}$ is a decreasing sequence of positive harmonic functions
(more precisely, this sequence starting from the term $g_{\O}(z)-g_{\O_l}(z)$ is
harmonic in $\O_l$)
for which (see \cite[Theorem 5.2.1]{Ransford})
\[g_{\O}(\i)-g_{\O_m}(\i)=\log\frac1{\c(\ov\C\setm \O)}-\frac{1}{\c(\ov\C\setm \O_m)}\to 0,\]
we obtain from Harnack's theorem (\cite[Theorem 1.3.9]{Ransford}) that
$g_{\O}(z)-g_{\O_m}(z)\to 0$ locally uniformly on compact subsets of
$\O$. This, and the fact that this sequence is defined in $\O\cap \D_\d(z_0)$
and has boundary values identically 0 on $\partial \O\cap \D_\d(z_0)$,
then implies (see e.g. \cite[Lemma 7.1]{NT}) the following: if ${\bf n}$ denotes the normal to $z_0$ in the direction
of $\O$ then, as $m\to\i$,
\[ \frac{\partial g_{\O_m}(z_0)}{\partial {\bf n}}\to \frac{\partial g_{\O}(z_0)}{\partial {\bf n}}.\]
But in the Type I situation we have  (see \cite[II.(4.1)]{Nevanlinna} combined
with \cite[Theorem 4.3.14]{Ransford} or \cite[Theorem IV.2.3]{SaffTotik} and \cite[(I.4.8)]{SaffTotik})
\be \o_\G(z_0)=\frac1{2\pi}  \frac{\partial g_{\O}(z_0)}{\partial {\bf n}},\label{norm1}\ee
and a similar formula is true for $\o_{\G_m}$, hence
\[\o_{\G_m}(z_0)\to \o_\G(z_0),\qquad m\to\i.\]
This takes care of {\bf (ii)}.

Finally, we use the following statement from \cite[Theorem 7.1]{Totiktrans}:
\begin{lem} \label{unif}  Let $S$ be a continuum. Then
the Green's function $g_{\ov\C \setm S}(z,\i)$
is uniformly H\"older $1/2$ continuous
 on $S$, i.e. if
$z_0\in \O$, then
\be g_{\ov\C \setm S}(z_0,\i)\le C{\rm dist}(z_0,S)^{1/2}.\label{m1}\ee
Furthermore, here $C$ can be chosen to depend only on the diameter of $S$.
\end{lem}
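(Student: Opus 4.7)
The plan is to compare $g_{\ov\C\setm S}$ with the Green's function of the complement of a line segment, for which the $\sqrt{\,\cdot\,}$ behaviour at the endpoint is explicit, by invoking a circular symmetrization (Beurling projection) inequality.

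First, a conformal rescaling $z \mapsto z/D$ with $D = \mathrm{diam}(S)$ reduces the problem to the normalized case $D = 1$: Green's functions with pole at infinity are invariant under this rescaling while the distance $\mathrm{dist}(z_0, S)$ scales to $\mathrm{dist}(z_0, S)/D$, so the constant $C$ in \eqref{m1} absorbs the diameter dependence as claimed. Assume now $D=1$, fix $z_0 \in \O$, and set $d = \mathrm{dist}(z_0, S)$. For $d$ bounded away from $0$ (say $d \geq 1/4$) a crude global upper bound on $g_{\ov\C\setm S}$ on bounded sets does the job, so assume $d < 1/4$. Pick $w_0 \in S$ nearest to $z_0$, and after translation and rotation assume $w_0 = 0$ and $z_0 = d > 0$. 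Because $S$ is a continuum of diameter $1$, some $w_1 \in S$ satisfies $|w_1| \ge 1/2$, and by connectedness $S$ meets every circle $\{|z|=r\}$ for $0<r\le1/2$. Consequently the circular projection $S^\sharp := \{|w| : w \in S\}$ contains the segment $[0,1/2]$.

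Next I would invoke Beurling's projection theorem, which states that circular symmetrization dominates the Green's function:
\[
  g_{\ov\C\setm S}(z_0, \infty) \;\le\; g_{\ov\C\setm S^\sharp}(-d, \infty).
\]
Combined with monotonicity of Green's functions in their domains and $[0, 1/2]\subseteq S^\sharp$, this yields
\[
  g_{\ov\C\setm S}(z_0, \infty) \;\le\; g_{\ov\C\setm [0,1/2]}(-d, \infty).
\]
Finally the right-hand side is explicit: the affine map $w = 4\zeta - 1$ carries $[0,1/2]$ onto $[-1,1]$, and using $g_{\ov\C\setm[-1,1]}(w,\infty) = \log|w + \sqrt{w^2-1}|$ (branch positive at $+\infty$) at $w = -1-4d$ one computes $|w + \sqrt{w^2-1}| = 1 + 4d + \sqrt{16d^2+8d} = 1 + 2\sqrt{2d} + O(d)$, hence $g_{\ov\C\setm[0,1/2]}(-d,\infty) \le C\sqrt d$ for a universal constant $C$. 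Undoing the rescaling completes the proof.

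The main obstacle is the correct invocation of Beurling's projection theorem in precisely the form producing an upper bound on the Green's function of the original set by that of its radial projection. This is a classical but nontrivial input from potential theory; granted it, the remaining pieces — the observation that connectedness of $S$ forces $S^\sharp \supseteq [0,1/2]$, and the one-line expansion for the segment — are routine. An alternative route, avoiding Beurling, is to construct a direct superharmonic barrier at the nearest point $w_0$ of the form $z \mapsto C\,\mathrm{Re}\sqrt{z-w_0}$ (with an appropriate branch cut running into $S$) and apply the maximum principle in $\O \cap D(w_0, 1/2)$; this is essentially the same computation repackaged.
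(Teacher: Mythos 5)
The paper proves nothing here --- Lemma \ref{unif} is quoted verbatim from \cite[Theorem 7.1]{Totiktrans}, so there is no in-paper argument to compare against. Your proof is correct and, as best I can tell, follows essentially the route of that reference. The elementary steps are all sound: Green's functions with pole at infinity satisfy $g_{\ov\C\setminus(K/D)}(z/D)=g_{\ov\C\setminus K}(z)$, so normalizing to unit diameter is legitimate and yields a constant of the form $C_0/\sqrt D$; a continuum of unit diameter containing the nearest boundary point $w_0$ must meet every circle $|z-w_0|=r$ for $0<r\le 1/2$ by connectedness, so its circular projection from $w_0$ contains $[0,1/2]$; and the Joukowski computation for the segment gives the $\sqrt d$ growth at an endpoint. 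The one genuinely nontrivial input, which you correctly flag, is the Green's-function form of Beurling's projection theorem, $g_{\ov\C\setminus K}(z)\le g_{\ov\C\setminus K^\sharp}(-|z|)$ with $K^\sharp=\{|w|:w\in K\}$. Two remarks. First, take care with terminology: this is \emph{projection}, not circular symmetrization (symmetrization replaces each circular slice by a concentric arc of equal length, which would not collapse the set onto a ray), and only the projection form yields the direct comparison with a segment. Second, your ``alternative barrier'' $z\mapsto C\,\Re\sqrt{z-w_0}$ does not in fact sidestep that input: the branch cut of $\sqrt{z-w_0}$ is a straight ray, and nothing forces a ray issuing from $w_0$ to stay inside, or be screened by, $S$ --- the continuum could be a thin spiral or a highly oscillating arc, leaving the would-be barrier discontinuous inside the domain. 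Coping with that arbitrary geometry is exactly what Beurling's projection theorem accomplishes, so the proposed alternative collapses back into the same argument rather than providing a more elementary one.
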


If we apply this with $S=\G^k$, $k=0,\ldots,k_0$ and
use that $g_{\O_m}(z)\le g_{\O_m^k}(z)$ for each $k$
(where, of course,
$\O_m^k$ is the unbounded component of $\ov\C\setm \G_m^k$),
then we can conclude the first inequality in {\bf (iii)}.
In this case (i.e. when $J$ is of Type I), the second inequality in
{\bf (iii)} is trivial, since, by the construction,
 $g_{\O_m}$ is identically 0 on $\G$.\endproof

\subsection{Proof of Proposition \ref{propt2}} For an $m$
let $J_{1,m}$ resp. $J_{2,m}$ be
the two open subarcs of $J$ of diameter $1/m$ that lie outside $\D_\d(z_0)$, but which
have one endpoint in $\ov{\D_\d(z_0)}$ (see Figure \ref{fig2}) (for large $m$ these exist).
\begin{figure}
\centering
\includegraphics[scale=0.5]{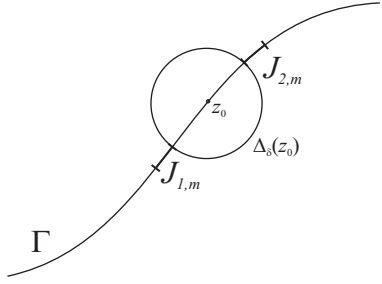}
\caption{\protect\label{fig2}The arcs $J_{1,m}$ and $J_{2,m}$}
\end{figure}

Remove now $J_{1,m}$ and $J_{2,m}$ from $\G$. Since we are in the Type II situation,
 after this removal
the unbounded component of the complement of $\G^0\setm ({J_{1,m}\cup J_{2,m}})$
is $\O\cup J_{1,m}\cup J_{2,m}$, and  $\G^0\setm (J_{1,m}\cup J_{2,m})$ splits into three
connected components, one of them being $J\cap \ov{\D_\d(z_0)}$. Let
$\G^{0,1},\G^{0,2}$ be the other two components of
$\G^0\setm ({J_{1,m}\cup J_{2,m}})$. As $m\to\i$ we have
$\c(\ov\C\setm (\O \cup {J_{1,m}\cup J_{2,m}}))\to \c(\ov\C\setm \O)$,
and since now the domains $\O \cup {J_{1,m}\cup J_{2,m}}$ are shrinking,
we can conclude from Harnack's theorem as before that
$g_{\O}(z)-g_{\O_m}(z)\to 0$ locally uniformly on compact subsets of
$\O$. This implies again that if ${\bf n}_\pm$ are the
two normals to $\G$ at $z_0$ (note that now both point inside $\O$),
then
\[\frac{\partial g_{\O \cup {J_{1,m}\cup J_{2,m}}}(z_0)}{\partial {\bf n\pm}}\to
\frac{\partial g_{\O}(z_0)}{\partial {\bf n}}\]
as $m\to\i$. Since now  (see \cite[II.(4.1)]{Nevanlinna} or \cite[Theorem IV.2.3]{SaffTotik} and \cite[(I.4.8)]{SaffTotik})
\be \o_\G(z_0)=\frac1{2\pi} \left ( \frac{\partial g_{\O}(z_0)}{\partial {\bf n}_+}
+ \frac{\partial g_{\O}(z_0)}{\partial {\bf n}_-}\right),\label{norm2}\ee
we can conclude again
that
\be 0\le \o_{\G \setm (J_{1,m}\cup J_{2,m})}(z_0)- \o_\G(z_0)< \e_m
\label{ryt}\ee
with some $\e_m>0$ that tends to 0 as $m\to\i$. By selecting a
somewhat larger $\e_m$ we may also assume
\be g_{\O\cup{J_{1,m}\cup J_{2,m}}}(z)<\e_m,\qquad z\in J_{1,m}\cup J_{2,m}\label{asd*}\ee
(apply Lemma \ref{unif} to $S=\G\cap\ov{\D_\d(z_0)}$ and
use that
$g_{\O\cup{J_{1,m}\cup J_{2,m}}}(z)\le g_{\ov\C\setm (\G\cap\ov{\D_\d(z_0)})}(z)$).

For the continua $\G^{0,1},\G^{0,2},\G_1,\G_2,\ldots,\G_{k_0}$
and for a small $0<\t<1/m$
select $C^2$-smooth Jordan curves $\g^{0,1},\g^{0,2},\g_1,\g_2,\ldots,\g_{k_0}$
that lie in $\O \cup {J_{1,m}\cup J_{2,m}}$ and are of distance $<\t$ from
the corresponding continuum. Let $\G_{m,\t}$ be the union
of $J\cap \ov{\D_\d(z_0)}$ and of these last chosen Jordan curves.
Then $\G_{m,\t}$ consists (for small $\t$) of $k_0+2$ Jordan curves and one
Jordan arc (namely $J\cap \ov{\D_\d(z_0)}$), all of them $C^2$-smooth.
According to the proof of Proposition \ref{propt1} we have
\[\o_{\G_{m,\t}}(z_0)\to \o_{\G\setm (J_{1,m}\cup J_{2,m})}(z_0)\]
as $\t\to 0$, therefore, for sufficiently small $\t$, we have (see (\ref{ryt}))
\[-\e_m<\o_{\G_{m,\t}}(z_0)- \o_\G(z_0)< \e_m.\]

Thus, if $\t$ is sufficiently small, we have properties {\bf (i), (ii)}
and {\bf (iv)} in the proposition for $\G_m=\G_{m,\t}$. The first inequality in
{\bf (iii)} follows exactly as at the end of the proof of Proposition
\ref{propt1}. Finally, the second inequality in {\bf (iii)} follows from
(\ref{asd*}) because
\[g_{\O_{m,\t}}(z)\le g_{\O\cup{J_{1,m}\cup J_{2,m}}}(z),\]
(where $\O_{m,\t}$ is the unbounded component of $\ov\C\setm \G_{m,\t}$)
and $g_{\O_{m,\t}}(z)=0$ if $z\in \G$ unless $z\in J_{1,m}\cup J_{2,m}$.

These show that for sufficiently small $\t$ we can
select $\G_m$ in Proposition \ref{propt2}
as $\G_{m,\t}$.\endproof

\sect{Proof of Theorem \ref{thmainarc}}
Let $\G$ be  as in Theorem \ref{thmainarc}, and let $\G=\cup_{k=0}^{k_0} \G_k$ be
the connected components of $\G$, $\G_0$ being the one that contains
$z_0$. We may assume that
$z_0=0$. Set
\[\tilde \G=\{z\sep z^2\in \G\}, \qquad \tilde \G_k=\{z\sep z^2\in \G_k\}.\]
Every $\tilde \G_k$ is the union of two disjoint continua: $\tilde \G_k=\G_k^{+}\cup \tilde \G_k^{-}$,
where $\tilde \G_k^-=-\tilde \G_k^+$. Set $\tilde \G^\pm=\cup_k \tilde \G_k^{\pm}$.
All the $\tilde \G_k^{\pm}$ are disjoint, except when $k=0$:
then 0 is a common point of $\G_0^{\pm}$, but except for that point,
$\tilde \G_0^{+}$ and $\tilde \G_0^{-}$ are again disjoint.
In general, we shall use the notation $\tilde H$ for the set of points
$z$ for which $z^2$ belongs to $H$, and if $H$ is a continuum, then represent
$\tilde H$ as the union of two continua $\tilde H^+\cup \tilde H^-$,
where $\tilde H^-=-\tilde H^+$, and $\tilde H^-$ and $\tilde H^+$
are disjoint except perhaps for the point 0 if 0 belongs to $H$.

Now $\tilde \G_0^+\cup \tilde \G_0^-$ is connected, and if $J$ is the
$C^2$-smooth arc of $\G$ with
one endpoint at $z_0=0$, then $\tilde J$ is a $C^2$-smooth arc that lies on the
outer boundary of $\tilde \G$, and  $\tilde J$  contains 0 in its
(one-dimensional) interior.
Thus, $\tilde \G$ and $z_0=0$ satisfy the assumptions in Theorem \ref{thmain}.

For a measure $\mu$ defined on $\G$ let $\tilde \mu$ be the measure
$d\tilde \mu(z)=\frac12 d\mu(z^2)$, i.e. if, say, $E\subset \tilde \G^+$
is a Borel set
and $E^2=\{z^2\sep z\in E\}$, then
\[\tilde \mu(E)=\frac12 \mu(E^2),\]
and a similar formula holds for $E\subset \G^-$. So $\tilde \mu$ is
an even measure, which has the same total mass as $\mu$ has.

Let $\nu_\G$ be the equilibrium measure of $\G$. We claim that
$\nu_{\tilde \G}=\widetilde {\nu_\G}.$ Indeed, for any $z\in \tilde \G$
we have
\bean \int \log|z-t|d\widetilde {\nu_\G}(t)&=&\int_{\tilde \G^+}(\log |z-t|+\log|z+t|)
d\widetilde {\nu_\G}(t)\\
&=&\frac12 \int_\G \log|z^2-t^2|d\nu_\G(t^2)\\
&=&\frac12\int \log|z^2-u|d\nu_\G(u)={\rm const}\eean
because the equilibrium potential of $\nu_\G$ is constant
on $\G$  by Frostman's theorem (see \cite[Theorem 3.3.4]{Ransford}),
and $z^2\in \G$. Since the equilibrium measure $\nu_{\tilde \G}$
is characterized (among all probability measures on $\tilde \G$)
by the fact that its logarithmic potential
is constant on
the given set, we can conclude that $\widetilde {\nu_\G}$ is, indeed,
the equilibrium measure of $\tilde \G$ (here we use that all
the sets which we are considering are the unions of finitely
many continua, hence the equilibrium potentials for them
are continuous everywhere).

Let $\g(t)$ be a parametrization of $\tilde J^+$ with $\g(0)=0$. Then $\g(t)^2$
is a parametrization of $J$, and the two corresponding arc measures are
$|\g'(t)|dt$ and $|(\g(t)^2)'|dt=2|\g(t)||\g'(t)|dt$, resp.
Therefore, since the $\nu_{\tilde \G}$-measure
of an arc $\{\g(t)\sep t_1\le t\le t_2\}$ is the same
as half of the $\nu_\G$-measure of the
arc $\{\g(t)^2\sep t_1\le t\le t_2\}$, we have
\[\int_{t_1}^{t_2} \o_{\tilde \G}(\g(t))|\g'(t)|dt
= \frac12\int_{t_1}^{t_2} \o_\G(\g(t)^2)2|\g(t)||\g'(t)|dt,\]
from which
\[\o_{\tilde \G}(\g(t))=\o_\G(\g(t)^2)|\g(t)|, \qquad t\in \tilde J^+,\]
follows (recall, that on both sides the $\o$ is the
equilibrium density with respect to the corresponding
arc measure). A similar formula
holds on $\tilde J^-$. But $\o_{\tilde \G}(z)$ is continuous
and positive at $0$ (see e.g. \cite[Proposition 2.2]{Totikarc}), therefore the preceding
formula shows that $\o_\G(z)$ behaves
around 0 as $\o_{\tilde \G}(0)/\sqrt {|z|}$, and
we have (see (\ref{mgdef}) for the definition of $M(\G,0)$)
\be M(\G,0)=\lim_{z\to0} \sqrt{|z|}\o_\G(z)=\o_{\tilde \G}(0).\label{mgo}\ee

Now the same argument that was used in the proof of Proposition
\ref{model_2} (see in particular (\ref{mod01})) shows that
\be \l_{2n}(\tilde \mu,0)=\l_n(\mu,0).\label{ketl}\ee
$\mu$ was assumed to be of the form $w(z)|z|^\a ds_J(z)$
on $J$, hence, as before,
\[\int_{t_1}^{t_2} d\tilde \mu(t)
= \frac12\int_{t_1}^{t_2} w(\g(t)^2)|\g(t)^2|^\a2|\g(t)||\g'(t)|dt,\]
and since here $|\g'(t)|dt$ is the arc measure on $\tilde J^+$,
we can conclude that on $\tilde J^+$ the measure
$\tilde \mu$ has the form
$d\tilde \mu(z)=w(z^2)|z|^{2\a+1}ds_{\tilde J}(z)$, and the same representation
holds on $\tilde J^{-}$. Therefore, Theorem \ref{thmain}
can be applied to the set $\tilde \G$, to the measure $\tilde \mu$
and to the point $z_0=0$, the only change is that now $\a$ has
to be replaced by $2\a+1$ when dealing with the measure
$\tilde \mu$. Now we obtain from
(\ref{ketl})
\[\lim_{n\to\i} (2n)^{2\a+2}\l_{2n}(\tilde \mu,0)
=\lim_{n\to\i} (2n)^{2\a+2}\l_n(\mu,0),\]
and since, according to Theorem \ref{thmain},  the limit on the left
is
\[2^{2\alpha + 2} \Gamma\Big( \frac{2\alpha + 2}{2} \Big) \Gamma\Big( \frac{2\alpha + 4}{2} \Big)
\frac{w(0)}{(\pi \o_{\tilde \G}(0))^{2\a+2}},\]
we obtain
\[\lim_{n\to\i} n^{2\a+2}\l_n(\mu,0)=\Gamma\Big( \alpha + 1 \Big) \Gamma\Big( \alpha + 2 \Big)
\frac{w(0)}{(\pi \o_{\tilde \G}(0))^{2\a+2}},\]
which, in view of (\ref{mgo}), is the same as
(\ref{arceq}) in Theorem \ref{thmainarc}.\endproof

Vilmos Totik

Bolyai Institute

MTA-SZTE Analysis and Stochastics Research Group

University of Szeged

Szeged

Aradi v. tere 1, 6720, Hungary

\smallskip
and
\smallskip

Department of Mathematics and Statistics

University of South Florida

4202 E. Fowler Ave, CMC342

Tampa, FL 33620-5700, USA
\medskip

{\it totik@mail.usf.edu}

\bigskip

Tivadar Danka

Potential Analysis Research Group, ERC Advanced Grant No. 267055

Bolyai Institute

University of Szeged

Szeged

Aradi v. tere 1, 6720, Hungary

{\it tivadar.danka@math.u-szeged.hu}
\end{document}